\documentclass[10pt,reqno]{amsart} 
\usepackage{amsmath} 
\usepackage{amssymb} 
\usepackage{amsthm} 
\usepackage{mathtools}
\usepackage{comment}
\usepackage{tikz}
\usepackage[colorlinks]{hyperref}
\usepackage{float}
\usepackage{enumitem}

\usepackage{tablefootnote}
\usepackage{array}

\setlength\parindent{0pt} 

\usepackage[normalem]{ulem}

\usepackage{tikz}
\usetikzlibrary{arrows.meta, positioning}

\numberwithin{equation}{section}

\newcommand{\sub}{\subseteq}

\newcommand{\R}{\mathbb{R}}

\newcommand{\N}{\mathbb{N}}
\newcommand{\eps}{\varepsilon}

\numberwithin{chap}{section}
\newtheorem{thm}{Theorem}
\numberwithin{thm}{section}
\newtheorem{conj}[thm]{Conjecture}
\newtheorem{prop}[thm]{Proposition}
\newtheorem{defn}[thm]{Definition}
\newtheorem{lem}[thm]{Lemma}

\newtheorem{cor}[thm]{Corollary}

\DeclarePairedDelimiter{\norm}{\lVert}{\rVert}

\makeatletter
\let\oldnorm\norm
\def\norm{\@ifstar{\oldnorm}{\oldnorm*}}

\makeatother

\begin{document}

\pagestyle{myheadings} \thispagestyle{empty} \markright{}
\title{Decoupling for degenerate hypersurfaces}

\author{Jianhui Li and Tongou Yang}
\address[Jianhui Li]{Department of Mathematics, Northwestern University\\
Evanston, IL 60208, United States
}
\email{jianhui.li@northwestern.edu}

\address[Tongou Yang]{Department of Mathematics, University of California\\
Los Angeles, CA 90095, United States}
\email{tongouyang@math.ucla.edu}

\date{}

\begin{abstract}
We utilise the two principles of decoupling introduced in \cite{LiYang2024} to prove the following conditional result: assuming uniform decoupling for graphs of polynomials in all dimensions with identically zero Gaussian curvature, we can prove decoupling for all smooth hypersurfaces in all dimensions. Moreover, we are able to prove (unconditional) decoupling for all smooth hypersurfaces in $\R^4$ and graphs of homogeneous polynomials in $\R^5$.
\end{abstract}

\maketitle

\section{Introduction}
Fourier decoupling inequalities, first introduced by Wolff in \cite{Wolff2000}, are well established for hypersurfaces with non-zero principal curvatures; see \cite{BD2015, BD2017} and also the textbook \cite{Demeter2020}. As with many other problems in harmonic analysis, the absence of curvature assumptions makes the analysis significantly more subtle.

Building on previous efforts \cite{BDK2019, LiYang, LiYang2023, LiYang2024, LiYang2025, Kemp1, Kemp2, Kemp2024} to develop decoupling inequalities for degenerate hypersurfaces, one of the main results in this paper (see Corollary \ref{cor:conj_holds}) is to reduce the decoupling problem for all smooth hypersurfaces in $\R^{n+1}$ to proving uniform decoupling inequalities for graphs of polynomials $\phi : \R^n \to \R$ with identically zero Gaussian curvature, i.e., $\det D^2\phi \equiv 0$.

The classification of such degenerate polynomials is a challenging algebraic problem in general. However, in dimension $n=3$, a complete classification is available thanks to \cite{SingularHessian2004}. According to their result, any such function $\phi$ either depends on at most two variables or takes a special form that we refer to as \textit{near-affine}. See Lemma \ref{lem:zero_Hessian_paper} below for a precise statement. We then establish decoupling inequalities for near-affine hypersurfaces by using the techniques developed in \cite{LiYang2024}. This leads to an unconditional decoupling result for smooth hypersurfaces in $\R^4$ (see Theorem \ref{thm:almost_affine}), where our method is extended to all dimensions.

Furthermore, applying a similar argument to \cite[Section 12]{LiYang2025}, we are able to prove a uniform decoupling result for homogeneous polynomials of 4 variables (see Corollary \ref{cor:H(4)}).

\subsection{Formulation of decoupling}
We first formulate the decoupling inequalities concerned with this paper. We also recommend that the reader refer to \cite{LiYang2024} for more general formulations of decoupling.

\begin{defn}\label{defn:decoupling}
    Given a compact subset $S\sub \R^n$ and a finite collection $\mathcal R$ of boundedly overlapping parallelograms\footnote{See Section \ref{sec:notation} for the precise definitions.} $R\sub \R^n$. For Lebesgue exponents $p,q\in [2,\infty]$, we define the $\ell^q(L^p)$-decoupling constant $\mathrm{Dec}(S,\mathcal R,p,q)$ to be the smallest constant $\mathrm{Dec}$ such that
    \begin{equation}\label{eqn:defn_decoupling}
        \norm{\sum_{R}f_R}_{L^p(\R^n)} \leq \mathrm{Dec}\,\, (\#\mathcal R)^{\frac{1}{2}-\frac{1}{q}}\norm{\norm{f_R}_{L^p(\R^n)}}_{\ell^q(R\in \mathcal R)}
    \end{equation}
    for all smooth test function $f_R$ Fourier supported on $R\cap S$.
    
    Given a subset $S\sub \R^n$, we say $S$ can be $\ell^q(L^p)$-decoupled into the parallelograms $R\in \mathcal R$ at the cost of $K$, if $S\sub \cup \mathcal R$ and $\mathrm{Dec}(S,\mathcal R,p,q)\le K$.
\end{defn}

Since we often deal with decoupling for graphs of functions, we introduce the following notation.
\begin{defn}\label{defn:graphical_decoupling}
    Let $\Omega\sub \R^k$ be an open set, $\phi:\Omega\to \R$ be $C^2$. We say that a parallelogram $\Omega_0\sub \Omega$ can be $\phi$-$\ell^q(L^p)$ decoupled into parallelograms $\omega$ at scale $\delta$ (at cost $C$), if $N_\delta^\phi(\Omega_0)$ can be $\ell^q(L^p)$ decoupled into parallelograms equivalent to $N_\delta^\phi(\omega)$ (at cost $C$). 
\end{defn}

We now state the following variant of the celebrated decoupling theorem of Bourgain and Demeter \cite{BD2015,BD2017}, which serves as the most fundamental ingredient in this article. See also \cite[Proposition 5.21]{LiYang2024}.
\begin{thm}[Bourgain-Demeter]\label{thm:Bourgain_Demeter}
Let $K\ge 1$ and $\phi(x):[-1,1]^{n-1}\to \R$ be a $C^{3}$ function with $\inf |\det D^2 \phi|\ge K^{-1}$. For every $0<\delta<1$, denote by $\mathcal T_\delta$ a tiling of $[-1,1]^{n-1}$ by cubes $T$ of side length $\delta^{1/2}$.

Then for $2\le p\le \frac{2(n+1)}{n-1}$, $[-1,1]^{n-1}$ can be $\phi$-$\ell^p(L^p)$ decoupled into $\mathcal T_\delta$ at the cost of $C_\eps K^{C_n\eps}\delta^{-\eps}$ for every $\eps>0$, in the sense of Definition \ref{defn:decoupling}, where the constant $C_n$ depends only on $n$, and $C_\eps$ depends only on $\eps,p,\norm{\phi}_{C^3}$. Moreover, this can be upgraded to a $\ell^2(L^p)$ decoupling if $D^2\phi$ is positive (or negative) semidefininte.
\end{thm}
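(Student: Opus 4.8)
The plan is to deduce this from the classical Bourgain--Demeter decoupling theorem \cite{BD2015,BD2017} for non-degenerate (elliptic or hyperbolic) quadratic hypersurfaces and their $C^3$-perturbations, for which the decoupling constant into $\delta^{1/2}$-caps is $C_\eps\delta^{-\eps}$ with a constant that does \emph{not} involve $K$; see e.g.\ \cite{Demeter2020}. Essentially all the work is in an affine renormalisation argument that converts $\phi$, locally, into such a perturbation, while tracking the dependence on $K$.

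First I would fix a small scale $\rho = \rho(n,\norm{\phi}_{C^3},K)$ and partition $[-1,1]^{n-1}$ into cubes $Q$ of side $\rho$. On each $Q$, with centre $c$, I write $H := D^2\phi(c)$; the hypothesis $\inf|\det D^2\phi|\ge K^{-1}$ together with $\norm{D^2\phi}_\infty\le\norm{\phi}_{C^3}$ forces every eigenvalue $\lambda_i$ of $H$ to satisfy $|\lambda_i|\in[c_n\norm{\phi}_{C^3}^{-(n-2)}K^{-1},\ \norm{\phi}_{C^3}]$. Let $O$ be orthogonal with $OHO^\top=\mathrm{diag}(\lambda_i)$ and set $T := O^\top\mathrm{diag}(|\lambda_i|^{-1/2})$, so that $T^\top H T=\Sigma:=\mathrm{diag}(\mathrm{sgn}\,\lambda_i)$. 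Taylor expanding $\phi$ at $c$, composing $\phi|_Q$ with $\zeta\mapsto c+\rho T\zeta$, and rescaling the last coordinate by $\rho^{-2}$ produces, on a bounded parallelepiped, a function that agrees with the model quadratic $\tfrac12\langle\Sigma\zeta,\zeta\rangle$ up to a $C^3$-error of size $\lesssim \norm{T}^3\rho\norm{\phi}_{C^3}\sim_\phi K^{3/2}\rho$, the loss of $K$-powers coming precisely from $\norm{T}^3\sim(\min_i|\lambda_i|)^{-3/2}\sim_\phi K^{3/2}$. Choosing $\rho\sim_\phi K^{-3/2}$ makes this error at most a fixed small constant, so Bourgain--Demeter applies on each renormalised $Q$ with a constant depending only on $n,p,\eps$ and $\norm{\phi}_{C^3}$, not on $K$; since the affine maps involved send parallelograms to parallelograms and $N_\delta^\phi(Q)$ is thereby equivalent to the $\delta\rho^{-2}$-neighbourhood of the model surface, this yields a $\phi$-decoupling of each $Q$ at cost $C_\eps\delta^{-\eps}$. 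Moreover $\Sigma$ is definite precisely when $D^2\phi$ is semidefinite, in which case Bourgain--Demeter delivers the stronger $\ell^2(L^p)$ inequality.

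The main obstacle is to recombine the $\sim_\phi K^{C_n}$ cubes $Q$ without paying a power of $K$. Two points must be handled with care. The caps produced by the renormalisation, pushed back to the original coordinates, are not $\delta^{1/2}$-cubes but parallelepipeds of dimensions $\delta^{1/2}|\lambda_i|^{-1/2}\gtrsim_\phi\delta^{1/2}$; passing from these to the requested $\delta^{1/2}$-cubes requires subdividing into up to $|\det H|^{-1/2}\le K^{1/2}$ pieces, whose naive (flat) cost is a fixed power of $K$. Likewise, decoupling $[-1,1]^{n-1}$ into the cubes $Q$ themselves cannot be done by flat decoupling, since $\#\{Q\}\sim_\phi K^{C_n(n-1)}$. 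The remedy is to organise everything as an induction on scales: one shows that $\phi$-decoupling $[-1,1]^{n-1}$ into $\delta^{1/2}$-cubes is controlled by $\phi$-decoupling into $\rho$-cubes at the coarser thickness $\rho^2$ (here the identity $(\delta/\rho^2)^{1/2}=\delta^{1/2}/\rho$, matching the renormalised cap scale with the requested one, is what makes the bookkeeping close), times the $K$-free decoupling gained on each renormalised $Q$; iterating this, together with the submultiplicativity of decoupling constants and a careful accounting of multiplicities, is what turns the naive polynomial-in-$K$ loss into $C_\eps K^{C_n\eps}\delta^{-\eps}$ with $C_n$ depending only on $n$. I expect this control of the $K$-dependence, rather than the renormalisation itself, to be the delicate part. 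Throughout, the $\ell^2(L^p)$ structure in the semidefinite case is preserved, since affine changes of variables, grouping of caps, and the Bourgain--Demeter input are all compatible with $\ell^2$ summation.
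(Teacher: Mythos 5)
You should first note that the paper does not actually prove this theorem: it is imported as a black box, with the reader referred to \cite{BD2015,BD2017} and to Proposition 5.21 of \cite{LiYang2024}. So there is no internal proof to compare yours against, and I can only assess your argument on its own terms. Your renormalisation is the standard route, and the quantitative bookkeeping up to the application of classical Bourgain--Demeter on each cube $Q$ is essentially correct (the eigenvalue bounds, the choice $\rho\sim_\phi K^{-3/2}$, and the $C^3$-error estimate all check out).

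The genuine gap is exactly at the point you flag as delicate, and the remedy you propose does not close it. Both $K$-losses you identify --- the partition of $[-1,1]^{n-1}$ into $\sim_\phi K^{3(n-1)/2}$ cubes $Q$, and the subdivision of each anisotropic cap of dimensions $\delta^{1/2}|\lambda_i|^{-1/2}$ into $\delta^{1/2}$-cubes --- are \emph{flat} steps, and in the $\ell^p(L^p)$ normalisation of Definition \ref{defn:decoupling} with $p>2$ each costs a fixed positive power of $K$ that does not shrink with $\eps$. Rearranging them into an induction on scales does not help: the identity $(\delta/\rho^2)^{1/2}=\delta^{1/2}/\rho$ makes the cap scales match, but every pass through the iteration incurs the same fixed $K^{c_n}$ from the cap-reshaping step, so nesting scales multiplies rather than attenuates these losses; and the top-level decoupling into $\rho$-cubes at thickness $\rho^2$ is a single fixed scale for which submultiplicativity yields only a vacuous self-improvement of the form $A\le C_\eps A$. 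Obtaining $K^{C_n\eps}$ requires a further idea --- for instance, performing both offending steps using curvature rather than flatly: on each anisotropic cap $\theta$ the rescaled function $\delta^{-1}(\phi\circ\lambda_\theta-\text{affine})$ is again a bounded perturbation of a unimodular nondegenerate quadratic, so the reshaping can be done by another application of Bourgain--Demeter at scales $\gtrsim K^{-1}$ at cost $C_\eps K^{O(\eps)}$ --- or else one simply quotes the quantitative statement from \cite{LiYang2024}. A secondary issue: after the change of variables $\zeta\mapsto c+\rho T\zeta$ the renormalised domain is an eccentric parallelepiped rather than $[-1,1]^{n-1}$, and for $\ell^p(L^p)$ with $p>2$ decoupling over a sub-parallelogram is not implied by decoupling over the cube (this is precisely the difficulty Section \ref{sec:sub_parallelogram} is devoted to), so this must be addressed. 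Note finally that your sketch does deliver the weaker bound $C_\eps K^{C_n}\delta^{-\eps}$ with $C_n$ independent of $\eps$, which is all the paper ever uses (the nondegenerate decoupling input of Theorem \ref{thm:degeneracy_locating_principle} only requires cost $K^{C}\sigma^{-\eps}$); the stronger $K^{C_n\eps}$ as stated is not reached by your argument.
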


\subsection{Decoupling for polynomials}
In this subsection, we introduce the main decoupling constants that will be studied in this paper.

Fix $n,d\ge 1$. We denote by $\mathcal P_{n,d}$ the collection of all $n$-variate polynomials $\phi$ of degree at most $d$, such that $\max_{[-1,1]^n}|\phi|\le 1$. Equivalently, the coefficients of $\phi$ are $\lesssim_{n,d} 1$.

For a parallelogram $R\sub \R^n$, its smallest dimension is referred to as {\it width}. See Section \ref{sec:notation} for the precise definitions.

\begin{defn}\label{defn:graphical_polynomial_decoupling}
    Let $n,d\ge 1$. We say that uniform decoupling holds for $\mathcal P_{n,d}$, or $U(n)$ holds, if for every $\phi\in \mathcal P_{n,d}$, $\delta\ll_{n,d} 1$ and $\eps\in (0,1]$, there exists a covering $\mathcal R_\delta=\mathcal R_\delta(\phi,\eps)$ of $[-1,1]^{n}$ by 
    parallelograms $R_\delta\sub [-2,2]^n$, such that the following holds:
    \begin{enumerate}
        \item \label{item:U_01} Each $R\in\mathcal R_\delta$ has width at least $\delta$, and $\mathcal R_\delta$ has bounded overlap. In particular, $\#\mathcal R_\delta\lesssim \delta^{-n}$.
        \item Every $R_\delta\in \mathcal R_\delta$ in $(\phi,\delta)$-flat.
        \item For every $2\le p\le \frac{2(n+2)}{n}$, $[-1,1]^{n}$ can be $\phi$-$\ell^p(L^p)$ decoupled into $\mathcal R_\delta$ at scale $\delta$ at cost $\lesssim \delta^{-\eps}$ as in Definition \ref{defn:graphical_decoupling}.
    \end{enumerate}    
    All implicit constants and the overlap function in this definition are allowed to depend on $n,d,\eps,p$,  but not on $\delta$ or the coefficients of $\phi$.
\end{defn}

\begin{defn}\label{defn:graphical_polynomial_decoupling_zero}
    Let $n,d\ge 1$. We say that (uniform) degenerate decoupling holds for $\mathcal P_{n,d}$, or $Z(n)$ holds, if the same statement of Definition \ref{defn:graphical_polynomial_decoupling} holds, except that we additionally have $\det D^2 \phi\equiv 0$.
\end{defn}

\begin{defn}\label{defn:sublevel_set_decoupling}
    Let $n,d\ge 1$. We say that (uniform) sublevel set decoupling holds for $\mathcal P_{n,d}$, or $S(n)$ holds, if for every $\phi\in \mathcal P_{n,d}$, $\delta\ll_{n,d} 1$ and $\eps\in (0,1]$, there exists a covering $\mathcal S_\delta$ of the sublevel set $\{x\in [-1,1]^n:|\phi(x)|<\delta\}$ by parallelograms $S_\delta\sub [-2,2]^n$, such that the following holds:
    \begin{enumerate}
        \item Each $S_\delta\in\mathcal S_\delta$ width at least $\delta$, and $\mathcal S_\delta$ has bounded overlap. In particular, $\#\mathcal S_\delta\lesssim \delta^{-n}$.
        \item $|\phi(x)|\lesssim \delta$ for every $x\in S_\delta\in \mathcal S_\delta$.
        \item For every $2\le p\le \frac{2(n+1)}{n-1}$, the sublevel set $\{x\in [-1,1]^n:|\phi(x)|\le\delta\}$ can be $\ell^p(L^p)$ decoupled into $\mathcal S_\delta$ at cost $\lesssim \delta^{-\eps}$ as in Definition \ref{defn:decoupling}.
    \end{enumerate}
    All implicit constants in this definition are allowed to depend on $n,d,\eps,p$, but not on $\delta$ or the coefficients of $\phi$.
\end{defn}
From the definitions, we see that $U(1)$ holds by \cite{Yang2} (where we can even show that $R_\delta$ has length at least $\delta^{1/2}$), and $U(2)$ holds by \cite[Theorem 5.22]{LiYang2024}. Hence $Z(1)$ and $Z(2)$ hold. Also, $S(1)$ follows from the fundamental theorem of algebra (together with a simple trick called $\delta$-thickening, see the proof of Proposition \ref{prop:delta_thickening} below), and $S(2)$ follows from \cite[Theorem 5.23]{LiYang2024}. The general case remains a conjecture.
\begin{conj}\label{conj:decoupling}
    The statements $U(n)$, $Z(n)$ and $S(n)$ hold for all $n\ge 1$.
\end{conj}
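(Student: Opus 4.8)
Since Conjecture~\ref{conj:decoupling} is open, what follows is a line of attack rather than a complete argument. The natural plan is to establish $U(n)$, $Z(n)$ and $S(n)$ by a simultaneous induction on the dimension $n$, the base cases $U(1)$, $U(2)$ (hence $Z(1)$, $Z(2)$) and $S(1)$, $S(2)$ being already recorded above. Within a fixed dimension the three statements are nearly interchangeable: $U(n)\Rightarrow S(n)$ by the $\delta$-thickening trick (cf.\ Proposition~\ref{prop:delta_thickening}), since $\{x\in[-1,1]^n:|\phi(x)|<\delta\}$ is comparable to the vertical projection of a $\delta$-neighbourhood of the graph of a slightly perturbed $\phi$ intersected with a horizontal slab; and $Z(n)$, together with $U$, $Z$, $S$ in dimensions $<n$ and a bootstrapped use of sublevel-set decoupling for the Hessian determinant, should yield $U(n)$. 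Thus the real content is $Z(n)$.

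For the reduction of $U(n)$ to degenerate decoupling, given $\phi\in\mathcal P_{n,d}$ one considers $\psi=\det D^2\phi$, a polynomial of degree at most $n(d-2)$, and splits $[-1,1]^n$ into the non-degenerate region $\{|\psi|\gtrsim\rho\}$ and the near-degenerate region $\{|\psi|<\rho\}$ for a threshold $\rho$ that is a small power of $\delta$. On the non-degenerate region one rescales into cubes on which $|\psi|\sim\rho$ and applies the Bourgain--Demeter theorem (Theorem~\ref{thm:Bourgain_Demeter}), at a cost polynomial in $\rho^{-1}$. On the near-degenerate region one uses sublevel-set decoupling for $\psi$ to pass to $O_\delta(1)$ anisotropic parallelograms on which $|\psi|$ is small; affinely rescaling $\phi$ to each of them produces either a genuinely $(n-1)$-variate problem (closed by $U(n-1)$ and a trivial lift), or a polynomial with identically vanishing Hessian determinant (closed by $Z(n)$), or a polynomial on which $\psi$ now has size a fixed power of $\rho$, so that the induction on the degree of degeneracy terminates. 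Reassembling these pieces while maintaining the bounded-overlap and $(\phi,\delta)$-flatness requirements of Definition~\ref{defn:graphical_polynomial_decoupling} is bookkeeping-intensive but follows the template of \cite{LiYang2024}.

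The main obstacle is $Z(n)$ in dimensions $n\ge4$. For $n=3$ one would invoke the classification of \cite{SingularHessian2004}: after a linear change of variables such a $\phi$ either depends on at most two variables, in which case $U(2)$ plus a trivial lift applies, or is \emph{near-affine} --- a ruled structure whose decoupling follows from the two decoupling principles of \cite{LiYang2024}, namely decouple the two-dimensional base of the ruling and then exploit flatness along the rulings. For $n\ge4$ no comparable classification of polynomials with $\det D^2\phi\equiv0$ is available, and producing one appears to be a genuinely hard problem in real algebraic geometry: the Hessian may drop rank along complicated subvarieties, and the hypersurface may be foliated by affine subspaces of varying dimension. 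Even granted such a structural description, the decoupling analysis would require a secondary induction on $\rank D^2\phi$, combined with careful control of the nested families of anisotropic parallelograms produced at each stage so as to preserve the quantitative hypotheses in Definitions~\ref{defn:graphical_polynomial_decoupling}--\ref{defn:sublevel_set_decoupling}. I therefore expect essentially all the difficulty to be concentrated in this classification-and-induction step; by contrast, the inter-reductions among $U$, $Z$ and $S$ in a fixed dimension are routine consequences of the machinery of \cite{LiYang2024}.
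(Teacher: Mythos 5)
You correctly identify that this statement is a conjecture that the paper does not prove, and your reduction scheme is essentially the paper's: the inter-implications among $U$, $Z$, $S$ (Lemma~\ref{lem:trivial_implications}, Theorems~\ref{thm:main_U_imply_S} and~\ref{thm:main_SZ_imply_U}), the degeneracy locating principle splitting into nondegenerate, near-degenerate and totally degenerate regions, the treatment of $Z(3)$ via the classification of \cite{SingularHessian2004} and the near-affine case, and the identification of $Z(n)$ for $n\ge 4$ as the genuine obstruction (cf.\ Corollary~\ref{cor:conj_holds}). One caveat: you describe the passage from uniform graph decoupling to sublevel-set decoupling as a routine $\delta$-thickening, but in the paper this is Theorem~\ref{thm:main_U_imply_S}, one of the two main theorems, proved in Section~\ref{sec:U_imply_S} by an induction on degree involving pseudo-polynomial decoupling, a dyadic decomposition of the gradient, a rotation lemma, and a Pramanik--Seeger type iteration; the $\delta$-thickening trick alone only handles $\tilde S(1)$. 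The difficulty of the conjecture is therefore not quite as concentrated in $Z(n)$ alone as your sketch suggests, although that is indeed where the open problem now lives.
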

By the Taylor expansion in \cite[Section 2]{LiYang2023}, we have the following result.
\begin{prop}
    If $U(n)$ holds, then decoupling holds for any smooth functions $\phi$ of $n$ variables. More precisely, the same statement of Definition \ref{defn:graphical_polynomial_decoupling} holds for $\phi$, except that the implicit constants are allowed to depend on $\eps,p$ and the $C^{[1/\eps]}$ norm of $\phi$.
\end{prop}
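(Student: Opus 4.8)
The plan is to run the standard localisation-and-rescaling reduction from \cite[Section~2]{LiYang2023}: on a cube of side length a suitable small power of $\delta$, a smooth $\phi$ agrees with its Taylor polynomial of a bounded degree $d=d(\eps)$ up to an error $\lesssim\delta$, so the portion of $N_\delta^\phi$ lying over such a cube is, after an affine change of variables, a $\delta$-neighbourhood of the graph of a member of $\mathcal P_{n,d}$, to which $U(n)$ applies. The only thing to watch is that the accumulated flat-decoupling losses stay below $\delta^{-\eps}$, which is what pins down $d\sim 1/\eps$ and hence the number of derivatives of $\phi$ that enter the constants.

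\textbf{Steps.} Fix $\eps\in(0,1]$ and $2\le p\le\frac{2(n+2)}{n}$. Choose a small $\eps_0=\eps_0(\eps)>0$ and an integer $d=d(\eps)\ge1$ with $\eps_0(d+1)\ge 2$, to be pinned down below, and set $\sigma:=\delta^{\eps_0}$. Partition $[-1,1]^n$ into $\sim\sigma^{-n}$ congruent cubes $\tau$ of side $\sigma$ centred at points $x_\tau$; by flat decoupling (which in the range $p\le\frac{2(n+2)}{n}$ costs $(\#\{\tau\})^{1/2-1/p}$, not $(\#\{\tau\})^{1/2}$), $N_\delta^\phi([-1,1]^n)$ can be $\ell^p(L^p)$-decoupled into $\{N_\delta^\phi(\tau)\}_\tau$ at cost $\lesssim\sigma^{-n(1/2-1/p)}$. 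Fix $\tau$ and let $Q_\tau$ be the degree-$d$ Taylor polynomial of $y\mapsto\phi(x_\tau+\sigma y)$ at $y=0$; since $|\partial^\alpha\phi|\lesssim\|\phi\|_{C^d}$ for $|\alpha|\le d$, its coefficients are $\lesssim_{n,d}\|\phi\|_{C^d}$, so a bounded dilate of $Q_\tau$ lies in $\mathcal P_{n,d}$, while Taylor's theorem with remainder gives, once $\delta$ is small in terms of $\phi$ (using $\sigma^{d+1}=\delta^{\eps_0(d+1)}\le\delta^2$),
\[
\sup_{y\in[-1,1]^n}\bigl|\phi(x_\tau+\sigma y)-Q_\tau(y)\bigr|\ \lesssim\ \|\phi\|_{C^{d+1}}\,\sigma^{d+1}\ \le\ \delta ,
\]
together with a bound $\lesssim\sigma^{d}$ on the first derivatives of the difference. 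Hence, under the affine map $x=x_\tau+\sigma y$ with no vertical rescaling, $N_\delta^\phi(\tau)$ is comparable up to $O(1)$ dilations of $\delta$ to $N_{O(\delta)}^{Q_\tau}([-1,1]^n)$, so decoupling one into parallelograms is equivalent, up to $O(1)$ factors, to decoupling the other. Now I would apply $U(n)$ to $Q_\tau$ at scale $\delta$ with error parameter $\eps/2$, undo the affine map, and use the $C^1$-closeness above to transfer $(Q_\tau,\delta)$-flatness to $(\phi,O(\delta))$-flatness: this yields a bounded-overlap family of $(\phi,O(\delta))$-flat parallelograms inside $\tau$ into which $N_\delta^\phi(\tau)$ is $\phi$-$\ell^p(L^p)$-decoupled at cost $\lesssim\delta^{-\eps/2}$. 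These have width only $\ge\sigma\delta$, so if one insists on the exact normalisation of Definition~\ref{defn:graphical_polynomial_decoupling}, one coarsens them inside each $\tau$ to width $\ge\delta$ at a further flat-decoupling cost $\lesssim\sigma^{-n(1/2-1/p)}$, which also brings the total count down to $\lesssim\delta^{-n}$ and keeps bounded overlap.

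\textbf{Accounting and the hard part.} Composing these decouplings and taking the union over $\tau$ produces a covering $\mathcal R_\delta(\phi,\eps)$ of $[-1,1]^n$ by bounded-overlap $(\phi,\delta)$-flat parallelograms of width $\ge\delta$ into which $[-1,1]^n$ is $\phi$-$\ell^p(L^p)$-decoupled at total cost $\lesssim\sigma^{-2n(1/2-1/p)}\delta^{-\eps/2}$. On the range $p\le\frac{2(n+2)}{n}$ one has $n(1/2-1/p)\le\frac{n}{n+2}\le 1$, so this is $\lesssim\delta^{-2\eps_0}\delta^{-\eps/2}$; taking $\eps_0:=\eps/4$ makes the exponent $\le\eps$, and then any integer $d\ge 8/\eps$ satisfies $\eps_0(d+1)\ge 2$, so $d\sim 1/\eps$ and all implicit constants depend only on $\eps,p$ and $\|\phi\|_{C^{d+1}}$, i.e.\ on $\eps,p$ and the $C^{[1/\eps]}$-norm of $\phi$ up to an absolute factor in the order of smoothness. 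The only genuine subtlety is exactly this balancing: one must choose the cube scale $\sigma$ coarse enough that the flat-decoupling losses at the top and in the width-adjustment stay $\lesssim\delta^{-\eps}$, yet fine enough that the per-cube Taylor remainder is $\lesssim\delta$; it is the elementary but crucial fact that subcritical flat $\ell^p(L^p)$-decoupling into $M$ pieces costs $M^{1/2-1/p}$ rather than $M^{1/2}$ that lets both demands be met with an auxiliary degree — hence a number of derivatives of $\phi$ — of size only $\sim 1/\eps$.
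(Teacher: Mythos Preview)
Your approach is correct and is precisely the Taylor-expansion reduction the paper invokes from \cite[Section~2]{LiYang2023}. One small point: the ``coarsening'' step you propose to restore width $\ge\delta$ does not quite work as written (merging pieces does not incur a flat-decoupling cost, and it is unclear why the merged pieces remain $(\phi,\delta)$-flat parallelograms with bounded overlap); the clean way to handle this is to note that the non-affine part of $Q_\tau$ has coefficients $O(\sigma^2\|\phi\|_{C^d})$, so after a vertical rescaling by $\sigma^{-2}$ one applies $U(n)$ at scale $\delta\sigma^{-2}$ rather than $\delta$, which already produces parallelograms of width $\ge\delta\sigma^{-1}\ge\delta$ in the original $x$-coordinates and total cardinality $\lesssim(\delta\sigma^{-2})^{-n}\cdot\sigma^{-n}\le\delta^{-n}$, with no further adjustment needed.
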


\subsection{Main results}
We now proceed to state the main decoupling theorems in this paper. We start with two trivial observations.
\begin{lem}\label{lem:monotonicity}
    For $n\ge 1$, $U(n+1)\implies U(n)$, $Z(n+1)\implies Z(n)$ and $S(n+1)\implies S(n)$.
\end{lem}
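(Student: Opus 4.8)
All three implications share the same one-line idea — adjoin a dummy variable — so I would write out $U(n+1)\implies U(n)$ and merely flag the changes needed for $Z$ and $S$. Given $\phi\in\mathcal P_{n,d}$, put $\tilde\phi\in\mathcal P_{n+1,d}$, $\tilde\phi(x_1,\dots,x_{n+1}):=\phi(x_1,\dots,x_n)$. Since $\tilde\phi$ does not involve $x_{n+1}$, its Hessian has a vanishing last row and column, so $\det D^2\tilde\phi\equiv0$; in particular the condition $\det D^2\phi\equiv0$ is inherited, which is the only extra observation needed for $Z$. For $S$ one instead uses that $\{x\in[-1,1]^{n+1}:|\tilde\phi(x)|<\delta\}=\{y\in[-1,1]^n:|\phi(y)|<\delta\}\times[-1,1]$. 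In every case the hypothesis in dimension $n+1$ applies to $\tilde\phi$ and produces a covering $\widetilde{\mathcal R}_\delta$ of $[-1,1]^{n+1}$ (resp.\ of the sublevel set) by flat parallelograms together with the associated decoupling inequality.

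Next I would project this data back down to dimension $n$. As $\tilde\phi$ sees only the first $n$ coordinates, a parallelogram $\tilde R\subseteq[-1,1]^{n+1}$ is $(\tilde\phi,\delta)$-flat exactly when its projection onto those coordinates is $(\phi,\delta)$-flat; hence, after grouping the members of $\widetilde{\mathcal R}_\delta$ lying over a common base, I may assume each is of product form $R\times[-1,1]$ with $R\subseteq[-1,1]^n$ a parallelogram of width $\ge\delta$. The family $\mathcal R_\delta:=\{R\}$ then covers $[-1,1]^n$, has bounded overlap, and consists of $(\phi,\delta)$-flat parallelograms (resp.\ parallelograms contained in $\{|\phi|\lesssim\delta\}$), giving items (1)--(2) of Definition~\ref{defn:graphical_polynomial_decoupling} (resp.\ of Definition~\ref{defn:sublevel_set_decoupling}). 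For the decoupling estimate in item (3) I would use a Fubini argument: a Schwartz function on $\R^{n+2}$ Fourier supported in $N^{\tilde\phi}_\delta(R\times[-1,1])$ has, for each frozen value of the frequency dual to $x_{n+1}$, a slice Fourier supported in $N^\phi_\delta(R)\subseteq\R^{n+1}$, and integrating the dimension-$(n+1)$ decoupling inequality over this frozen variable identifies the $\tilde\phi$-decoupling constant for $\{R\times[-1,1]\}$ with the $\phi$-decoupling constant for $\{R\}$ — equivalently, one lifts test functions on $\R^{n+1}$ by tensoring with a fixed bump in $x_{n+1}$.

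The step I expect to be the real obstacle is matching the Lebesgue ranges. The exponent $\tfrac{2(n+2)}{n}$ demanded by $U(n)$ is strictly larger than the exponent $\tfrac{2(n+3)}{n+1}$ that $U(n+1)$ hands us for $\tilde\phi$ (and similarly for the $S$-thresholds), so a naive restriction of $\tilde\phi$'s decoupling to a coordinate slice would only recover the smaller range. The resolution must exploit that $\tilde\phi$ is a cylinder — its graph is the graph of $\phi$ crossed with a line — so that the Fubini reduction above is an honest \emph{equivalence} for product coverings and ties the entire admissible range for $\phi$ to that of the cylinder; I would therefore run the argument throughout at the level of decoupling constants for product coverings, rather than by slicing, so that nothing in the range $2\le p\le\tfrac{2(n+2)}{n}$ is lost. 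Apart from this exponent bookkeeping and the check that $\widetilde{\mathcal R}_\delta$ may be chosen of product form, the argument is a routine change of variables.
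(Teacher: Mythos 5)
The paper records this lemma as a ``trivial observation'' and supplies no proof, so I can only judge your argument on its own terms. The dummy-variable lift is certainly the natural move, and your observations that $\det D^2\tilde\phi\equiv 0$ automatically and that the lifted sublevel set is a product are both correct. You have also put your finger on exactly the right obstacle --- but your proposed resolution of it does not work, and this is a genuine gap. The Fubini/tensoring argument shows that, \emph{at each fixed exponent} $p$, the $\phi$-decoupling constant for $\{R\}$ and the $\tilde\phi$-decoupling constant for the product covering $\{R\times[-1,1]\}$ are comparable. That equivalence can only transfer information you already possess about the cylinder, and the hypothesis $U(n+1)$ only asserts the cylinder decoupling for $2\le p\le \frac{2(n+3)}{n+1}$. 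To conclude $U(n)$ you need the range up to $\frac{2(n+2)}{n}>\frac{2(n+3)}{n+1}$, i.e.\ you need an upper bound on the cylinder constant in the window $\bigl(\frac{2(n+3)}{n+1},\frac{2(n+2)}{n}\bigr]$, which is precisely equivalent (by your own equivalence) to the statement $U(n)$ you are trying to prove. Saying that the equivalence ``ties the entire admissible range for $\phi$ to that of the cylinder'' is true but circular: the sharp admissible range of the cylinder is not what $U(n+1)$ hands you. Interpolation with the trivial $L^2$ estimate only propagates $\ell^p(L^p)$ decoupling \emph{downward} in $p$, so there is no soft way to cross the critical exponent of the ambient dimension. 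The same mismatch occurs for $S(n+1)\implies S(n)$ (range $\frac{2(n+2)}{n}$ versus the larger $\frac{2(n+1)}{n-1}$) and for the $Z$ statement. What your argument honestly delivers is each implication with the target range truncated at the higher-dimensional exponent $\frac{2(n+3)}{n+1}$; closing the remaining window requires an additional idea that is not in your write-up (and is not recorded in the paper either).

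There are also two smaller soft spots in the projection step, even within the truncated range. First, the covering $\widetilde{\mathcal R}_\delta$ produced by Definition~\ref{defn:graphical_polynomial_decoupling} in dimension $n+1$ has no reason to consist of product parallelograms, and distinct members do not ``lie over a common base'': their projections to the first $n$ coordinates are pairwise different convex bodies (zonotopes, not in general parallelograms), so the grouping you invoke is not a well-defined operation on that covering. Second, even if a grouping were available, coarsening an $\ell^p(L^p)$ decoupling is not free: the cardinality factor $(\#\mathcal R)^{\frac12-\frac1p}$ changes, and to pass from the fine family to the grouped family one needs a reverse inequality $\sum_{R\in\mathcal G_j}\norm{f_R}_p^p\gtrsim\norm{\sum_{R\in\mathcal G_j}f_R}_p^p\,(\#\mathcal G_j)^{1-p/2}$ inside each group, which you have not supplied. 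The cleaner route is the one you mention in passing at the end of the Fubini paragraph --- lift test functions from $\R^{n+1}$ by tensoring with a fixed bump in $x_{n+1}$ and apply the $(n+2)$-dimensional inequality directly --- but then one must still convert the resulting decoupling into the non-product pieces $N_\delta^{\tilde\phi}(\tilde R)$ back into a decoupling into vertical neighbourhoods $N_\delta^{\phi}(R)$ over genuine parallelograms $R\sub[-1,1]^n$ of width $\ge\delta$ with bounded overlap; this requires an argument, not just the remark that flatness is inherited by projections.
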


\begin{lem}\label{lem:trivial_implications}
For $n\ge 1$, $S(n+1)\implies U(n)\implies Z(n)$.
\end{lem}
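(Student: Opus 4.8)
The implication $U(n)\implies Z(n)$ requires no argument: $Z(n)$ is precisely $U(n)$ restricted to those $\phi\in\mathcal P_{n,d}$ that additionally satisfy $\det D^2\phi\equiv 0$, and the admissible range $2\le p\le \frac{2(n+2)}{n}$ of Lebesgue exponents is the same in both, so $Z(n)$ is a special case of $U(n)$. The content is therefore $S(n+1)\implies U(n)$, and the plan is to realise the graph of an $n$-variate polynomial as, up to harmless normalisations, a sublevel set of an $(n+1)$-variate polynomial, and then quote $S(n+1)$.

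Concretely, I would fix $\psi\in\mathcal P_{n,d}$; after dividing $\psi$ by a constant depending only on $n,d$ we may assume $\norm{\psi}_{C^1([-1,1]^n)}\le \tfrac12$, so that $N_\delta^\psi([-1,1]^n)\sub [-1,1]^{n+1}$ for small $\delta$, and then set $\Phi(x,t):=\tfrac12(t-\psi(x))$, which lies in $\mathcal P_{n+1,d}$ since $d\ge 1$. Because $\nabla\Phi=\tfrac12(-\nabla\psi,1)$ has magnitude $\asymp 1$ on $[-1,1]^{n+1}$, the $\delta$-sublevel set $\{(x,t)\in[-1,1]^{n+1}:|\Phi(x,t)|<\delta\}$ is comparable to $N_\delta^\psi([-1,1]^n)$ up to dilating $\delta$ by an $O_{n,d}(1)$ factor. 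The one bookkeeping point to check is that the exponent range in Definition \ref{defn:sublevel_set_decoupling} for $n+1$ variables, namely $2\le p\le \frac{2((n+1)+1)}{(n+1)-1}=\frac{2(n+2)}{n}$, coincides exactly with the range required by $U(n)$ in Definition \ref{defn:graphical_polynomial_decoupling}. Applying $S(n+1)$ to $\Phi$ with parameters $\delta,\eps$ then yields a bounded-overlap covering $\mathcal S_\delta$ of this sublevel set by parallelograms $S\sub[-2,2]^{n+1}$ of width $\ge\delta$ on which $|t-\psi(x)|\lesssim\delta$, such that $N_\delta^\psi([-1,1]^n)$ is $\ell^p(L^p)$-decoupled into $\mathcal S_\delta$ at cost $\lesssim\delta^{-\eps}$ in the full range of $p$.

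It then remains to convert each $S\in\mathcal S_\delta$ into graph-type pieces over $[-1,1]^n$. I would take $R_S\sub[-2,2]^n$ to be the projection of $S$ onto the first $n$ coordinates, split into $O_{n,d}(1)$ parallelograms if necessary. Since $S$ is a width-$\ge\delta$ parallelogram confined to an $O(\delta)$-neighbourhood of the graph of $\psi$, its vertical fibres have length $O(\delta)$, so the lower and upper boundary functions of $S$ over $R_S$ are a convex and a concave function within $O(\delta)$ of each other, hence both within $O(\delta)$ of affine; this forces $R_S$ to be $(\psi,C\delta)$-flat and $S$ to be comparable to the graph slab $N_{C\delta}^\psi(R_S)$. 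Enlarging the $S$'s to these slabs turns the decoupling just obtained into a $\psi$-$\ell^p(L^p)$-decoupling of $[-1,1]^n$ into $\mathcal R_\delta:=\{R_S\}$ at scale $C\delta$ and cost $\lesssim\delta^{-\eps}$; bounded overlap of $\mathcal R_\delta$ is inherited from that of $\mathcal S_\delta$ via the $\delta$-thin slab structure, and since the sublevel set has volume $\asymp\delta$ while each $S$ has volume $\gtrsim\delta^{n+1}$, the same count gives $\#\mathcal R_\delta\lesssim\delta^{-n}$. Relabelling $C\delta$ as $\delta$ is harmless because all costs are powers $\delta^{-\eps}$, and this yields $U(n)$. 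The only genuinely substantive step — and the one I expect to require the most care rather than a mere definition-chase — is this conversion: checking that the projections of the sublevel-set pieces are flat at scale $\delta$, that their $\psi$-graph neighbourhoods recover the original pieces up to constants, and that bounded overlap survives. It is elementary plane geometry (all one uses is that $S$ is a width-$\ge\delta$ parallelogram trapped in an $O(\delta)$-neighbourhood of a graph with bounded gradient), but everything else in the proof is just the substitution $\Phi(x,t)=t-\psi(x)$ together with the coincidence of the two exponent ranges.
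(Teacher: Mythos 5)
Your proposal is correct and follows exactly the route the paper indicates: the paper's entire proof is the remark that $S(n+1)\implies U(n)$ follows from taking $\Phi(x,y)=y-\phi(x)$, with $U(n)\implies Z(n)$ being a trivial restriction. You have simply fleshed out the details the paper leaves implicit (the normalisation putting $\Phi$ in $\mathcal P_{n+1,d}$, the coincidence of the exponent ranges $\frac{2((n+1)+1)}{(n+1)-1}=\frac{2(n+2)}{n}$, and the geometric conversion of thin sublevel-set parallelograms into $(\psi,O(\delta))$-flat graph pieces), and these details are all sound.
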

Note that $S(n+1)\implies U(n)$ follows from taking $\Phi(x,y)=y-\phi(x)$ for a given $\phi\in \mathcal P_{n,d}$.

Now we come to the nontrivial relations between these decoupling constants.
\begin{thm}\label{thm:main_U_imply_S}
    For $n\ge 1$, $U(n)$ implies $S(n+1)$.
\end{thm}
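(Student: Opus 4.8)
Our approach rests on the following heuristic: away from the locus where $\phi$ becomes essentially independent of some variable, the sublevel set $\{|\phi|\le\delta\}$ is a thin neighbourhood of the hypersurface $\{\phi=0\}$, whose decoupling is governed by $U(n)$, while the genuinely degenerate part gets peeled off and reduced to $S(n)$ in one fewer dimension. Concretely, fix $\phi\in\mathcal P_{n+1,d}$, $\delta\ll1$ and $\eps\in(0,1]$, and write points of $[-1,1]^{n+1}$ as $(x,t)$ with $x\in[-1,1]^n$. After the $\delta$-thickening reduction (as in the proof of Proposition~\ref{prop:delta_thickening}) it suffices to cover $E:=\{(x,t):|\phi(x,t)|\le\delta\}$ by boundedly overlapping parallelograms of width $\ge\delta$ in $[-2,2]^{n+1}$ with $\ell^p(L^p)$-decoupling cost $\lesssim\delta^{-\eps}$ for $2\le p\le\frac{2(n+2)}{n}$ -- exactly the exponent range in which $U(n)$ is assumed -- and the argument will spend the entire $p$-budget on the $x$-variables.

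First I would fibre over $t$. For each fixed $x$, the polynomial $t\mapsto\phi(x,t)$ has degree $\le d$, so its $\delta$-sublevel set is a union of at most $d$ intervals; carrying this out uniformly in $x$ by an elementary decomposition of the sublevel set of a univariate polynomial -- i.e.\ dyadic pigeonholing over $[-1,1]^{n+1}$ on the sizes of $\partial_t^j\phi$, $0\le j\le d$, losing only a factor $(\log\tfrac1\delta)^{O_d(1)}\lesssim\delta^{-\eps}$ -- reduces to two kinds of pieces. On a ``nondegenerate'' piece, $E$ lies in a union of at most $d$ curved slabs $\{(x,t):x\in E_j,\ |t-c_j(x)|\le h_j\}$, where $c_j$ is the semialgebraic, real-analytic function cut out implicitly by $\phi$ or by some $\partial_t^k\phi$, $h_j\ge\delta$ is a constant, and one may arrange $\|\nabla c_j\|\lesssim1$ while the higher derivatives of $c_j$ are bounded only by negative powers of the relevant threshold $\rho_j$; decoupling $E$ into these few, essentially disjoint slabs is trivial. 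On the one ``degenerate'' piece, $\phi$ is $\delta$-flat in $t$, which after a further elementary splitting reduces $E$ to $\{x:|\phi(x,t_0)|\lesssim\delta\}$ for some fixed $t_0$, crossed with an undivided $t$-interval; this is handled by $S(n)$ -- available from $U(n)$ via the lower-dimensional case of the present theorem, so that the whole proof runs by induction on $n$, with base case $n=1$ where $S(1)$ holds unconditionally.

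The main work is to decouple a single nondegenerate slab $\{|t-c(x)|\le h\}$ over $E'\subseteq[-1,1]^n$ into parallelograms of width $\ge\delta$. If $h\sim1$ the slab is a bounded union of boxes and nothing is needed, so assume $h$ small. After the (locally affine) straightening $(x,t)\mapsto(x,t-c(x))$ this becomes a decoupling problem for the graph of the $n$-variable function $c$ with vertical thickness $h$, and I would run a Bourgain--Demeter-type multiscale iteration: at each scale cover the current base cube by subcubes, affinely rescale each to $[-1,1]^n$ and renormalise so that the rescaled copy $\tilde c$ of $c$ has bounded $C^{O(1/\eps)}$-norm, decouple one step by applying $U(n)$ to (the degree-$O(1/\eps)$ Taylor polynomial of) $\tilde c$ -- or Theorem~\ref{thm:Bourgain_Demeter} on the part where $\det D^2\tilde c$ is bounded away from $0$ -- undo the rescaling, and recurse. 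Because the $t$-direction is never subdivided, no flat-decoupling ``$\delta^{-1/2}$'' loss ever enters, which is exactly why the admissible range stays $[2,\frac{2(n+2)}{n}]$. The two principles of decoupling of \cite{LiYang2024} are what organise this iteration and glue the scales. Finally, combining the resulting $x$-decoupling with the trivial (undivided) $t$-direction, and assembling the $\lesssim(\log\tfrac1\delta)^{O_d(1)}$ pieces and the $\le d$ slabs by the triangle inequality (using $q=p\ge2$), costs only a further $\delta^{-\eps}$; the output parallelograms have all $n+1$ dimensions $\ge\min(\delta,h)=\delta$ and are boundedly overlapping, so the requirements of Definition~\ref{defn:sublevel_set_decoupling} at dimension $n+1$ are met.

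The hard part is uniformity. The constants must not depend on $\delta$ or on the coefficients of $\phi$, yet the implicit functions $c_j$ have derivatives blowing up like $\rho_j^{-O_d(1)}$, so that without care the $\delta^{-\eps}$-type losses in $U(n)$ and in Theorem~\ref{thm:Bourgain_Demeter} would degrade into genuine powers of $\delta^{-1}$. One must show that the multiscale iteration of the previous paragraph closes: it runs for only $O_{d,\eps}(\log\tfrac1\delta)$ scales; at each scale the rescaled data genuinely satisfies the hypotheses of $U(n)$ or of Bourgain--Demeter with $O_d(1)$ parameters; and the product of the per-scale losses stays $\lesssim\delta^{-\eps}$ -- together with verifying that the degenerate-piece reduction to $S(n)$ and the bookkeeping of the univariate sublevel-set decomposition respect these uniform bounds. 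Supplying these closure statements, which is precisely what the formalism of \cite{LiYang2024} is built to do, is where essentially all of the effort goes.
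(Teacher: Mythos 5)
Your overall architecture matches the paper's: split off a degenerate part handled by the lower-dimensional sublevel statement (induction on $n$), and handle the nondegenerate part, where the zero set is an implicit graph $t=c(x)$, by a Pramanik--Seeger-type multiscale iteration in which each step is powered by $U(n)$ applied to a polynomial approximant of $c$. This is precisely the content of the paper's Propositions \ref{prop:AJM} and \ref{prop:pseudo}. However, there are two genuine gaps.

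First, your reduction to ``at most $d$ curved slabs $\{|t-c_j(x)|\le h_j\}$ over regions $E_j$'' is asserted, not achieved. The dyadic pieces $\{|\partial_t^k\phi|\sim\rho_k\}$ produced by your pigeonholing are arbitrary semialgebraic sets, and Definition \ref{defn:sublevel_set_decoupling} requires the final cover to consist of boundedly overlapping \emph{parallelograms} of width $\ge\delta$; so the bases $E_j\subseteq[-1,1]^n$ must themselves be decoupled into parallelograms, which is again a sublevel-set decoupling problem, and the constancy of the heights $h_j$ over each $E_j$ does not come for free. The paper resolves exactly this by a second induction, on the \emph{degree} $d$: the level/sublevel sets of $\partial_j\phi$ (degree $\le d-1$) are decoupled into rectangles by the degree-induction hypothesis, and a rotation lemma (Lemma \ref{lem:one_derivative_large}) is then needed because those rectangles are tilted and one must recover a large directional derivative adapted to each rectangle. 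Your scheme has no substitute for this step.

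Second, your multiscale iteration as described does not close. You propose to rescale and apply $U(n)$ to the Taylor polynomial of the implicit function $\tilde c$, but you yourself note that the higher derivatives of $c_j$ blow up like $\rho_j^{-O_d(1)}$; when the derivative threshold is small (which is the generic situation after pigeonholing), the rescaled $\tilde c$ does \emph{not} have $C^{O(1/\eps)}$-norm bounded by $O_{n,d}(1)$, so the per-scale loss is not uniform and the product over $O(\log\frac1\delta)$ scales degrades to a power of $\delta^{-1}$. The paper avoids this in two ways you are missing: (i) an explicit renormalization (Steps 3--5 of Section \ref{sec:general_case}) that rescales the box so that the relevant partial derivative becomes $\sim 1$ \emph{before} invoking the implicit-function decoupling, so that the pseudo-polynomial genuinely has $O_{n,d}(1)$ derivatives of all orders; and (ii) in the iteration itself (Proposition \ref{prop:pseudo}), approximating $\psi$ not by its Taylor polynomial but by the exact polynomial $-A_0$ extracted from $\phi=A_0+A_1x_n+A_2x_n^2$, whose coefficients are uniformly bounded because $\phi\in\mathcal P_{n,d}$. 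Labelling these closure statements as ``where essentially all of the effort goes'' is accurate, but they are the theorem, not an afterthought.
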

Indeed, in \cite{LiYang2023}, $S(2)$ (but without the width lower bound) is proved based on an iterative method using $U(1)$. This turns out to be true in all dimensions. 

Another main result proved in \cite{LiYang2023} is $U(2)$, which is proved by applying $Z(2)$ and establishing $S(2)$ (all without the width lower bound). Another main theorem of this paper is to prove that it is true in all dimensions.
\begin{thm}\label{thm:main_SZ_imply_U}
    For $n\ge 1$, $S(n)$ and $Z(n)$ together imply $U(n)$.
\end{thm}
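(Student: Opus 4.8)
The plan is to run a dichotomy on the Gaussian curvature $H:=\det D^2\phi$ of the given $\phi\in\mathcal P_{n,d}$, which is a polynomial of degree and coefficient size $\lesssim_{n,d}1$. If $H\equiv 0$, then after rescaling $\phi$ into the class of Definition \ref{defn:graphical_polynomial_decoupling_zero} the desired conclusion is exactly $Z(n)$; note that the Lebesgue range $2\le p\le\frac{2(n+2)}{n}$ is identical in Definitions \ref{defn:graphical_polynomial_decoupling} and \ref{defn:graphical_polynomial_decoupling_zero}. So I would assume $H\not\equiv 0$, normalise it to $\tilde H\in\mathcal P_{n,D}$ with $D=D(n,d)$, fix a small auxiliary exponent $\sigma=\sigma(\eps,n,d)>0$, put $\rho:=\delta^{\sigma}$, and split $[-1,1]^n$ into the flat-Hessian piece $\{|\tilde H|\le\rho\}$ and the $O(\log\frac1\delta)$ dyadic annuli $\{|\tilde H|\sim\lambda\}$ with $\rho\le\lambda\le 1$. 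Each of the $O(\log\frac1\delta)$ pieces is decoupled separately and the results glued by the decoupling triangle inequality, costing at most $\delta^{-\eps}$ after reallocating $\eps$. (For $n=1$ there is nothing to prove since $U(1)$ is already known, so one may take $n\ge 2$.)

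On an annulus $\{|\tilde H|\sim\lambda\}$ the matrix $D^2\phi$ has bounded norm and determinant of size $\sim\lambda$. Using the uniform boundedness of all derivatives $\partial^\alpha\phi$ together with a covering argument --- one of the two principles of \cite{LiYang2024} --- I would cover the annulus by boundedly overlapping parallelograms $P$ of width $\gtrsim\delta$ so that the affine map sending $P$ onto $[-1,1]^n$, post-composed with a renormalisation, turns $\phi$ into some $\phi_P$ with $\norm{\phi_P}_{C^3}\lesssim 1$ which, down to the residual scale, is governed by a non-degenerate surface in some number $k\le n$ of variables and is flat in the remaining $n-k$ directions. On each $P$ one then applies Theorem \ref{thm:Bourgain_Demeter} in the $k$ curved variables (giving $\ell^p(L^p)$ decoupling for $2\le p\le\frac{2(k+2)}{k}$, which contains $[2,\frac{2(n+2)}{n}]$) and handles the flat directions trivially, or by the lower-dimensional decoupling $U(n-1)$ that $S(n)$ already supplies through Lemma \ref{lem:trivial_implications}; this decouples $P$ into $(\phi,\delta)$-flat parallelograms of width $\gtrsim\delta$ at cost $\lesssim_\eps\lambda^{-C_n\eps}\delta^{-\eps}$. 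Composing over the $O(\log\frac1\delta)$ scales $\lambda$ and with the coverings costs a further $\delta^{-O(\sigma)}$, which is harmless once $\sigma$ is small compared to $\eps$.

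On the flat-Hessian piece $B=\{|\tilde H|\le\rho\}$ I would invoke $S(n)$ for $\tilde H$ at scale $\rho$: this decouples $B$ into boundedly overlapping parallelograms $S\subset[-2,2]^n$ of width $\ge\rho$, with $\ell^p(L^p)$ cost $\lesssim\rho^{-\eps}$ for the whole range $2\le p\le\frac{2(n+1)}{n-1}$, which comfortably contains $[2,\frac{2(n+2)}{n}]$. For each $S$ I would pass to the renormalised rescaling $\phi_S\in\mathcal P_{n,d}$ of $\phi$ onto $[-1,1]^n$; a scaling identity expresses $\det D^2\phi_S$ through the values of $\tilde H$ on $S$, which are $\le\rho$, so in the appropriate normalised sense $\phi_S$ is more degenerate than $\phi$. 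I would then re-run the whole argument on $\phi_S$ at the residual scale $\le\delta^{1-\sigma}$ and iterate; since each pass strictly decreases a finite algebraic measure of non-degeneracy of $\det D^2$ and eventually reaches the identically-zero case --- which $Z(n)$ closes off --- the recursion halts after $O_{n,d}(1)$ stages, and the bootstrapping principle of \cite{LiYang2024} assembles the intermediate-scale decouplings into a single decoupling of $[-1,1]^n$ into $(\phi,\delta)$-flat parallelograms of width $\ge\delta$ at total cost $\lesssim\delta^{-\eps}$.

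I expect the degenerate branch to be the main obstacle. One must isolate a quantitative invariant --- a vanishing order, the degree of $\det D^2$, or an analogous algebraic quantity --- that provably strictly decreases under the rescaling on each sublevel-set parallelogram $S$, even though the renormalisation needed to keep $\phi_S\in\mathcal P_{n,d}$ can a priori re-inflate $\det D^2\phi_S$. One must also control the multiplicative accumulation of $\delta^{-\eps}$-type losses over the $O_{n,d}(1)$ (at worst $O(\log\frac1\delta)$) stages of the recursion, and ensure the width lower bound $\ge\delta$ survives it. A secondary technical point is arranging the anisotropic cell decomposition of the curved branch uniformly in the coefficients of $\phi$, and cleanly interfacing the flat decoupling in $\R^n$ provided by $S(n)$ with the $\phi$-graphical decoupling demanded by $U(n)$ --- which is precisely where the two principles of \cite{LiYang2024} do the work.
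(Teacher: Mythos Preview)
Your sketch is structurally the same as the paper's proof, which simply invokes the degeneracy locating principle (Theorem \ref{thm:degeneracy_locating_principle}) after checking its hypotheses: $S(n)$ supplies the sublevel set decoupling, $Z(n)$ the degenerate decoupling, Theorem \ref{thm:Bourgain_Demeter} the nondegenerate decoupling, and $S(n)\Rightarrow\tilde U(n-1)$ (Lemma \ref{lem:trivial_implications_tilde}) the lower-dimensional input. What you are outlining is essentially a re-derivation of that principle.

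The real gap --- which you yourself flag --- is the termination of the degenerate recursion. Your proposed mechanism, a strictly decreasing ``algebraic measure of non-degeneracy of $\det D^2$'', does not work as stated: when you renormalise $\phi_S$ back into $\mathcal P_{n,d}$ you divide by a potentially tiny constant, which can re-inflate $\det D^2\phi_S$ to full size with no drop in any obvious algebraic invariant (degree, vanishing order, etc.). The device that actually closes the loop is the \emph{totally degenerate approximation} built into Definition \ref{defn:degeneracy_determinant}\ref{item:totally_degenerate_approximation} (verified for $H=\det D^2$ in Corollary \ref{cor:hessian}): whenever $|\det D^2\phi|\le\sigma$ uniformly on $[-1,1]^n$, the polynomial $\phi$ is $O(\sigma^\beta)$-close in $L^\infty$ to some $\psi\in\mathcal P_{n,d}$ with $\det D^2\psi\equiv 0$, and one applies $Z(n)$ to $\psi$ (not to $\phi$) at the intermediate scale $\sigma^\beta$. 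The iteration is therefore a Pramanik--Seeger bootstrap on \emph{scales}, gaining a fixed power of $\sigma$ each pass and halting in $O_\eps(1)$ steps, rather than a descent on algebraic complexity. A minor secondary point: on the nondegenerate annuli there is no need to split into ``$k$ curved, $n-k$ flat'' directions --- $|\det D^2\phi|\sim\lambda$ already means the Hessian has full rank, and Theorem \ref{thm:Bourgain_Demeter} applies directly with $K\sim\lambda^{-1}$.
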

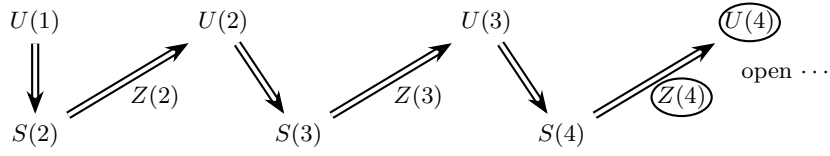
\begin{figure}
    \centering
    \begin{tikzpicture}[
    node distance=2.5cm and 2cm,
    every node/.style={font=\small},
    arrow/.style={-{Stealth}, double, double distance=1.5pt, thick}
  ]
  \node (u1) at (0,0) {$U(1)$};
  \node (s2) at (0,-1.5) {$S(2)$};
  \node (u2) at (2.5,0) {$U(2)$};
  \node (s3) at (3.5,-1.5) {$S(3)$};
  \node (u3) at (6,0) {$U(3)$};
  \node (s4) at (7,-1.5) {$S(4)$};
  \node (u4) at (9.5,0) {$U(4)$};
  
  \draw[arrow] (u1) -- (s2);
  \draw[arrow] (s2) -- (u2) coordinate[pos=0.25] (mid_s2_u2);
  \draw[arrow] (u2) -- (s3);
  \draw[arrow] (s3) -- (u3) coordinate[pos=0.25] (mid_s3_u3);
  \draw[arrow] (u3) -- (s4);
  \draw[arrow] (s4) -- (u4) coordinate[pos=0.25] (mid_s4_u4);
  
  \node[right=0.3cm of mid_s2_u2] (z2) {$Z(2)$};
  \node[right=0.3cm of mid_s3_u3] (z3) {$Z(3)$};
  \node[right=0.3cm of mid_s4_u4] (z4) {$Z(4)$};
  
  \draw[thick] (9.5,0) ellipse (0.4cm and 0.25cm);
  \draw[thick] (z4) ellipse (0.4cm and 0.25cm);
  
  \node at (10,-0.7) {open $\cdots$};
\end{tikzpicture}
    \caption{Implications between decoupling statements}
    \label{fig:implications}
\end{figure}

Combining Theorems \ref{thm:main_U_imply_S} and \ref{thm:main_SZ_imply_U}, we have the following corollary.
\begin{cor}\label{cor:conj_holds}
    For $n\ge 2$, $U(n-1)$ and $Z(n)$ together imply $U(n)$. As a result, if $Z(n)$ holds for all $n\ge 1$, then Conjecture \ref{conj:decoupling} holds.
\end{cor}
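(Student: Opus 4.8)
The plan is to deduce the corollary purely formally from Theorems \ref{thm:main_U_imply_S} and \ref{thm:main_SZ_imply_U}, since all of the analytic content has already been isolated in those two statements. For the first assertion, fix $n\ge 2$. I would first invoke Theorem \ref{thm:main_U_imply_S} with $n-1$ in place of $n$ (legitimate since $n-1\ge 1$): this upgrades the hypothesis $U(n-1)$ to $S(n)$. With $S(n)$ in hand, together with the hypothesis $Z(n)$, I would then apply Theorem \ref{thm:main_SZ_imply_U} to conclude $U(n)$. No new estimates are needed; one only has to check that the two invocations are compatible, i.e.\ that the family of parallelograms and the $\delta^{-\eps}$ cost produced by the first theorem are exactly of the shape the second theorem takes as input. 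This is automatic, because both theorems are phrased in terms of the same uniform statements from Definitions \ref{defn:graphical_polynomial_decoupling}, \ref{defn:graphical_polynomial_decoupling_zero}, and \ref{defn:sublevel_set_decoupling}.

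For the second assertion, I would argue by induction on $n$ that $U(n)$ holds for every $n\ge 1$, under the standing hypothesis that $Z(n)$ holds for all $n\ge 1$. The base case $U(1)$ is unconditional by \cite{Yang2}, as recorded above. For the inductive step, assume $U(n-1)$ for some $n\ge 2$; combining it with $Z(n)$ and the first part of the corollary already proved yields $U(n)$. Hence $U(n)$ holds for all $n\ge 1$. To finish, I would observe that $S(n)$ then holds for all $n\ge 1$ as well: $S(1)$ is unconditional (the fundamental theorem of algebra together with $\delta$-thickening, cf.\ Proposition \ref{prop:delta_thickening}), while for $n\ge 2$ Theorem \ref{thm:main_U_imply_S} applied with $n-1$ converts the already established $U(n-1)$ into $S(n)$. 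Since $Z(n)$, $U(n)$, and $S(n)$ now all hold for every $n\ge 1$, Conjecture \ref{conj:decoupling} follows.

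There is essentially no obstacle internal to this corollary: its content is the bookkeeping of chaining implications, and it is precisely the two principles packaged in Theorems \ref{thm:main_U_imply_S} and \ref{thm:main_SZ_imply_U} that do the work. The one point worth a second glance is the choice to run the induction on $U(\cdot)$ rather than on $S(\cdot)$ or $Z(\cdot)$: Theorem \ref{thm:main_SZ_imply_U} converts $S(n)$ (plus $Z(n)$) into $U(n)$ at the \emph{same} level $n$, whereas Theorem \ref{thm:main_U_imply_S} only raises the level, $U(n)\Rightarrow S(n+1)$, so it is the combined implication $U(n-1)\wedge Z(n)\Rightarrow U(n)$ that forms the correct inductive unit, exactly as depicted in Figure \ref{fig:implications}.
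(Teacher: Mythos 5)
Your proposal is correct and is essentially the paper's own argument: the first assertion is exactly the chain $U(n-1)\Rightarrow S(n)$ (Theorem \ref{thm:main_U_imply_S} at level $n-1$) followed by $S(n)\wedge Z(n)\Rightarrow U(n)$ (Theorem \ref{thm:main_SZ_imply_U}), and the second assertion is the induction on $n$ starting from the unconditional $U(1)$, with $S(n)$ recovered afterwards via Theorem \ref{thm:main_U_imply_S}. Nothing further is needed.
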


This means that in general, studying Conjecture \ref{conj:decoupling} is equivalent to studying decoupling for all polynomials $\phi\in \mathcal P_{n,d}$ satisfying $\det D^2\phi\equiv 0$. A natural approach is to classify all these polynomials, but it turns out to be rather difficult in higher dimensions, since it is connected to the Jacobi conjecture in algebraic geometry \cite{SingularHessian2004}. Nevertheless, in this paper, using the partial results established in \cite{SingularHessian2004}, we establish the following theorem.
\begin{thm}\label{thm:trivariate_decoupling}
The statement $Z(3)$ holds. In fact, the range of $p$ can be enlarged to $2\le p\le 4$.
\end{thm}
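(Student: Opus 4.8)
\emph{Sketch of the approach.} The plan is to combine the classification of trivariate polynomials with identically vanishing Hessian determinant (\cite{SingularHessian2004}, recorded in Lemma \ref{lem:zero_Hessian_paper}) with the two decoupling principles of \cite{LiYang2024}. Fix $\phi\in\mathcal P_{3,d}$ with $\det D^2\phi\equiv 0$, and $\delta\ll_d 1$, $\eps\in(0,1]$. By Lemma \ref{lem:zero_Hessian_paper}, after an affine change of coordinates $\phi$ either (i) depends on at most two of the variables, or (ii) is \emph{near-affine}. A preliminary point common to both cases: the affine change supplied by \cite{SingularHessian2004} is only qualitative, so its norm need not be controlled by the coefficient bound defining $\mathcal P_{3,d}$; as in \cite{LiYang2024}, this is remedied by the lower-complexity (``projection'') principle together with pigeonholing over $O_d(1)$ coefficient regimes, after which we may work with a quantitatively normalized form. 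In both cases the genuinely curved part of the graph is at most two-dimensional, so the relevant Bourgain--Demeter exponent is that of a hypersurface in $\R^3$, namely $\tfrac{2(3+1)}{3-1}=4$, which is precisely the enlarged range asserted, as opposed to the default $\tfrac{2(3+2)}{3}=\tfrac{10}{3}$.

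\emph{Case (i): $\phi$ essentially bivariate.} Write $\phi(x_1,x_2,x_3)=\psi(x_1,x_2)$ with $\psi\in\mathcal P_{2,d}$ (rescaling $\psi$ by an $O_d(1)$ constant, which does not affect decoupling). Apply $U(2)$ (\cite[Theorem 5.22]{LiYang2024}) to $\psi$, obtaining a boundedly overlapping covering $\mathcal R'_\delta$ of $[-1,1]^2$ by $(\psi,\delta)$-flat parallelograms of width $\ge\delta$ with $\#\mathcal R'_\delta\lesssim\delta^{-2}$, into which $[-1,1]^2$ can be $\psi$-$\ell^p(L^p)$ decoupled at scale $\delta$ at cost $\lesssim\delta^{-\eps}$ for $2\le p\le 4$. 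Put $\mathcal R_\delta:=\{R'\times[-1,1]:R'\in\mathcal R'_\delta\}$. Then $\#\mathcal R_\delta\lesssim\delta^{-2}\le\delta^{-3}$, each parallelogram still has width $\ge\delta$, and $(\phi,\delta)$-flatness is inherited because $\phi$ is independent of $x_3$ (so $D^2\phi$ has no entry involving $x_3$). Since the $\phi$-extension operator factors as the $\psi$-extension operator tensored with the identity in $x_3$, the required $\phi$-$\ell^p(L^p)$ decoupling of $[-1,1]^3$ into $\mathcal R_\delta$ follows from that for $\psi$ by Minkowski's inequality in the $x_3$ variable. This verifies Definition \ref{defn:graphical_polynomial_decoupling_zero} in case (i), with $2\le p\le 4$.

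\emph{Case (ii): $\phi$ near-affine.} In normalized form $\phi(x,y,z)=x\,a(z)+y\,b(z)+c(z)$ with $a,b,c$ univariate of degree $\le d$ and coefficients $\lesssim_d 1$, so the graph is a one-parameter family of affine $2$-planes $\Pi_z$ and the only curvature is carried by the $z$-direction; up to harmless shears the tilt of $\Pi_z$ is governed by $\vec v(z)=(a'(z),b'(z))$. On a $z$-interval $I=[z_0-h,z_0+h]$ one has, modulo affine functions of $(x,y,z)$, $\phi\equiv|\vec v(z_0)|\,(z-z_0)\,u+O(h^2)$, where $u$ is the coordinate on the $(x,y)$-plane along $\vec v(z_0)$; thus in the directions $(z,u)$ the graph is a nondegenerate (hyperbolic) $2$-surface, while the orthogonal horizontal direction $u^\perp$ is flat up to $O(h^2)$. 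I would choose the $z$-decomposition by a flat/curvature decomposition in the style of \cite{LiYang2024}, ultimately relying on the one-dimensional decoupling $U(1)$ of \cite{Yang2} applied to the coefficient curve $z\mapsto(a(z),b(z))$: on each piece where $\vec v$ does not degenerate, after normalization and parabolic rescaling the graph is a small perturbation of a cylinder over a nondegenerate surface in $\R^3$, which Theorem \ref{thm:Bourgain_Demeter} decouples in the two curved directions for $2\le p\le 4$; the flat direction $u^\perp$ is retained whole, its variation being $\lesssim\delta$ once $h\lesssim\delta^{1/2}$, and the count of resulting parallelograms is $\lesssim\delta^{-3}$ with widths $\ge\delta$. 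It remains to treat the set where $\vec v$ is small, i.e.\ the sublevel set $\{z:(a'(z))^2+(b'(z))^2\lesssim h^2\}$, which, being a sublevel set of a univariate polynomial of degree $\le 2d$, is a union of $O_d(1)$ intervals (by $S(1)$); on each such interval the linear term is unusable and one recurses to a finer scale, passing to the next order of vanishing of $(a,b)$, which terminates after $O_d(1)$ steps since the degree is bounded. Assembling the pieces with the two principles of \cite{LiYang2024} — the projection principle to pass between the graph and its two-dimensional curved shadow, the degenerate-direction principle to run the induction on scales — and summing over the $O_d(1)$ recursion levels, with an $\eps$-removal if necessary, produces a covering $\mathcal R_\delta$ satisfying Definition \ref{defn:graphical_polynomial_decoupling_zero} with cost $\lesssim_\eps\delta^{-\eps}$ and range $2\le p\le 4$.

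\emph{Main obstacle.} Case (i) is routine given $U(2)$; the substance is case (ii). The delicate points I expect are: (a) bookkeeping the parabolic rescalings so that the $O(h^2)$ errors and induced scales remain compatible with the width-$\ge\delta$ and $(\phi,\delta)$-flatness requirements; (b) the recursion on the degenerate locus where $\vec v$, or its higher-order analogues at deeper steps, is small — one must recurse while keeping the families boundedly overlapping and the cardinalities $\lesssim\delta^{-3}$, and then close the induction on scales with only a $\delta^{-\eps}$ loss, for which the finite recursion depth and the sublevel-set inputs ($S(1)$, and if needed $S(2)$ from \cite[Theorem 5.23]{LiYang2024}) are essential; and (c) ensuring that at every stage the curvature exploited is that of a curve or of a $2$-surface in $\R^3$, never of a full hypersurface in $\R^4$, so that the decoupling exponent stays $\ge 4$ — the near-affine structure is exactly what guarantees this, and it is the mechanism behind the claimed enlargement of the range to $2\le p\le 4$.
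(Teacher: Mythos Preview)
Your approach is essentially the paper's: reduce via Lemma~\ref{lem:zero_Hessian_paper}, dispatch the bivariate case by $U(2)$, and treat the near-affine case by decomposing in the distinguished variable according to the size of the coefficient-derivative vector. The paper packages case~(ii) as the $n=3$ instance of Theorem~\ref{thm:almost_affine}, proved by plugging into the degeneracy locating principle (Theorem~\ref{thm:degeneracy_locating_principle}) with degeneracy determinant $H^0\phi=\sum_i (A_i'(x_1))^2$ --- your $|\vec v|^2$ augmented by $(c')^2$ --- and with the nondegenerate input (Proposition~\ref{prop:Pramanik_Seeger}) being a Pramanik--Seeger iteration that collapses to the $n=2$ Bourgain--Demeter theorem, exactly as you sketch. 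One simplification you are missing: the totally degenerate case $H^0\phi\equiv 0$ forces $\phi$ to be affine (or, with your omission of $c'$ from $\vec v$, a function of $x_1$ only, handled by $\tilde U(1)$), so no ``recursion to the next order of vanishing of $(a,b)$'' is needed --- the black-box framework of Theorem~\ref{thm:degeneracy_locating_principle} absorbs all of the scale bookkeeping you flag in~(a) and~(b). Your uniformity concern about the possibly unbounded affine change $\Xi$ is legitimate; the paper does not spell this out either.
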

As a corollary, we obtain a uniform decoupling result for all trivariate polynomials, and a sublevel set decoupling for all $4$-variate polynomials.
\begin{cor}\label{cor:trivariate_decoupling}
    The statements $U(3)$ and $S(4)$ hold.
\end{cor}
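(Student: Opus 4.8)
The plan is to obtain both statements formally by chaining the implications already established, as summarised in Figure~\ref{fig:implications}. The two inputs are the known base case $U(2)$ (from \cite[Theorem 5.22]{LiYang2024}) and the statement $Z(3)$, which is Theorem~\ref{thm:trivariate_decoupling}.

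First I would derive $U(3)$. Corollary~\ref{cor:conj_holds} with $n=3$ states that $U(2)$ and $Z(3)$ together imply $U(3)$; since both hypotheses are now available, $U(3)$ follows at once. If one wishes to see the mechanism, this unwinds into two steps: Theorem~\ref{thm:main_U_imply_S} with $n=2$ upgrades $U(2)$ to $S(3)$, and then Theorem~\ref{thm:main_SZ_imply_U} with $n=3$ combines $S(3)$ and $Z(3)$ to give $U(3)$. I would include this intermediate observation mainly to record that $S(3)$ also holds en route.

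Next I would derive $S(4)$. With $U(3)$ in hand, Theorem~\ref{thm:main_U_imply_S} applied with $n=3$ --- which asserts $U(n)\implies S(n+1)$ --- yields $S(4)$ directly.

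There is no genuine obstacle: the corollary is pure bookkeeping on top of the three main theorems and the low-dimensional base cases. The only things worth a sanity check are the exponent ranges and the structural side conditions. For the former, the range $2\le p\le 4$ in Theorem~\ref{thm:trivariate_decoupling} comfortably contains the range $2\le p\le \tfrac{2(n+2)}{n}=\tfrac{10}{3}$ required in Definition~\ref{defn:graphical_polynomial_decoupling} for $U(3)$, so feeding $Z(3)$ into Corollary~\ref{cor:conj_holds} loses nothing; the resulting $S(4)$ then carries the range $2\le p\le \tfrac{2(4+1)}{4-1}=\tfrac{10}{3}$ prescribed in Definition~\ref{defn:sublevel_set_decoupling}. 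For the latter, the width lower bounds and bounded-overlap properties are part of the statements of $U(n)$, $Z(n)$ and $S(n)$ themselves, so they propagate automatically through each implication and require no extra argument.
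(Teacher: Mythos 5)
Your proposal is correct and follows exactly the paper's intended route: $U(2)$ and $Z(3)$ combine via Corollary~\ref{cor:conj_holds} (i.e., Theorem~\ref{thm:main_U_imply_S} giving $S(3)$ and then Theorem~\ref{thm:main_SZ_imply_U} giving $U(3)$), and Theorem~\ref{thm:main_U_imply_S} with $n=3$ then yields $S(4)$, just as in Figure~\ref{fig:implications}. Your checks on the exponent ranges ($4 \ge \tfrac{10}{3}$ for feeding $Z(3)$ in, and $\tfrac{2(n+1)}{n-1}=\tfrac{10}{3}$ for $S(4)$) are also accurate.
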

The proof of $Z(3)$ uses the following key fact from \cite{SingularHessian2004}:
\begin{lem}\label{lem:zero_Hessian_paper}
    If $\phi(x_1,x_2,x_3)$ is a trivariate polynomial without affine terms that satisfies $\det D^2\phi\equiv 0$, then either of the following statements is correct:
    \begin{enumerate}
        \item There exists an affine bijection $\Xi$ on $\R^3$ such that $\phi\circ \Xi$ depends on $x_1,x_2$ only.
        \item There exists an affine bijection $\Xi$ on $\R^3$ such that $\phi\circ \Xi$ is of the form $A_1(x_1)+A_2(x_1)x_2+A_3(x_1)x_3$, where $A_1,A_2,A_3$ are univariate polynomials.
    \end{enumerate}
\end{lem}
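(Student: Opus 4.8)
The plan is to treat this as a classification theorem, organised by the generic rank $r$ of the Hessian $D^2\phi$ over $\R^3$. Since $\det D^2\phi\equiv 0$ we have $r\in\{0,1,2\}$, with $D^2\phi(x)$ of rank exactly $r$ off a proper algebraic subset. The two low-rank cases are short. If $r=0$ then $\phi$ is affine, hence $\phi\equiv 0$ as it has no affine terms, which is vacuously conclusion (1). If $r=1$ then the gradient map $\nabla\phi$ has a $1$-dimensional image; using that $D^2\phi=D(\nabla\phi)$ is symmetric, its row and column spaces agree with the tangent line to that image curve, and the fibres of $\nabla\phi$ are the hypersurfaces orthogonal to it. Since $\nabla\phi$ is constant on a fibre, that fibre lies in a single hyperplane orthogonal to a fixed direction; distinct fibres are disjoint, so these hyperplanes are parallel, the image curve is a line, and $\phi$ is a polynomial in one affine form. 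After an affine change $\phi$ then depends on $x_1$ alone — a special case of (1).

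The substance is the case $r=2$. Here $\Sigma:=\overline{\nabla\phi(\R^3)}$ is an irreducible $2$-dimensional algebraic surface. Again using that $D^2\phi(x)=D_x(\nabla\phi)$ is symmetric of rank $2$: its column space is tangent to $\Sigma$ at $\nabla\phi(x)$ and, being $2$-dimensional, equals $T_{\nabla\phi(x)}\Sigma$; by symmetry so does the row space, whence $\ker D^2\phi(x)$ is the line spanned by the unit normal $N(\nabla\phi(x))$ of $\Sigma$. Consequently $\nabla\phi$ is constant along the integral curves of the vector field $x\mapsto N(\nabla\phi(x))$; this field is then constant along its own integral curves, so those curves are straight segments, $\phi$ is affine on each, and $\R^3$ is (generically) foliated by a $2$-parameter family $\{\ell_p\}_{p\in\Sigma}$ of such segments, $\ell_p$ having direction $N(p)$ and carrying the constant value $\nabla\phi\equiv p$. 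This is the developable structure of solutions of the homogeneous Monge--Ampère equation, one dimension higher than usual.

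What remains is to pin down $\Sigma$. The dichotomy I would establish is: (a) $\Sigma$ is contained in a plane $\iff$ some nonzero directional derivative $\partial_w\phi$ is constant $\iff$, after an affine change, $\partial_1\phi$ is constant, i.e.\ $\phi=cx_1+\psi(x_2,x_3)$, so $\phi$ depends on two variables — conclusion (1); and (b) if $\Sigma$ is a cylinder over a curve that is not a line, then after an affine change placing its axis along $e_1$ the functions $\partial_2\phi$ and $\partial_3\phi$ (whose joint image is that curve) depend on $x_1$ alone, and integrating gives $\phi=A_1(x_1)+A_2(x_1)x_2+A_3(x_1)x_3$ — conclusion (2). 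A more hands-on entry point to (b): if $\phi$ is affine in one variable, say $\deg_{x_3}\phi\le 1$ with $\phi=c_0(x_1,x_2)+c_1(x_1,x_2)x_3$, then $\det D^2\phi\equiv 0$ degenerates to the bivariate identity $(\nabla^{\perp}c_1)^{\top}(D^2c_1)\,\nabla^{\perp}c_1\equiv 0$ together with a second equation from the constant-in-$x_3$ term; the first says the level curves of $c_1$ are straight, hence parallel, hence $c_1$ is a function of one linear form, and substituting back into the second forces the near-affine form directly.

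The main obstacle is exactly this dichotomy: proving that the gradient image $\Sigma$ of a three-variable polynomial with $\det D^2\phi\equiv 0$ can only be a plane or a cylinder, i.e.\ ruling out cones, tangent developables and non-developable gradient images. This is the affine, low-dimensional incarnation of the Gordan--Noether phenomenon (and of the Jacobian-type obstructions alluded to above), and it is precisely the structural analysis of polynomial maps with everywhere-degenerate Jacobian carried out in \cite{SingularHessian2004}; absent a reproduction of that analysis one simply invokes their classification. A self-contained alternative specific to $n=3$ is to apply the classical fact going back to Hesse — a ternary form with vanishing Hessian determinant depends on at most two linear forms — to the top homogeneous part $\phi_d$ of $\phi$ (legitimate since $\det D^2\phi_d$ is the top-degree part of $\det D^2\phi$ and hence also vanishes), normalise $\phi_d=\phi_d(x_1,x_2)$, and then induct on $d$ to control the lower-order terms; but arranging this induction so that it terminates in exactly one of the two stated normal forms is itself the delicate step.
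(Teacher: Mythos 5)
The paper offers no independent proof of this lemma: it is quoted directly as the known classification of trivariate polynomials with identically vanishing Hessian determinant from \cite{SingularHessian2004}. Your proposal ultimately does the same — after correctly disposing of the generic-rank $\le 1$ cases, you defer the essential plane-versus-cylinder dichotomy for the gradient image to that reference — so the two coincide in substance, and your sketched self-contained alternative (Gordan--Noether applied to the top homogeneous part plus induction on degree) is, as you yourself note, not carried out.
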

The complete classification of polynomials $\phi\in \mathcal P_{n,d}$ with $\det D^2\phi\equiv 0$ is open for $n\ge 4$ (see \cite{SingularHessian2004,SingularHessian2018}). As a result, $Z(4)$ is open to us.

See Figure \ref{fig:implications}. By the main results of this paper, every statement to the left of (and including) $S(4)$ is known. Compared with \cite{LiYang2023}, the new results are $S(3),Z(3),U(3),S(4)$.

\subsection{Decoupling for homogeneous polynomials}
We can get better results if we restrict ourselves to the family of homogeneous polynomials.
\begin{defn}
    Let $n,d\ge 1$. We say that (uniform) homogeneous decoupling holds for $\mathcal P_{n,d}$, or $H(n)$ holds, if the same statement of Definition \ref{defn:graphical_polynomial_decoupling} holds, except that we additionally assume $\phi$ is homogeneous.
\end{defn}
Morally, we will show that $U(n)$ implies $H(n+1)$, namely, uniform decoupling for $n$-variate polynomials implies uniform decoupling for homogeneous $(n+1)$-variate polynomials. However, due to technical reasons, we can only prove a slightly weaker version. We first state an analogous assumption to $Z(n)$.
\begin{defn}\label{defn:IZ(n)}
    Fix $\eta:[-1,1]\to \R$ be $C^2$ and such that $\eta''\ne 0$. We say that $IZ(n)$ holds, if for every $\phi\in \mathcal P_{n,d}$ with $\det D^2 \phi\equiv 0$, $\delta\ll_{n,d} 1$ and $\eps\in (0,1]$, there exists a covering $\mathcal R_\delta$ of $[-1,1]^{n}$ by 
    parallelograms $R_\delta\sub [-2,2]^n$, such that the following holds:
    \begin{enumerate}
        \item Each $R\in\mathcal R_\delta$ has width at least $\delta$, and $\mathcal R_\delta$ has bounded overlap. In particular, $\#\mathcal R_\delta\lesssim \delta^{-n}$.
        \item Every $R_\delta\in \mathcal R_\delta$ in $(\phi,\delta)$-flat.
        \item Denote $\psi(x,y)=\eta(x)+\phi(y)$. For every $2\le p\le \frac{2(n+2)}{n}$, $[-1,1]^{n+1}$ can be $\psi$-$\ell^p(L^p)$ decoupled into 
        \begin{equation*}
            \{I\times R:I\in \mathcal I_\delta,R\in \mathcal R_\delta\}
        \end{equation*}
        at scale $\delta$ at cost $\lesssim \delta^{-\eps}$ as in Definition \ref{defn:graphical_decoupling}, where $\mathcal I_\delta$ denotes the tiling of $[-1,1]$ by intervals of length $\delta^{1/2}$.
    \end{enumerate}    
    All implicit constants and the overlap function in this definition are allowed to depend on $n,d,\eps,p,\norm{\eta}_{C^2},\inf |\eta''| $, but not on $\delta$ or the coefficients of $\phi$. 
\end{defn}
In general, proving $IZ(n)$ would require us to apply analogous properties to Lemma \ref{lem:zero_Hessian_paper}. In particular, we are able to prove the following proposition.
\begin{prop}\label{prop:IZ(3)}
    The statement $IZ(3)$ holds.
\end{prop}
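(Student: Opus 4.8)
The plan is to mirror the proof of $Z(3)$ (Theorem~\ref{thm:trivariate_decoupling}), carrying the extra summand $\eta(x)$ throughout; since $|\eta''|$ is bounded below on $[-1,1]$, the $x$-direction is genuinely curved and supplies one $\delta^{1/2}$-decoupled direction which, once peeled off at the start, does no further harm. First I would reduce to the normal forms of Lemma~\ref{lem:zero_Hessian_paper}: the affine terms of $\phi$ correspond to an affine transformation of $\R^5$ fixing the $x$-coordinate and preserving the shape $\psi=\eta(x)+(\text{function of }y)$, so we may assume $\phi$ has none; likewise the affine change of variables in the $y$-coordinates produced by Lemma~\ref{lem:zero_Hessian_paper} lifts to an affine transformation of $\R^5$ of the same shape, under which the decoupling constant, the width lower bound, and the flatness condition are controlled exactly as in the proof of Theorem~\ref{thm:trivariate_decoupling}. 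We are then left with two cases.

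In Case~(1), $\phi$ depends only on $y_1,y_2$, so $\psi=\eta(x)+\phi(y_1,y_2)$ is a cylinder in $y_3$; by cylindrical decoupling it suffices to decouple the graph of $G(x,y_1,y_2):=\eta(x)+\phi(y_1,y_2)$ over $[-1,1]^3$ into products $I\times\omega$ with $I\in\mathcal I_\delta$ and $\omega\sub[-1,1]^2$, and then take $\mathcal R_\delta=\{\omega\times[-1,1]\}$. I would first decouple $G$ in the $x$-variable into the intervals $I$ (this is a decoupling in the curved variable for a sum of the form (curve)$\oplus$(bivariate surface), valid for $2\le p\le\frac{2(2+3)}{2+1}=\frac{10}{3}$ at cost $\lesssim\delta^{-\eps}$ by the principles of \cite{LiYang2024}; here one uses that any $(G,\delta)$-flat parallelogram has $x$-extent $\lesssim\delta^{1/2}$ because $|\partial_x^2 G|=|\eta''|\gtrsim 1$). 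On each $I$, $\eta$ agrees with an affine function up to $O(\delta)$, so a second cylindrical decoupling reduces the remaining problem over $I\times[-1,1]^2$ to $\phi$-$\ell^p(L^p)$ decoupling of $[-1,1]^2$ into $\{\omega\}$, which holds for $2\le p\le 4$ at cost $\lesssim\delta^{-\eps}$ by $U(2)$. Composing, and checking that $\mathcal R_\delta$ has width $\ge\delta$, bounded overlap, $(\phi,\delta)$-flatness and cardinality $\lesssim\delta^{-2}$, settles Case~(1) for $2\le p\le\frac{10}{3}$.

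Case~(2), where $\psi=\eta(x)+A_1(y_1)+A_2(y_1)y_2+A_3(y_1)y_3$ is affine in $(y_2,y_3)$, is the substantial one, and I would handle it by the near-affine technique of \cite{LiYang2024} (cf.\ Theorem~\ref{thm:almost_affine}) with ruling directions $(y_2,y_3)$ and the pair $(x,y_1)$ as curved base. Partition the $(x,y_1)$-square into $\delta^{1/2}$-squares $I\times J$ and Taylor-expand $\eta$ at the centre of $I$ and each $A_i$ at the centre $y_{1,J}$ of $J$: on $I\times J\times[-1,1]^2$ the errors are $O(\delta)$ and $\psi$ becomes an affine function plus the bilinear twist $(y_1-y_{1,J})\bigl(A_2'(y_{1,J})y_2+A_3'(y_{1,J})y_3\bigr)$. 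If $(A_2'(y_{1,J}),A_3'(y_{1,J}))$ vanishes, or has length $\lesssim\delta^{1/2}$, the slab is already $O(\delta)$-flat; otherwise, writing $r$ for the $(A_2'(y_{1,J}),A_3'(y_{1,J}))$-direction in $(y_2,y_3)$ and $r^{\perp}$ for the orthogonal one, the crucial point is that the $r^{\perp}$-coefficient of $\psi$, as a function of $y_1$, has vanishing derivative at $y_{1,J}$ (its derivative there is $A_3'A_2'-A_2'A_3'=0$), so the slab is flat in $r^{\perp}$ up to $O(\delta)$. Hence, after a parabolic rescaling in $y_1$, the slab is, up to an affine map and $O(\delta)$, a $\delta^{1/2}$-cap of a hyperbolic paraboloid in $(y_1,r)$ times the flat directions $r^{\perp}$ and $x$; decoupling it into the final pieces then follows from cylindrical decoupling together with Bourgain--Demeter for the hyperbolic paraboloid (Theorem~\ref{thm:Bourgain_Demeter}), valid for $2\le p\le 6$ at cost $\lesssim\delta^{-\eps}$. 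Reassembling produces pieces $I\times R$ with $R$ a parallelogram in $(y_1,y_2,y_3)$; one verifies width $\ge\delta$ (using $|A_i'|\lesssim 1$), bounded overlap, $(\phi,\delta)$-flatness and cardinality $\lesssim\delta^{-3}$. Degenerate sub-cases (some $A_i$ affine or identically zero, making $\phi$ effectively lower-dimensional) reduce to Case~(1) or to direct computation. In all cases the range $2\le p\le\frac{2(3+2)}{3}=\frac{10}{3}$ is reached at cost $\lesssim\delta^{-\eps}$, which is what Definition~\ref{defn:IZ(n)} demands.

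The step I expect to be the main obstacle is Case~(2): arranging that the final decomposition has the rigid product form $\{I\times R\}$ required by Definition~\ref{defn:IZ(n)}, with the width lower bound, bounded overlap and $(\phi,\delta)$-flatness all uniform in the coefficients of $\phi$ — in particular when $|(A_2'(y_{1,J}),A_3'(y_{1,J}))|$ is small on some slabs — and controlling the affine change of variables supplied by Lemma~\ref{lem:zero_Hessian_paper}. These are precisely the technical hurdles already cleared in the proof of Theorem~\ref{thm:trivariate_decoupling}; $\eta$ enters only to furnish the extra $\delta^{1/2}$-decoupled $x$-direction, removed at the outset in each case.
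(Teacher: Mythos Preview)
Your proposal is correct and follows essentially the same route as the paper: reduce via Lemma~\ref{lem:zero_Hessian_paper} to the two normal forms, handle Type~I (your Case~(1)) by the argument underlying \cite[Proposition~13.1]{LiYang2025}, and handle Type~II (your Case~(2)) by rerunning the near-affine machinery of Theorem~\ref{thm:almost_affine} with the extra curved summand $\eta(x)$ carried along. One caution on your Case~(2) sketch: the sentence ``Partition the $(x,y_1)$-square into $\delta^{1/2}$-squares $I\times J$ and Taylor-expand'' reads as a single direct step, but reaching $\delta^{1/2}$-intervals in $y_1$ at cost $O_\eps(\delta^{-\eps})$ genuinely requires the Pramanik--Seeger iteration and degeneracy-locating structure of Section~\ref{sec:almost_affine} (the direction $r$ varies with $J$, so one cannot invoke Bourgain--Demeter globally); since you explicitly defer to Theorem~\ref{thm:almost_affine} for these ``technical hurdles,'' this is not a gap, but the sketch should not be mistaken for a one-shot argument.
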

The main theorem in this paper concerning decouping for homogeneous polynomials is as follows.
\begin{thm}\label{thm:decoupling_homo}
    For $n\ge 1$, $U(n)$ and $IZ(n)$ together imply $H(n+1)$. 
\end{thm}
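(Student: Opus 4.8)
The plan is to adapt the homogeneous-polynomial scheme of \cite[Section 12]{LiYang2025}, feeding in $IZ(n)$ in place of the dimension-specific input used there. Fix a homogeneous $\phi\in\mathcal P_{n+1,d}$; we may assume $d\ge 2$, since $d\le 1$ is affine and trivial. The first step is a homogeneity reduction to unit scale: decompose $[-1,1]^{n+1}$ into the dyadic annuli $\{\|x\|_\infty\sim 2^{-k}\}$ for $0\le k\lesssim\log(1/\delta)$ together with the inner ball $\{\|x\|_\infty\lesssim\delta\}$, on which $\phi$ is automatically $\delta$-flat because $d\ge 2$. On each annulus the anisotropic \emph{linear} rescaling $x\mapsto 2^{-k}x$ (together with $\xi_{n+2}\mapsto 2^{-kd}\xi_{n+2}$ in the graphing direction) preserves decoupling constants, so it suffices to decouple the graph of $\phi$ over $\{\|x\|_\infty\sim 1\}$. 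Splitting this set according to which coordinate realises the $\ell^\infty$ norm (finitely many symmetric cases, say $x_{n+1}\in[c,2]$) and applying the bi-Lipschitz substitution $y_i=x_i/x_{n+1}$, $t=x_{n+1}$ — legitimate for decoupling there by the change-of-variables principle of \cite{LiYang2024} — reduces $H(n+1)$ to the following \emph{cone-over-a-graph} problem: for every $g\in\mathcal P_{n,d}$, decouple the graph of $F(y,t):=t^{d}g(y)$ over $[-1,1]^{n}\times[\tfrac12,1]$ into $(F,\delta)$-flat parallelograms of width $\ge\delta$, at cost $\lesssim\delta^{-\eps}$, for all $2\le p\le\frac{2(n+3)}{n+1}$ (a range contained in $[2,4]$). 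The degeneracy of this reduced problem is governed by that of $\phi$ through the bordered-Hessian identity relating $\det D^{2}\phi$ to $\det\!\begin{pmatrix}D^{2}g&\nabla g\\(\nabla g)^{T}&\tfrac{d}{d-1}g\end{pmatrix}$, which picks out which of the two modes below applies.

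To decouple $F=t^{d}g(y)$ I would first decouple the base variable $y$ — by $U(n)$ when $g$ is nondegenerate, and by $Z(n)$ when $\det D^{2}g\equiv 0$ — obtaining flat parallelograms $R\subset[-1,1]^{n}$ on which $g$ agrees with an affine function $c_0+c_1\cdot(y-y_0)$ up to $O(\delta)$; crossing with $[\tfrac12,1]$ and using cylindrical decoupling, $[-1,1]^{n}\times[\tfrac12,1]$ splits into $\{R\times[\tfrac12,1]\}$ at cost $\lesssim\delta^{-\eps}$, although $F$ is not yet flat on these slabs. On the region $\{|g|\lesssim\delta\}$ one has $|F|\lesssim\delta$ and the slab is already flat; on the dyadic region $\{|g|\sim 2^{-j}\}$, $\delta<2^{-j}\le 1$, after rescaling the graphing coordinate by $2^{j}$ the surface $F$ has unit size, and — by the reparametrisation principle of \cite{LiYang2024}, the restriction of $\log$ to the range of the rescaled $F$ being bi-Lipschitz with bounded constants — its decoupling is equivalent, up to a $\delta^{-\eps}$ loss and a bounded change of flatness scale, to that of an \emph{additively split} model $\eta(t)+\phi_j(y)$, where $\eta$ is a fixed strictly convex profile coming from $d\log t$ and $\phi_j$ plays the role of an $n$-variate function whose Hessian is degenerate precisely when $g$ is (this is where the bordered-Hessian identity is used). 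When $\phi_j$ is nondegenerate, the desired decoupling of the slab into products of $\delta^{1/2}$-intervals in $t$ and flat boxes in $y$ follows from Bourgain--Demeter (Theorem \ref{thm:Bourgain_Demeter}) in the cone direction together with the base decoupling and trivial (flat/cylindrical) decoupling in $y$ — the available exponent $4$ comfortably covering the target $\frac{2(n+3)}{n+1}$. When $g$, hence $\phi_j$, is degenerate, this naive splitting no longer respects the flatness scale — the $Z(n)$-boxes $R$ are then too elongated and the cross term $s\cdot\phi_j(y)$ is no longer negligible — and one must decouple the cone direction $t$ and the degenerate base $y$ \emph{simultaneously}; this is exactly the content of $IZ(n)$ (Definition \ref{defn:IZ(n)}), which decouples $\eta(x)+\phi(y)$ with $\phi$ degenerate into products $I\times R$, $I\in\mathcal I_\delta$ and $R$ from a $Z(n)$-decoupling of $\phi$, at cost $\lesssim\delta^{-\eps}$. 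Composing the $O(\log(1/\delta))$ dyadic decouplings in $k$ and $j$, the $O(1)$ choices of cone axis, and the base--fibre split — each of cost $\lesssim\delta^{-\eps}$, using transitivity of $\ell^p(L^p)$ decoupling — then yields $H(n+1)$.

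The step I expect to be the main obstacle is precisely this passage from the multiplicative cone $t^{d}g(y)$ to the additive template $\eta(x)+\phi(y)$ of $IZ(n)$ with \emph{uniform} constants: one must control the reparametrisation of the graphing coordinate, the non-affine distortion produced by $y_i=x_i/x_{n+1}$, the cross terms $s\cdot\phi_j(y)$ and higher Taylor remainders, and the identification of $\phi_j$ with an actual polynomial of identically vanishing Hessian, all uniformly across the $\sim\log(1/\delta)$ dyadic scales and the finitely many choices of cone axis. This is why only ``$U(n)$ and $IZ(n)$'' is claimed rather than the morally correct ``$U(n)\implies H(n+1)$'': the degenerate base case genuinely needs extra structural input — ultimately a classification of degenerate $n$-variate polynomials in the spirit of Lemma \ref{lem:zero_Hessian_paper}, which is exactly what $IZ(n)$ (known only up to $n=3$ by Proposition \ref{prop:IZ(3)}) is designed to isolate.
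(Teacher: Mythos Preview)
Your overall architecture is right --- homogeneity reduction to a cone over a graph, then passage to an additive model $\eta(t)+\phi(y)$ to which $IZ(n)$ applies --- but the reduction step, which you yourself flag as the main obstacle, does not go through as written, and the paper takes a genuinely different route at exactly that point.

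The gap is twofold. First, composing with $\log$ in the graphing direction turns $t^{d}g(y)$ into $d\log t+\log(2^{j}g(y))$, so your $\phi_j(y)=\log(2^{j}g(y))$ is not a polynomial; $IZ(n)$ in Definition~\ref{defn:IZ(n)} requires $\phi\in\mathcal P_{n,d}$, and there is no mechanism in the paper to apply it to $\log g$. Worse, $D^{2}(\log g)=g^{-1}D^{2}g-g^{-2}\nabla g(\nabla g)^{T}$, so ``$\det D^{2}g\equiv 0$'' and ``$\det D^{2}\phi_j\equiv 0$'' are different conditions, and the bordered Hessian you quote is a third one (it governs $\det D^{2}\phi$ for the original homogeneous $\phi$). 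The inference ``$g$ degenerate, hence $\phi_j$ degenerate'' is therefore unjustified. Second, your upfront dichotomy ``$g$ nondegenerate'' versus ``$\det D^{2}g\equiv 0$'' omits the generic case where $\det D^{2}g$ vanishes on a proper subvariety; handling that mixed regime is the entire purpose of the degeneracy locating principle (Theorem~\ref{thm:degeneracy_locating_principle}), which does not appear in your scheme.

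The paper avoids the $\log$ trick altogether. After the reparametrisation $(rs,r,r^{d}P(s))$, it invokes a cone-adapted sublevel-set decoupling (Theorem~\ref{thm:homo_sublevel_set}) and the argument of \cite[Section~12.3]{LiYang2025} to pass to $(u,u^{\beta}t,u^{\beta}\psi(t))$ with $\beta=1/d$ and, crucially, $\psi\in\mathcal P_{n,d}$ a \emph{genuine polynomial}. The radial principle of \cite{LiYang2025,LiYang2024} then converts this cone to the additive form $(u,t,u^{\beta}+\psi(t))$, i.e.\ $\eta(u)+\psi(t)$ with $\eta''\sim 1$. No case split on $\psi$ is made at this stage; instead Theorem~\ref{thm:degeneracy_locating_principle} is applied to the family $\{\eta(u)+\psi(t):\psi\in\mathcal P_{n,d}\}$ with degeneracy determinant $H(\eta+\psi)=\det D^{2}\psi$, the required ingredients being supplied by $S(n)$ (from $U(n)$ via Theorem~\ref{thm:main_U_imply_S} and Lemma~\ref{lem:monotonicity}), $IZ(n)$, Bourgain--Demeter, and $\tilde U(n)\Longleftrightarrow U(n)$ (Corollary~\ref{cor:U_tilde_equivalent}) respectively. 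The essential difference from your proposal is that the passage to additive form happens \emph{before} any curvature analysis and lands in the polynomial class where $IZ(n)$ and the degeneracy locating principle are formulated; this is what makes the hypotheses exactly sufficient.
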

We will outline the proof of both Proposition \ref{prop:IZ(3)} and Theorem \ref{thm:decoupling_homo} in Section \ref{sec:homo_highdim}, using a similar argument to \cite[Section 12]{LiYang2025}. Indeed, in \cite{LiYang2025}, we proved $H(3)$ based on the two principles introduced in \cite{LiYang2023}, a general sublevel set decoupling which requires $U(1)$, and a uniform bivariate decoupling for ``pseudo-polynomials" which requires $U(2)$. Also, $IZ(2)$ follows from \cite[Proposition 13.1]{LiYang2025}.

\begin{cor}\label{cor:H(4)}
    The statement $H(4)$ holds.
\end{cor}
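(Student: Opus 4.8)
The plan is to obtain $H(4)$ as the single instance $n=3$ of Theorem \ref{thm:decoupling_homo}, so the whole argument reduces to checking that the two hypotheses of that theorem are available for $n=3$. Theorem \ref{thm:decoupling_homo} asserts that $U(n)$ together with $IZ(n)$ implies $H(n+1)$; taking $n=3$, it therefore suffices to invoke $U(3)$ and $IZ(3)$.

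For the first hypothesis, $U(3)$ is exactly part of Corollary \ref{cor:trivariate_decoupling}. Tracing this back, $U(3)$ is the $n=3$ case of Corollary \ref{cor:conj_holds}, which says that $U(n-1)$ and $Z(n)$ imply $U(n)$: here $U(2)$ is already known (from \cite{LiYang2024}) and $Z(3)$ is Theorem \ref{thm:trivariate_decoupling}, itself proved via the classification in Lemma \ref{lem:zero_Hessian_paper}. For the second hypothesis, $IZ(3)$ is precisely Proposition \ref{prop:IZ(3)}. Feeding $U(3)$ and $IZ(3)$ into Theorem \ref{thm:decoupling_homo} yields $H(4)$, and nothing further is required; in particular there is no real obstacle at the level of the corollary itself.

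Where the genuine work lives is upstream, in the two ingredients. First, $Z(3)$ — hence $U(3)$ — rests on the structural dichotomy for trivariate polynomials with $\det D^2\phi\equiv 0$ recorded in Lemma \ref{lem:zero_Hessian_paper} (from \cite{SingularHessian2004}); the substantive branch is the near-affine case $A_1(x_1)+A_2(x_1)x_2+A_3(x_1)x_3$, handled with the decoupling principles of \cite{LiYang2024}. Second, $IZ(3)$ requires running a variant of the argument in \cite[Section 12]{LiYang2025} for the lifted phase $\psi(x,y)=\eta(x)+\phi(y)$ with $\phi$ a degenerate trivariate polynomial, again leaning on the same classification together with the two principles. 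Once those are in hand — which, by the time Corollary \ref{cor:H(4)} is stated, they are — the corollary follows immediately by the single implication above.
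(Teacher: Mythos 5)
Your proof is correct and is exactly the paper's own argument: apply Theorem \ref{thm:decoupling_homo} with $n=3$, supplying $U(3)$ from Corollary \ref{cor:trivariate_decoupling} and $IZ(3)$ from Proposition \ref{prop:IZ(3)}. The additional tracing of where those ingredients come from is accurate but not needed for the corollary itself.
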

\begin{proof}
    It follows from Proposition \ref{prop:IZ(3)}, Corollary \ref{cor:trivariate_decoupling} and Theorem \ref{thm:decoupling_homo}.
\end{proof}

\subsection{Outline of the paper} 
In Section \ref{sec:sub_parallelogram} we deal with a technical issue of decoupling a sub-parallelogram. In Section \ref{sec:degeneracy_locating}  we invoke a special case of the degeneracy locating principle established in \cite{LiYang2024}. In Section \ref{sec:S_imply_U} we prove Theorem \ref{thm:main_SZ_imply_U}. In Section \ref{sec:almost_affine}, we prove Theorem \ref{thm:trivariate_decoupling}. In Section \ref{sec:U_imply_S}, we prove Theorem \ref{thm:main_U_imply_S}. In Section \ref{sec:homo_highdim}, we prove Proposition \ref{prop:IZ(3)} and Theorem \ref{thm:decoupling_homo}.

\subsection{Notation}\label{sec:notation}

We introduce some notation that will be used in this paper. 

\begin{enumerate}
    \item \label{item:big_O} We use the standard notation $a=O_M(b)$, or $|a|\lesssim_M b$ to mean that there is a constant $C$ depending on some parameter $M$, such that $|a|\leq Cb$. When the dependence on the parameter $M$ is unimportant, we may simply write $a=O(b)$ or $|a|\lesssim b$. Similarly, we define $\gtrsim$ and $\sim$.


\item Given a subset $M\sub \R^n$, we define the $\delta$-neighbourhood $N_\delta(M)\sub \R^n$ to be
    \begin{equation*}
N_\delta(M) := \{ x + c: x \in M, c\in \R^n,|c|<\delta\}.
\end{equation*}

    \item Given a continuous function $\phi:\Omega\sub \R^n\to \R$, we define the $\delta$-vertical neighbourhood $N^\phi_\delta(\Omega)\sub \R^{n+1}$ by    
    \begin{equation*}
N_\delta^\phi(\Omega): =\{ (x , \phi(x) + c): x \in \Omega, c \in [-\delta,\delta]\}.
\end{equation*}

\item We denote by $\mathcal P_{n,d}$ the collection of all $n$-variate real polynomials $\phi$ of degree at most $d$, such that $\max_{[-1,1]^n}|\phi(x)|\le 1$.

\end{enumerate}

\subsubsection{Notation on parallelograms}\label{sec:equivalence_objects}
Decoupling inequalities are formulated using parallelograms in $\R^n$. To deal with technicalities, it is crucial that we make clear some fundamental geometric concepts.

\begin{enumerate}
    \item A parallelogram $R\sub \R^n$ is defined to be a set of the form
    \begin{equation*}
        \{x+b\in \R^n:|x\cdot u_i|\le l_i\},
    \end{equation*}
    where $\{u_i:1\le i\le n\}$ is a basis of $\R^n$ consisting of unit vectors, $b\in \R^n$ and $l_i\ge 0$. If $\{u_i:1\le i\le n\}$ is in addition orthogonal, we say that $R$ is a rectangle. Unless otherwise specified, we always assume a parallelogram in $\R^n$ has positive $n$-volume, namely, $l_i>0$ for all $1\le i\le n$. We say that two parallelograms are disjoint if their interiors are disjoint. The {\it width} of a parallelogram is defined by its shortest dimension, namely, $\min_i l_i$.

    \item Let $R\sub \R^n$ be a parallelogram. We denote by $\lambda_R$ an affine bijection from $[-1,1]^n$ to $R$. (There is more than one choice of $\lambda_R$, but in our paper, any such a choice will work identically, since in decoupling inequalities, the orientations of such affine bijections do not make any difference.) We say that an affine bijection $\lambda$ on $\R^n$ is {\it bounded} by $C$ if $\lambda x=Ax+b$ where all entries of $A,b$ are bounded by $C$. 

    \item Unless otherwise specified, for a parallelogram $S\sub \R^n$ and $C>0$, we denote by $CS$ the concentric dilation of $S$ by a factor of $C$. Given $t\in \R^n$, we denote $S+t=\{s+t:s\in S\}$. Note that in this notation, we have $C(S+t)=CS+t$.

\item Given a subset $S\sub \R^n$, a parallelogram $R\sub \R^n$ and $C\ge 1$, we say that $S,R$ are $C$-equivalent if $C^{-1}R\sub S\sub CR$.  If the constant $C$ is unimportant, we simply say that $S$ is equivalent to $R$, or that $S$ is an {\it almost parallelogram}.

    \item For $0<\delta<1$, by a {\it tiling} of a parallelogram $R\sub \R^n$ by cubes of side length $\delta$, we mean a covering $\mathcal T$ of $R$ by translated copies $T$ of a cube of side length $\delta$, such that different $T$ and $T'$ from $\mathcal T$ have disjoint interiors, and that $T\sub 2R$.

    \item By the John ellipsoid theorem \cite{John}, every convex body in $\R^n$ is $C_n$-equivalent to a rectangle in $\R^n$, for some dimensional constant $C_n$. For this reason and in view of the enlarged overlap introduced right below, we often do not distinguish rectangles from parallelograms.

\end{enumerate}

\subsubsection{Enlarged overlap}

\begin{defn}\label{defn:enlarged_overlap}
By an {\it overlap function} we mean an increasing function $B:[1,\infty)\to [1,\infty)$. Given a finite family $\mathcal R$ of parallelograms in $\R^n$ and an overlap function $B$, we say $\mathcal R$ is $B$-overlapping, or $B$ is an overlap function of $\mathcal R$, if for every $\mu\ge 1$ we have
    \begin{equation*}
        \sum_{R\in \mathcal R}1_{\mu R}\le B(\mu).
    \end{equation*}
\end{defn}
Given $\delta>0$ and a family $\mathcal R$ of parallelograms depending on $\delta$ (and other factors such as $\eps>0$). We say that $\mathcal R$ is boundedly overlapping, if its overlap function $B$ does not depend on $\delta$ (but it might depend on $\eps$ and other factors).

\subsection{Acknowledgement}
The first author was supported by AMS-Simons Travel Grants. The second author was supported by the Croucher Fellowships for Postdoctoral Research. The authors thank Terence Tao for insightful discussions.

\section{Decoupling for a sub-parallelogram}\label{sec:sub_parallelogram}

For $\ell^2$ decoupling, the decoupling of $[-1,1]^n$ automatically implies the decoupling of a parallelogram $R\sub [-1,1]^n$, as there is no cardinality term on the right hand side of \eqref{eqn:defn_decoupling}. However, for $\ell^p$ decoupling when $p>2$, if we use the decoupling of $[-1,1]^n$ to trivially obtain the decoupling of $R$, then the cardinality term will be highly inefficient, since many rectangles from the decoupling of $[-1,1]^n$ do not intersect $R$. 

In our proofs, it is common for us to decouple a parallelogram $R\sub [-1,1]^n$; since we are in the case of families of polynomials which are rescaling invariant, it seems that we can always rescale $R$ to $[-1,1]^n$ and decouple the rescaled polynomial instead. However, since we are interested in width lower bounds of the decoupled rectangles, this issue becomes more subtle.

We first start with $\phi$-decoupling for polynomials $\phi\in \mathcal P_{n,d}$, by introducing a superficially stronger version $\tilde U(n)$ of $U(n)$ as in Definition \ref{defn:graphical_polynomial_decoupling}. Indeed, after a few propositions and theorems, we shall later see in Corollary \ref{cor:U_tilde_equivalent} that $U(n)$ is equivalent to $\tilde U(n)$.
\begin{defn}
Let $n,d\ge 1$. We say that $\tilde U(n)$ holds, if for every $\phi\in \mathcal P_{n,d}$, every $\delta\ll_{n,d} 1$, every $\eps\in (0,1]$ and every parallelogram $R\sub [-1,1]^n$ with width $w\ge \delta$, there exists a covering $\mathcal R_\delta$ of $R$ by 
    parallelograms $R_\delta\sub 2R$, such that the following holds:
    \begin{enumerate}
        \item Each $R\in\mathcal R_\delta$ has width at least $\delta$, and $\mathcal R_\delta$ has bounded overlap. In particular, $\#\mathcal R_\delta\lesssim \delta^{-n}$.
        \item Every $R_\delta\in \mathcal R_\delta$ in $(\phi,\delta)$-flat.
        \item For every $2\le p\le \frac{2(n+2)}{n}$, $R$ can be $\phi$-$\ell^p(L^p)$ decoupled into $\mathcal R_\delta$ at scale $\delta$ at cost $\lesssim \delta^{-\eps}$ as in Definition \ref{defn:graphical_decoupling}.
    \end{enumerate}
    All implicit constants and the overlap function in this definition are only allowed to depend on $n,d,\eps,p$.
\end{defn}
It is trivial to see that $\tilde U(n)\implies U(n)$. The more interesting part is the following partial converse.
\begin{prop}\label{prop:tilde_U(n)}
    For $n\ge 1$, $U(n)$ and $\tilde U(n-1)$ together imply $\tilde U(n)$.
\end{prop}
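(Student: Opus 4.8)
The plan is to reduce $\tilde U(n)$ to $U(n)$ by a rescaling argument, using $\tilde U(n-1)$ to handle the directions in which the parallelogram $R$ is already thin. Given $\phi\in\mathcal P_{n,d}$, a scale $\delta$, and a parallelogram $R\subseteq[-1,1]^n$ of width $w\ge\delta$, I would first apply an affine bijection $\lambda_R:[-1,1]^n\to R$ and consider the rescaled polynomial $\phi_R:=\phi\circ\lambda_R$. The subtlety flagged in Section~\ref{sec:sub_parallelogram} is that $\lambda_R$ need not be a dilation: $R$ may be long in some directions and short (comparable to $w$, possibly $\ll 1$) in others, so $\phi_R$, while still a polynomial of degree $\le d$, has been compressed anisotropically. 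If $w\gtrsim 1$, i.e. $R$ is comparable to a cube of sidelength $\sim 1$, then $\phi_R\in\mathcal P_{n,d}$ (up to harmless normalization of its sup norm) and $U(n)$ applied to $\phi_R$ at scale $\sim\delta/w\sim\delta$ directly produces the desired covering $\mathcal R_\delta$ of $R$, after pulling back by $\lambda_R$; one checks the width lower bound is preserved because $\lambda_R$ distorts lengths by a bounded factor in this regime, and $(\phi,\delta)$-flatness of the output rectangles transfers from $(\phi_R,\delta)$-flatness since flatness is an affine-invariant notion up to constants.

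The genuinely new case is $w\ll 1$. Here I would split the coordinates of $R$ into ``long'' directions (sidelength $\sim 1$) and ``short'' directions (sidelength between $\delta$ and, say, $\delta^{1/2}$ or more generally $\ll 1$). The strategy is a two-step decoupling: \emph{first} decouple $R$ in its long directions down to scale $w$ (or to the scale of the shortest long direction) by slicing $R$ into translates of a parallelogram $R'$ all of whose sidelengths are $\lesssim w^{1/2}$ or so — for each fixed value of the long variables on such a slice, the remaining problem is essentially $(n-1)$-dimensional or lower after freezing, but more cleanly one writes $\phi$ restricted to a long-direction slab as a polynomial in fewer effective variables and invokes $\tilde U(n-1)$; \emph{then}, on each resulting near-cube piece (now of width $\sim\delta$ in the rescaled picture), apply $U(n)$ as in the first paragraph. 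Concatenating the two decoupling inequalities via the triangle inequality for the $\ell^p$ norm, and bookkeeping the cardinality factors $(\#\mathcal R)^{1/2-1/q}$ carefully so that the exponents match up (this is exactly the inefficiency warned about at the start of Section~\ref{sec:sub_parallelogram}), gives the claimed bound $\lesssim\delta^{-\eps}$ at the combined cost, since each stage costs $\lesssim\delta^{-\eps/2}$. One must also verify that the final rectangles, obtained by intersecting the two families and pulling back, still have width $\ge\delta$ and bounded overlap, and are $(\phi,\delta)$-flat; flatness is the easiest since it only gets better under refinement, and bounded overlap follows from the bounded overlap of each stage plus a product estimate.

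The main obstacle I anticipate is the \emph{dimensional reduction} in the $w\ll 1$ case: writing the restriction of $\phi$ to a thin slab of $R$ honestly as an instance to which $\tilde U(n-1)$ applies is not literally a matter of freezing a variable, because the slab is thin but not a single point, so $\phi$ still depends on all $n$ variables on it. The correct move is probably a Taylor/affine-rescaling argument in the spirit of \cite[Section 2]{LiYang2023}: on a slab of width $\eta$ in one long direction, $\phi$ differs from a polynomial independent of that direction by an error of size $O(\eta)$, which after the vertical $\delta$-thickening inherent in $N_\delta^\phi$ is negligible once $\eta\lesssim\delta$; iterating this across the long directions peels them off one at a time, each peel reducing the effective number of variables by one and invoking $\tilde U(n-1)$ (or lower, via Lemma~\ref{lem:monotonicity}-type reasoning for $\tilde U$). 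Getting the error terms, the width lower bounds, and the overlap functions to all cooperate uniformly in $\delta$ and in the coefficients of $\phi$ — with no hidden dependence sneaking in through the anisotropic rescaling $\lambda_R$ — is where the real work lies; everything else is triangle inequality and careful constant-chasing.
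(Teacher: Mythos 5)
Your two-stage scheme is essentially the paper's proof: an intermediate decoupling at the width scale $w$ that reduces to $\tilde U(n-1)$ by discarding the short direction of $R$, followed by rescaling each resulting $(\phi,w)$-flat piece $R'$ to $[-1,1]^n$ (so that $\phi\circ\lambda_{R'}=Cw\psi+\text{affine}$ with $\psi\in\mathcal P_{n,d}$) and applying $U(n)$ at scale $w^{-1}\delta$, the pulled-back widths being $w\cdot w^{-1}\delta=\delta$. One correction to your third paragraph: it is the \emph{short} direction (of width $w$) that gets peeled off, not the long ones, and the resulting $O(w)$ error in approximating $\phi$ by $\phi|_T$ is absorbed because the first stage only asks for $(\phi,w)$-flatness --- you do not need, and could not afford, slabs of width $\lesssim\delta$ in the long directions. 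With that inversion fixed, the rest is the bookkeeping you describe.
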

Note that \cite{Yang2} proves that $\tilde U(1)$ holds, since it is a $\ell^2$ decoupling. Therefore, by Proposition \ref{prop:tilde_U(n)}, proving that $\tilde U(n)$ holds for all $n$ is equivalent to proving that $U(n)$ holds for all $n$. We also have the following corollary.
\begin{cor}\label{cor:U(2)}
    The statement $\tilde U(2)$ holds.
\end{cor}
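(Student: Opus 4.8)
\emph{Proof proposal.} The plan is to obtain this as an immediate consequence of Proposition \ref{prop:tilde_U(n)} applied with $n=2$. That proposition states that $U(n)$ and $\tilde U(n-1)$ together imply $\tilde U(n)$, so for $n=2$ we only need to supply two inputs: $U(2)$ and $\tilde U(1)$.

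The first input, $U(2)$, is already recorded in the excerpt: it holds by \cite[Theorem 5.22]{LiYang2024}. For the second input, $\tilde U(1)$, the point is that a one-dimensional $\phi$-decoupling is an $\ell^2(L^p)$ decoupling, and for $\ell^2$ decoupling there is no cardinality loss on the right-hand side of \eqref{eqn:defn_decoupling}. Hence the decoupling of $[-1,1]$ into short intervals provided by \cite{Yang2} can be transferred, after the standard affine rescaling, to any sub-interval $R\subseteq[-1,1]$ of width $\ge\delta$, while preserving the width lower bound on the output intervals (in fact one even gets width $\ge\delta^{1/2}$, as remarked for $U(1)$ in the excerpt); the full range $2\le p\le\frac{2(n+2)}{n}$ with $n=1$, i.e.\ $2\le p\le 6$, then follows from this $\ell^2(L^6)$ estimate together with H\"older's inequality for smaller $p$. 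This gives $\tilde U(1)$.

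Feeding $U(2)$ and $\tilde U(1)$ into Proposition \ref{prop:tilde_U(n)} then yields $\tilde U(2)$. I do not expect any genuine obstacle here: the corollary is a direct specialization of Proposition \ref{prop:tilde_U(n)} to a case where both hypotheses are already known. The only bookkeeping point is to check that the uniformity demanded in the definition of $\tilde U(2)$ — implicit constants and overlap functions depending only on $n,d,\eps,p$ and not on $\delta$ or the coefficients of $\phi$ — is inherited from the corresponding uniformity in $U(2)$ and $\tilde U(1)$ through the argument proving Proposition \ref{prop:tilde_U(n)}, which is automatic since that proof only invokes the two hypotheses as black boxes.
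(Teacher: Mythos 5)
Your proposal is correct and matches the paper's proof exactly: the paper also cites \cite[Theorem 5.22]{LiYang2024} for $U(2)$, \cite{Yang2} for $\tilde U(1)$ (noting, as you do, that the one-dimensional case is an $\ell^2$ decoupling and hence transfers to sub-intervals), and then applies Proposition \ref{prop:tilde_U(n)}. No issues.
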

\begin{proof}
    By \cite[Theorem 5.22]{LiYang2024}, $U(2)$ holds. By \cite[Theorem 1.4]{Yang2}, $\tilde U(1)$ holds. Then the result follows from Proposition \ref{prop:tilde_U(n)}.
\end{proof}
\fbox{Notation} All implicit constants below in this section are allowed to depend on $n,d$.

\begin{proof}[Proof of Proposition \ref{prop:tilde_U(n)}]
    Given $R\sub [-1,1]^n$ with width $w$, we first apply an intermediate decoupling at scale $w$ (every decoupling in this proof is $\ell^p(L^p)$ where $2\le p\le \frac{2(n+2)}{n}$). But at this scale, $\phi$ is indistinguishable with $\phi|_T$, where $T$ is the lower dimensional parallelogram obtained from $R$ by removing one shortest dimension of $R$. Applying $\tilde U(n-1)$, we can $\phi|_T$-decouple $T$ at scale $w$. This equivalently $\phi$-decouples $R$ into $(\phi,w)$-flat parallelograms $R'$ each having width at least $w$.

    We then rescale $R'$ to $[-1,1]^n$, and note that 
    \begin{equation*}
        \phi\circ \lambda_{R'}=Cw \psi+\text{affine terms},
    \end{equation*}
    for some $C\lesssim 1$ and $\psi\in \mathcal P_{n,d}$. Applying $U(n)$ at scale $w^{-1}\delta$, we may $\psi$-decouple $[-1,1]^n$ into $(\psi,w^{-1}\delta)$-flat parallelograms $R''\sub [-2,2]^n$ of width at least $w^{-1}\delta$. Rescaling back, we have $\phi$-decoupled $R'$ into $(\phi,O(\delta))$-flat parallelograms $\lambda_{R'}(R'')\sub 2R'\sub 2R$ with width at least $ww^{-1}\delta=\delta $. We may further apply a trivial partition of $\lambda_{R'}(R'')$ into $O(1)$ parallelograms of width still at least $\delta$, such that they are $(\phi,\delta)$-flat (see \cite[Proposition 5.24]{LiYang2024} for details).
\end{proof}

We next consider an analogue of Proposition \ref{prop:tilde_U(n)} for the sublevel set decoupling in Definition \ref{defn:sublevel_set_decoupling}. Similarly to the discussion of $\tilde U(n)$, in Corollary \ref{cor:U_tilde_equivalent}, we see that $S(n)$ is equivalent to $\tilde S(n)$.

\begin{defn}\label{defn:sublevel_set_decoupling_tilde}
    Let $n,d\ge 1$. We say that $\tilde S(n)$ holds, if for every $\phi\in \mathcal P_{n,d}$, every $\delta\ll_{n,d} 1$, every $\eps\in (0,1]$ and every parallelogram $R\sub [-1,1]^n$ with width at least $\delta$, there exists a covering $\mathcal S_\delta$ of the sublevel set $\{x\in R:|\phi(x)|\le \delta\}$ by parallelograms $S_\delta\sub 2R$, such that the following holds:
    \begin{enumerate}
        \item Each $S_\delta\in\mathcal S_\delta$ has width at least $\delta$, and $\mathcal S_\delta$ has bounded overlap. In particular, $\#\mathcal S_\delta\lesssim_\eps \delta^{-n}$.
        \item $|\phi(x)|\lesssim \delta$ for every $x\in S_\delta\in \mathcal S_\delta$.
        \item For every $2\le p\le \frac{2(n+1)}{n-1}$, the sublevel set $\{x\in R:|\phi(x)|\le\delta\}$ can be $\ell^p(L^p)$ decoupled into $\mathcal S_\delta$ as in Definition \ref{defn:decoupling} at cost $O_\eps (\delta^{-\eps})$.
    \end{enumerate}
    All implicit constants and the overlap function in this definition are only allowed to depend on $n,d,\eps,p$.
\end{defn}

We first settle the case $\tilde S(1)$. We present the proof here since it uses a little trick we call $\delta$-thickening, which will be used several times in this paper.
\begin{prop}\label{prop:delta_thickening}
    The statement $\tilde S(1)$ holds.
\end{prop}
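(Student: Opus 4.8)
\textbf{Proof plan for Proposition \ref{prop:delta_thickening} ($\tilde S(1)$ holds).}

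The plan is to reduce the sublevel set of a univariate polynomial to a controlled union of intervals by combining the fundamental theorem of algebra with a $\delta$-thickening trick. Given $\phi\in \mathcal P_{1,d}$, a scale $\delta\ll_d 1$, and an interval $R=[a,b]\sub[-1,1]$ of length $w\ge\delta$, I first rescale: write $\phi\circ\lambda_R = c\,\psi + (\text{constant})$ where $\psi\in\mathcal P_{1,d}$ and $c\lesssim 1$, reducing matters (after renaming) to $R=[-1,1]$, at the expense of replacing the threshold $\delta$ by $\sim w^{-1}\delta$, which is still $\ge\delta$; I would keep careful bookkeeping so that the final intervals have width at least $\delta$ once rescaled back, exactly as in the proof of Proposition \ref{prop:tilde_U(n)}. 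Now $\psi$ has at most $d$ real roots $\xi_1,\dots,\xi_m$ (counted with multiplicity), so by factoring $\psi(x) = c_0\prod_{j}(x-\xi_j)\cdot q(x)$ with $q$ having no real roots on $[-2,2]$ hence $|q|\sim 1$ there, the sublevel set $\{x\in[-1,1]:|\psi(x)|\le\delta'\}$ (with $\delta'\sim w^{-1}\delta$) is contained in $\bigcup_j\{x:|x-\xi_j|\le C(\delta')^{1/d}\}$; more precisely one gets a union of $O_d(1)$ intervals clustered near the roots, whose total measure is $O_d((\delta')^{1/d})$.

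The next step is the $\delta$-thickening: rather than trying to decouple the sublevel set itself (which is a union of tiny intervals whose lengths depend delicately on root multiplicities), I enlarge each of these $O_d(1)$ intervals to length exactly $\max(\ell_j,\delta')$ where $\ell_j$ is its original length, producing a collection $\mathcal S$ of $O_d(1)$ intervals $S$ each of length $\ge\delta'$, with bounded overlap (after discarding redundant ones), whose union contains the sublevel set. Since the sublevel set is covered by $O_d(1)$ intervals and decoupling into $O_d(1)$ pieces is trivial with constant $O_d(1)$ (by the triangle inequality, since the cardinality factor $(\#\mathcal S)^{1/2-1/p}\gtrsim 1$ absorbs it), property (3) of Definition \ref{defn:sublevel_set_decoupling_tilde} holds with cost $O_d(1)\le O_\eps(\delta^{-\eps})$ for any $\eps>0$. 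Properties (1) and (2) are immediate from the construction: each $S$ has length $\ge\delta'$, hence after rescaling back by $\lambda_R$ has width $\ge w\cdot w^{-1}\delta=\delta$ (here I use that $\lambda_R$ scales lengths by $\sim w$), $\#\mathcal S=O_d(1)\le\delta^{-1}$, and $|\phi(x)|\lesssim\delta$ on each $S$ follows because thickening a sublevel interval of a degree $\le d$ polynomial by a factor comparable to its own length only increases $|\phi|$ by a bounded factor (as $|\psi|$ is comparable on an interval and its $O(1)$-dilate when that dilate stays in $[-2,2]$ and the interval is between two consecutive roots) — this last point requires a short lemma on polynomials but is standard.

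The main obstacle, and the only place needing genuine care, is controlling $|\phi|$ on the thickened intervals: when a root $\xi_j$ has high multiplicity $k$, the original sublevel interval around it has length $\sim(\delta')^{1/k}$, and one must verify that dilating it by a bounded factor keeps $|\psi|\lesssim\delta'$ rather than blowing up by a power. The clean way to handle this is to instead define the thickened intervals directly as $\{x:|x-\xi_j|\le 2(\delta')^{1/k_j}\}$ where $k_j$ is the multiplicity of $\xi_j$ (grouping roots and using that distinct real roots of a bounded-coefficient polynomial that are within $(\delta')^{1/d}$ of each other can be treated as a single cluster), and then check $|\psi(x)|\lesssim (\delta')^{\#\{\text{roots within }O((\delta')^{1/d})\}/d}\cdot\dots$; after unwinding, the bound $|\psi|\lesssim\delta'$ on each such interval holds because the contribution of the other $d-k_j$ factors is $O(1)$ and the nearby cluster contributes $\le(2(\delta')^{1/k_j})^{k_j}=O(\delta')$. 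I would isolate this as the key computation and present it as the heart of the proof, with everything else being the routine rescaling and triangle-inequality bookkeeping.
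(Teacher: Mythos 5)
Your overall strategy (fundamental theorem of algebra, thicken the resulting intervals, then decouple $O_d(1)$ pieces trivially) is the paper's, but the way you locate and size the intervals has a genuine gap. You claim that $\{x\in[-1,1]:|\psi(x)|\le\delta'\}$ is contained in $\bigcup_j\{|x-\xi_j|\le C(\delta')^{1/d}\}$ and that the rootless factor satisfies $|q|\sim 1$ on $[-2,2]$. Both fail for polynomials in $\mathcal P_{1,d}$, which carry only an \emph{upper} bound on coefficients: for $\psi(x)=\eps x$ the sublevel set is $\{|x|\le \delta'/\eps\}$, which can be all of $[-1,1]$ and is not covered by a small neighbourhood of the single root; and $q(x)=\eps(x^2+1)$ has no real roots but is nowhere $\sim 1$. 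So your proposed cover can simply miss most of the sublevel set, and the multiplicity/cluster computation you flag as the heart of the proof is built on this false localization.

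The fix — and the paper's entire proof — is much shorter and avoids roots of $\psi$ itself. The set $\{x\in R:|\phi(x)|<\delta\}=\{\phi<\delta\}\cap\{\phi>-\delta\}$ has boundary points among the at most $2d$ real roots of $\phi\mp\delta$, so it is a union of $O_d(1)$ intervals $I$ of whatever lengths they happen to have. Replace each $I$ by the \emph{additive} thickening $N_\delta(I)$ (not a multiplicative dilation): this forces width $\ge\delta$, stays inside $2R$ since $R$ has width $\ge\delta$, and since $\|\phi'\|_{L^\infty([-2,2])}\lesssim_d 1$ the value of $|\phi|$ increases by at most $O_d(\delta)$, giving $|\phi|\lesssim\delta$ on $N_\delta(I)$ with no case analysis on multiplicities. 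Decoupling into $O_d(1)$ intervals is then trivial. Note also that your preliminary rescaling of $R$ to $[-1,1]$ with threshold $w^{-1}\delta$ is unnecessary here (and the renormalization by $w$ is unjustified for a sublevel set unless you separately know $|\phi|\lesssim w$ on $R$); one can work directly on $R$.
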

\begin{proof}[Proof of Proposition \ref{prop:delta_thickening}]
By the fundamental theorem of algebra, for $R\sub [-1,1]$, the sublevel set $\{x\in R:|\phi(x)|<\delta\}$ is a union of $O_d(1)$ many intervals $I$. A priori, the lengths of $I$ may be small, but we may simply replace $I$ by $N_\delta(I)$, on each of which we have $|\phi|\lesssim \delta$.
\end{proof}
Obviously, $\tilde S(n)$ implies $S(n)$. We also have the following partial converse. 
\begin{prop}\label{prop:tilde_S(n)}
    For $n\ge 1$, $S(n)$ and $\tilde S(n-1)$ together imply $\tilde S(n)$.
\end{prop}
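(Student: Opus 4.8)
The plan is to mimic the proof of Proposition \ref{prop:tilde_U(n)}, running an intermediate decoupling at the width scale followed by a rescaling and an application of $S(n)$. Given $\phi\in\mathcal P_{n,d}$ and a parallelogram $R\sub[-1,1]^n$ of width $w\ge\delta$, let $T$ be the $(n-1)$-dimensional parallelogram obtained by deleting one shortest direction of $R$. At scale $w$, the function $\phi$ is indistinguishable (up to $O(w)$ in the vertical direction, which is harmless for an $L^p$ sublevel set statement after $\delta$-thickening) from its restriction $\phi|_T$; more precisely, the sublevel set $\{x\in R:|\phi(x)|\le w\}$ is contained in an $O(w)$-neighbourhood inside $R$ of the sublevel set of $\phi|_T$ in $T$. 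We apply $\tilde S(n-1)$ to $\phi|_T$ at scale $w$ to decouple $\{x\in T:|\phi|_T(x)|\le w\}$ into parallelograms $S'\sub 2T$ of width at least $w$; thickening each $S'$ back up in the missing direction produces a cover of $\{x\in R:|\phi(x)|\le w\}\supseteq\{x\in R:|\phi(x)|\le\delta\}$ by parallelograms $R'\sub 2R$ of width at least $w$, on each of which $|\phi|\lesssim w$.

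Next I would rescale each $R'$ to $[-1,1]^n$ via $\lambda_{R'}$ and write
\begin{equation*}
    \phi\circ\lambda_{R'}=Cw\,\psi+\text{affine terms},
\end{equation*}
where $C\lesssim 1$ and $\psi\in\mathcal P_{n,d}$ (the affine terms, of size $O(w)$, come from the Taylor expansion on a $(\phi,w)$-flat rescaled box — here I would need a small argument that the sublevel set $\{|\phi|\le w\}$ behaves like a flat box, or alternatively work directly with the sublevel set of $\psi$). On $[-1,1]^n$ we have $\{x:|\phi\circ\lambda_{R'}(x)|\le\delta\}\subseteq\{x:|\psi(x)|\lesssim w^{-1}\delta\}$ after absorbing the affine terms into a $\delta$-thickening. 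We then apply $S(n)$ to $\psi$ at scale $w^{-1}\delta$, decoupling this sublevel set into parallelograms $S''\sub[-2,2]^n$ of width at least $w^{-1}\delta$ with $|\psi|\lesssim w^{-1}\delta$ on each. Rescaling back by $\lambda_{R'}$ yields a decoupling of $\{x\in R':|\phi(x)|\le\delta\}$ into parallelograms $\lambda_{R'}(S'')\sub 2R'\sub 2R$ of width at least $w\cdot w^{-1}\delta=\delta$, on each of which $|\phi|\lesssim w\cdot w^{-1}\delta=\delta$. Composing the intermediate decoupling with these, and noting that the $\ell^p(L^p)$ decoupling constants multiply while the range $2\le p\le\frac{2(n+1)}{n-1}$ is respected at both stages, gives the full cover $\mathcal S_\delta$ of $\{x\in R:|\phi(x)|\le\delta\}$. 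Bounded overlap and the count $\#\mathcal S_\delta\lesssim\delta^{-n}$ follow from transferring the overlap functions through the affine maps, as the number of $R'$ is $O(w^{-n})$ and each contributes $O((w^{-1}\delta)^{-n})$ pieces. A final trivial subpartition ensures each final parallelogram has width at least $\delta$ exactly and stays inside $2R$.

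The main obstacle I anticipate is the same one that is slightly glossed over in Proposition \ref{prop:tilde_U(n)}: controlling the affine error terms produced by rescaling. For the decoupling statement $\tilde U(n)$ one has a clean notion of $(\phi,w)$-flat parallelogram from \cite{LiYang2024}, but here the relevant object is a \emph{sublevel set} $\{|\phi|\le w\}$ rather than a flat box, so I would first need to verify that after the $\tilde S(n-1)$ step each piece $R'$ can be taken to be a genuine (almost-)parallelogram on which $\phi$ differs from an affine function by $O(w)$ — essentially that the sublevel set localizes to flat boxes. The cleanest route is probably to combine the intermediate $\tilde S(n-1)$ decoupling with an auxiliary application of $\delta$-thickening and the flat-box reduction from \cite[Proposition 5.24]{LiYang2024}, exactly as in the proof of Proposition \ref{prop:tilde_U(n)}; once one commits to working with the enlarged sublevel set $\{|\phi|\lesssim w\}$ and absorbing all lower-order terms into $O(\delta)$-thickenings, the bookkeeping of constants, widths, and overlap functions is routine.
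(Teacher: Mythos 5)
Your route is the same as the paper's: an intermediate decoupling at the width scale $w$ via $\tilde S(n-1)$ applied to $\phi|_T$, followed by rescaling each piece $R'$ to $[-1,1]^n$ and applying $S(n)$ at scale $w^{-1}\delta$, with the widths multiplying back to $\delta$. The one place where your write-up goes wrong — and which you correctly flag as your ``main obstacle'' — is the rescaling identity. You write $\phi\circ\lambda_{R'}=Cw\,\psi+\text{affine terms}$ and then try to absorb $O(w)$-sized affine terms into a $\delta$-thickening; as written this step fails, since when $\delta\ll w$ the containment $\{|\phi\circ\lambda_{R'}|\le\delta\}\subseteq\{|\psi|\lesssim w^{-1}\delta\}$ is destroyed by affine terms of size $O(w)$, and thickening the \emph{set} cannot compensate for modifying the \emph{function} whose sublevel set you are tracking.

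The resolution is that no affine terms arise at all, and no flat-box reduction is needed. The conclusion of the intermediate step is not flatness but the sublevel-set bound $|\phi|\lesssim w$ on each $R'$ (item (2) of $\tilde S(n-1)$ at scale $w$). Hence $\max_{[-1,1]^n}|\phi\circ\lambda_{R'}|\lesssim w$, so one may set $\psi:=(Cw)^{-1}\phi\circ\lambda_{R'}\in\mathcal P_{n,d}$ directly — the normalisation defining $\mathcal P_{n,d}$ is exactly the sup-norm bound, which is what the sublevel-set hypothesis provides, in contrast to the $\tilde U$ setting where one normalises by subtracting the affine part of a $(\phi,w)$-flat piece. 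With $\phi\circ\lambda_{R'}=Cw\,\psi$ exactly, the sublevel sets correspond on the nose, $\{|\phi\circ\lambda_{R'}|\le\delta\}=\{|\psi|\le (Cw)^{-1}\delta\}$, and the rest of your argument (applying $S(n)$ at scale $w^{-1}\delta$, rescaling back, composing decoupling constants, and transferring overlap functions) goes through as you describe. So your anticipated obstacle dissolves once the correct normalisation is used; otherwise the proposal matches the paper's proof.
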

Therefore, by Proposition \ref{prop:tilde_S(n)}, proving that $\tilde S(n)$ holds for all $n$ is equivalent to proving that $S(n)$ holds for all $n$.
\begin{proof}[Proof of Proposition \ref{prop:tilde_S(n)}]
    Given $R\sub [-1,1]^n$ with width $w$, we first apply an intermediate decoupling at scale $w$ (every decoupling in this proof is $\ell^p(L^p)$ where $2\le p\le \frac{2(n+1)}{n-1}$). But at this scale, $\phi$ is indistinguishable with $\phi|_T$, where $T$ is the lower dimensional parallelogram obtained from $R$ by removing one shortest dimension of $R$. Applying $\tilde S(n-1)$, we can decouple $T$ at scale $w$. This equivalently decouples $R$ into parallelograms $R'$ each having width equal to $w$, on each of which we have $|\phi|\lesssim w$.

    It remains to decouple $\{x\in R':|\phi(x)|<\delta\}$ into parallelograms on which $|\phi|\lesssim \delta$.

    We then rescale $R'$ to $[-1,1]^n$, and note that 
    \begin{equation*}
        \phi\circ \lambda_{R'}=Cw \psi,
    \end{equation*}
    for some $C\lesssim 1$ and $\psi\in \mathcal P_{n,d}$. Applying $S(n)$ at scale $w^{-1}\delta$, we may decouple $[-1,1]^n$ into parallelograms $R''\sub [-2,2]^n$ of width at least $w^{-1}\delta$, on each of which we have $|\psi|\lesssim w^{-1}\delta$. Rescaling back, we have decoupled $R'$ into parallelograms $\lambda_{R'}(R'')\sub 2R'\sub 2R$ with width at least $ww^{-1}\delta=\delta $, on each of which we have $|\phi|\lesssim \delta$.
\end{proof}

\begin{cor}\label{cor:S(2)}
    The statement $\tilde S(2)$ holds.
\end{cor}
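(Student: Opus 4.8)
The statement to prove is Corollary \ref{cor:S(2)}: $\tilde S(2)$ holds.

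The proof is immediate from the machinery just developed. By Proposition \ref{prop:tilde_S(n)} with $n=2$, the statements $S(2)$ and $\tilde S(1)$ together imply $\tilde S(2)$. We have $\tilde S(1)$ by Proposition \ref{prop:delta_thickening}, and $S(2)$ by \cite[Theorem 5.23]{LiYang2024} (as noted in the discussion following Definition \ref{defn:sublevel_set_decoupling}). So the plan is simply to invoke these three results in sequence.

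The only ``obstacle'' is bookkeeping: one should double-check that the cited $S(2)$ result is stated in exactly the form of Definition \ref{defn:sublevel_set_decoupling} (covering of the sublevel set by boundedly overlapping parallelograms of width $\gtrsim\delta$, with the stated decoupling cost over the full range $2\le p\le \frac{2(n+1)}{n-1}=6$), and similarly that Proposition \ref{prop:delta_thickening} indeed yields $\tilde S(1)$ with the required width lower bound after $\delta$-thickening. Both checks are routine given the statements already in the paper. Here is the proof:

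\begin{proof}
By \cite[Theorem 5.23]{LiYang2024}, $S(2)$ holds, and by Proposition \ref{prop:delta_thickening}, $\tilde S(1)$ holds. The result now follows from Proposition \ref{prop:tilde_S(n)} applied with $n=2$.
\end{proof}
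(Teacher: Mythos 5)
Your proof is correct and is identical to the paper's: both invoke $S(2)$ from \cite[Theorem 5.23]{LiYang2024}, $\tilde S(1)$ from Proposition \ref{prop:delta_thickening}, and conclude via Proposition \ref{prop:tilde_S(n)} with $n=2$.
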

\begin{proof}
    By \cite[Theorem 5.23]{LiYang2024}, $S(2)$ holds. By Proposition \ref{prop:delta_thickening}, $\tilde S(1)$ holds. Then the result follows from Proposition \ref{prop:tilde_S(n)}.
\end{proof}

We may also slightly upgrade Lemma \ref{lem:trivial_implications}.
\begin{lem}\label{lem:trivial_implications_tilde}
    For $n\ge 1$, $S(n+1)\implies \tilde U(n)$.
\end{lem}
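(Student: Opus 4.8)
The plan is to run the substitution $\Phi(x,y)=y-\phi(x)$ that underlies the remark ``$S(n+1)\implies U(n)$'' after Lemma \ref{lem:trivial_implications}, but now at the level of sub-parallelograms, which forces the use of $\tilde S(n+1)$ in place of $S(n+1)$. The first thing to observe is that $S(n+1)$ already implies $\tilde S(n+1)$: by Lemma \ref{lem:monotonicity}, $S(n+1)$ implies $S(m)$ for every $1\le m\le n+1$, and then, starting from $\tilde S(1)$ (Proposition \ref{prop:delta_thickening}), repeated application of Proposition \ref{prop:tilde_S(n)} yields $\tilde S(2),\dots,\tilde S(n+1)$ in turn. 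Hence it suffices to prove $\tilde S(n+1)\implies\tilde U(n)$.

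To this end, fix $\phi\in\mathcal P_{n,d}$, $\delta\ll_{n,d}1$, $\eps\in(0,1]$, and a parallelogram $R\subseteq[-1,1]^n$ of width $w\ge\delta$. Set $\hat R:=R\times[-1-\delta,1+\delta]\subseteq[-2,2]^{n+1}$; its axes are those of $R$ together with the last coordinate axis, so it is a genuine parallelogram of width $\min(w,2+2\delta)\ge\delta$. Since $\max_{[-1,1]^n}|\phi|\le1$, for $(x,y)\in\hat R$ one has $|y-\phi(x)|\le\delta$ precisely when $(x,y)\in N_\delta^\phi(R)$. Pick a constant $c_0=c_0(n,d)\in(0,1]$ so that, after the harmless anisotropic rescaling carrying $[-1,1]^{n+1}$ onto $[-1,1]^n\times[-2,2]$, the polynomial $c_0(y-\phi(x))$ becomes an element of $\mathcal P_{n+1,d}$, and apply $\tilde S(n+1)$ to it on the corresponding rescaled copy of $\hat R$, at scale $\sim\delta$. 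Undoing the rescaling, we obtain a boundedly overlapping covering of $N_\delta^\phi(R)$ by parallelograms $S\subseteq 2\hat R$, each of width $\gtrsim\delta$, on each of which $|y-\phi(x)|\lesssim\delta$, which $\ell^p(L^p)$-decouple $N_\delta^\phi(R)$ at cost $\lesssim\delta^{-\eps}$ for every $2\le p\le\frac{2(n+2)}{n}=\frac{2((n+1)+1)}{(n+1)-1}$, the endpoint being exactly the one furnished by $\tilde S(n+1)$.

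It remains to translate this back into a graphical decoupling of $\phi$. For each $S$ let $\omega_S$ be its projection to the $x$-variables, a parallelogram with $\omega_S\subseteq 2R$. The bound $|y-\phi(x)|\lesssim\delta$ on $S$ forces the $y$-fibres of $S$ to have length $\lesssim\delta$, so $S$ is contained in $N_{O(\delta)}^\phi(\omega_S)$, and after partitioning each $\omega_S$ into $O(1)$ pieces of width still $\ge\delta$ so as to make them $(\phi,\delta)$-flat (exactly as in the proof of Proposition \ref{prop:tilde_U(n)}, using \cite[Proposition 5.24]{LiYang2024}) it becomes equivalent to one. Taking $\mathcal R_\delta$ to be the resulting collection of bases $\omega_S$, the covering property $N_\delta^\phi(R)\subseteq\bigcup_S S$ gives $R\subseteq\bigcup\mathcal R_\delta$; the counting bound sharpens to $\#\mathcal R_\delta\lesssim\delta^{-n}$ since each $\omega_S\subseteq 2R$ has $n$-volume $\gtrsim\delta^n$ and they boundedly overlap; and, by Definition \ref{defn:graphical_decoupling}, the sublevel-set decoupling of $N_\delta^\phi(R)$ into the $S$'s is a $\phi$-$\ell^p(L^p)$ decoupling of $R$ into $\mathcal R_\delta$ at scale $\delta$ at cost $\lesssim\delta^{-\eps}$, with all constants depending only on $n,d,\eps,p$. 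This is precisely $\tilde U(n)$. The only step that is more than bookkeeping is this last translation between a parallelogram $S\subseteq\R^{n+1}$ with $|y-\phi(x)|\lesssim\delta$ and a vertical $\delta$-slab $N_{O(\delta)}^\phi(\omega_S)$ over a $(\phi,\delta)$-flat base; this is the same sublevel-set/graph correspondence used implicitly for ``$S(n+1)\implies U(n)$'', and the care required amounts to tracking widths and the containment $\omega_S\subseteq 2R$ through the rescalings.
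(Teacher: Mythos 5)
Your first step --- deriving $\tilde S(n+1)$ from $S(n+1)$ by combining Lemma \ref{lem:monotonicity}, Proposition \ref{prop:delta_thickening} and an iteration of Proposition \ref{prop:tilde_S(n)} --- is correct. But the second half, the direct implication $\tilde S(n+1)\implies\tilde U(n)$ via $\Phi(x,y)=y-\phi(x)$, has a genuine gap at exactly the step you describe as ``more than bookkeeping''. What $\tilde S(n+1)$ hands you is a family of parallelepipeds $S\subseteq\R^{n+1}$ of width $\ge\delta$ on which $|y-\phi(x)|\lesssim\delta$; to convert this into a $\phi$-decoupling in the sense of Definition \ref{defn:graphical_decoupling} you must show that each $N_{O(\delta)}^\phi(\omega_S)$ is equivalent to a parallelogram, i.e.\ that $\omega_S$ is $(\phi,O(\delta))$-flat. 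This does not follow from the sublevel-set condition alone, and it is not what \cite[Proposition 5.24]{LiYang2024} does: that proposition, as used in the proof of Proposition \ref{prop:tilde_U(n)}, upgrades a set already known to be $(\phi,O(\delta))$-flat to a $(\phi,\delta)$-flat one by an $O(1)$ partition. You would need a separate argument, e.g.\ using convexity of $S$ to get $|\phi(\tfrac{x_1+x_2}{2})-\tfrac12(\phi(x_1)+\phi(x_2))|\lesssim\delta$ for $x_1,x_2\in\omega_S$, followed by an equivalence-of-norms argument on the finite-dimensional space of polynomials of degree $\le d$ modulo affine functions, to conclude that $\phi$ is within $O_{n,d}(\delta)$ of an affine function on $\omega_S$. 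Two further unaddressed points: the projection of a parallelepiped in $\R^{n+1}$ onto $\R^n$ is a zonotope, not in general a parallelogram (fixable by John's theorem), and its width can be far smaller than $\delta$ even though $S$ has width $\ge\delta$ (a nearly vertical $S$ projects to something thin), so item (1) of $\tilde U(n)$ requires a $\delta$-thickening together with re-checks of bounded overlap and flatness.

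The paper sidesteps all of this. Its proof is a three-line induction on $n$: the base case $\tilde U(1)$ is from \cite{Yang2}; for the inductive step, $S(n+1)\implies U(n)$ by Lemma \ref{lem:trivial_implications}, $S(n+1)\implies S(n)\implies\tilde U(n-1)$ by Lemma \ref{lem:monotonicity} and the induction hypothesis, and then Proposition \ref{prop:tilde_U(n)} yields $\tilde U(n)$. In other words, the sublevel-set-to-graph translation is only ever invoked on the full cube, where it is already packaged into Lemma \ref{lem:trivial_implications}, and the passage to sub-parallelograms is delegated entirely to Proposition \ref{prop:tilde_U(n)}. If you wish to keep your route, you must either supply the flatness and width arguments above or restructure so that the translation happens only at the level of $[-1,1]^n$.
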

\begin{proof}
We prove by induction on $n$. For $n=1$, $\tilde U(1)$ holds by \cite{Yang2}. Assuming that $S(n)\implies \tilde U(n-1)$ for some $n\ge 2$, we will prove that $S(n+1)\implies \tilde U(n)$.

By Lemma \ref{lem:trivial_implications}, we have $S(n+1)\implies U(n)$. But by Lemma \ref{lem:monotonicity}, we have $S(n+1)\implies S(n)$, which implies $\tilde U(n-1)$ by the induction hypothesis. Then the result follows from Proposition \ref{prop:tilde_U(n)}.
\end{proof}

\section{The degeneracy locating principle}\label{sec:degeneracy_locating}
In this section, we invoke a special case of the degeneracy locating principle of decoupling introduced in \cite[Section 2]{LiYang2024}. We only introduce and state the terminology and notation necessary for this paper.

\fbox{Notation} All implicit constants are allowed to depend on $n,d$ in this section.

\subsection{Setup}

Denote by $\mathcal R_0$ the collection of all parallelograms contained in $[-2,2]^n$. Denote by $\mathcal A_0$ the collection of all bounded affine bijections on $\R^n$ (see Section \ref{sec:notation}). The following definition corresponds to Definitions 3.6 and 3.7 of \cite{LiYang2024}.

\begin{defn}[Rescaling invariant pair]\label{defn:rescaling_invariant_pair}
Let $\mathcal P\sub \mathcal P_{n,d}$, $\mathcal R\sub \mathcal R_0$ and $\mathcal A\sub \mathcal A_0$ be subfamilies. We say that the pair of families $(\mathcal P,\mathcal R)$ is $\mathcal A$-rescaling invariant if the following holds.
\begin{enumerate}
    \item $\mathcal A$ is closed under composition, namely, $\Xi_1\circ \Xi_2\in \mathcal A$ whenever $\Xi_1,\Xi_2\in \mathcal A$.
    \item $\Xi(R)\in \mathcal R$ whenever $\Xi\in \mathcal A$ and $R\in \mathcal R$.
    \item There exists a constant $C=C(n,d)\ge 1$ such that the following holds. For every $\sigma\in (0,1]$, $\phi \in \mathcal P$ and $R\in \mathcal R$ that is $(\phi,\sigma)$-flat, there exists some $\psi\in \mathcal P$ such that
    \begin{equation*}
        \phi\circ \lambda_R=C\sigma \psi+\text{affine terms}.
    \end{equation*}
\end{enumerate}
\end{defn}
We have the following easy observation (see for instance \cite[Section 7]{LiYang2023}). 
\begin{prop}\label{prop:polynomial_rescaling_invariant}
    $(\mathcal P_{n,d},\mathcal R_0)$ is $\mathcal A_0$-rescaling invariant.
\end{prop}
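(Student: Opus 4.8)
\textbf{Proof proposal for Proposition \ref{prop:polynomial_rescaling_invariant}.}

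The plan is to verify the three conditions in Definition \ref{defn:rescaling_invariant_pair} directly, with $\mathcal P=\mathcal P_{n,d}$, $\mathcal R=\mathcal R_0$ and $\mathcal A=\mathcal A_0$. Condition (1) is immediate: the composition of two affine bijections of $\R^n$ is again an affine bijection, and the only point requiring a remark is the bookkeeping of the bound --- if $\Xi_1 x=A_1 x+b_1$ and $\Xi_2 x=A_2 x+b_2$ with all entries bounded by $C$, then $\Xi_1\circ\Xi_2 x=A_1A_2 x+(A_1 b_2+b_1)$ has entries bounded by some $C'=C'(n,C)$. So strictly speaking one should either fix the ambient constant once and for all (the definition of $\mathcal A_0$ allows \emph{any} bound), or observe that $\mathcal A_0$ as defined --- the set of \emph{all} bounded affine bijections, with no uniform bound --- is literally closed under composition. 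I would take the latter reading, so (1) holds trivially. Condition (2) is equally direct: if $R\subseteq[-2,2]^n$ is a parallelogram (a set cut out by $n$ slab conditions relative to a unit basis) and $\Xi$ is an affine bijection, then $\Xi(R)$ is again a parallelogram; the only content is that $\Xi(R)\subseteq[-2,2]^n$, which is \emph{not} automatic. Here I expect the intended convention (consistent with the degeneracy-locating framework of \cite{LiYang2024}) is that $\mathcal A_0$ is taken to consist of those bounded affine maps preserving $[-2,2]^n$, or that $\mathcal R_0$ is understood up to the harmless enlargement built into the ``enlarged overlap'' formalism of Section \ref{sec:notation}; in either case (2) is a one-line check once the convention is pinned down.

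The substance is Condition (3): the Taylor-rescaling identity. Given $\sigma\in(0,1]$, $\phi\in\mathcal P_{n,d}$, and a parallelogram $R$ that is $(\phi,\sigma)$-flat, I would write $\lambda_R:[-1,1]^n\to R$, $\lambda_R x=Ax+c$, and expand $\phi\circ\lambda_R$ as a polynomial of degree at most $d$ in the new variables. Split it as $\phi\circ\lambda_R=L+Q$ where $L$ collects the affine (degree $\le 1$) terms and $Q$ collects the degree $\ge 2$ terms. The $(\phi,\sigma)$-flatness of $R$ is precisely the statement that, after removing the affine part, $\phi\circ\lambda_R$ oscillates by $O(\sigma)$ on $[-1,1]^n$; more robustly, flatness should give $\max_{[-1,1]^n}|Q|\lesssim_{n,d}\sigma$ (this is how flatness is set up in \cite{LiYang2024}, and one uses that a polynomial of bounded degree with small sup-norm on $[-1,1]^n$ has all coefficients small, and conversely). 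Then set $\psi:=(C\sigma)^{-1}Q$ with $C=C(n,d)$ chosen so that $\max_{[-1,1]^n}|\psi|\le 1$, i.e. so that $\psi\in\mathcal P_{n,d}$: here one uses that $Q$ has degree $\le d$ and $n$ variables, and that the constant in ``small sup-norm $\Rightarrow$ small coefficients $\Rightarrow$ $\psi\in\mathcal P_{n,d}$'' depends only on $n,d$. This yields $\phi\circ\lambda_R=C\sigma\psi+L$, which is exactly the required identity.

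The main obstacle --- really the only one --- is making the flatness $\Rightarrow$ uniform smallness of the higher-order Taylor part quantitative with constants depending only on $n,d$ (and not on $\phi$, $R$ or $\sigma$). Concretely one needs: (a) the equivalence of norms on the finite-dimensional space of $n$-variate polynomials of degree $\le d$, so that ``$(\phi,\sigma)$-flat'' (defined via a vertical neighbourhood of width $\sim\sigma$) translates into $\|Q\|_{C^0([-1,1]^n)}\lesssim_{n,d}\sigma$; and (b) that after dividing by $C\sigma$ the resulting $\psi$ genuinely lands in $\mathcal P_{n,d}$, i.e. $\max_{[-1,1]^n}|\psi|\le 1$ rather than merely $\lesssim 1$ --- this is why the constant $C$ is introduced in the statement. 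Both are standard compactness/linear-algebra facts about polynomials of bounded degree and dimension, and the cited reference \cite[Section 7]{LiYang2023} presumably records exactly this computation; I would simply invoke it, noting that everything is a finite-dimensional affine change of variables followed by a degree-graded decomposition.
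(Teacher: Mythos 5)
Your proposal is correct and follows the same route the paper intends: the paper offers no proof beyond citing \cite[Section 7]{LiYang2023}, and the argument recorded there is exactly your degree-graded decomposition of $\phi\circ\lambda_R$ combined with equivalence of norms on the finite-dimensional space of polynomials of degree at most $d$. Your remark that condition (2) is not literally true as stated (since $\Xi(R)$ need not lie in $[-2,2]^n$) correctly identifies a harmless imprecision in the paper's definitions rather than a gap in your argument.
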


\begin{defn}[Degeneracy determinant]\label{defn:degeneracy_determinant}
    Let $\mathcal P\subseteq \mathcal P_{n,d}$ and $\mathcal R \subseteq \mathcal{R}_0$. We say that an operator $H$ that maps $\phi \in \mathcal P$ to some $n$-variate polynomial is a degeneracy determinant, if there exist constants $C,C',C''\ge 1$ and $\beta\in (0,1]$, depending only on $\mathcal P,\mathcal R$, such that the following holds:

    \begin{enumerate}
        \item \label{item:Lipschitz} (Lipschitz) For every $\phi\in \mathcal P$, 
        \begin{equation}\label{eqn:Lipschitz}
            |H\phi(x)-H\phi(y)|\le C|x-y|,\quad \forall x,y\in \R^n.
        \end{equation}
        
        \item \label{item:totally_degenerate_approximation} (Totally degenerate approximation)
        Let $\phi \in \mathcal P$ be such that $|H \phi(x)|\le \sigma$ on $[-1,1]^n$ for some $\sigma\in (0,1]$. Then there exists some $\psi\in \mathcal P$ satisfying $H\psi\equiv 0$ on $\R^n$, and that            \begin{equation}\label{eqn:small_hessian_error}
            \|\phi-C'\psi\|_{L^\infty([-1,1]^n)}\leq C\sigma^{\beta}.
        \end{equation}

        \item \label{item:regularity_small_Hessian_rescaling} (Regularity under rescaling) Let $R\in \mathcal R$. Then we have
        \begin{equation}
            \mu^{C''}|H\phi(\lambda_{R}x)|\le |H(\phi\circ \lambda_{R})(x)|\le |H\phi(\lambda_{R}x)|,\quad \forall x\in \R^n,
        \end{equation}
    where $\mu$ is the width of $R$.
        \end{enumerate} 

\end{defn}
{\it Remark.} This is a combination of Definitions 3.13 and 3.14 of \cite{LiYang2024}, where we have omitted the terminology ``$\mathfrak R$-regular" there.

We invoke \cite[Corollary 3.16]{LiYang2024}.
\begin{cor}\label{cor:hessian}
    Let $\mathcal P=\mathcal P_{n,d}$ and let $\mathcal R=\mathcal R_0$. Then $H\phi:=\det D^2\phi$ is a degeneracy determinant on $\mathcal P$.
\end{cor}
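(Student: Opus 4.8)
The plan is to verify directly the three defining conditions of a degeneracy determinant (Definition~\ref{defn:degeneracy_determinant}) for $H\phi=\det D^2\phi$, taking $\mathcal P=\mathcal P_{n,d}$ and $\mathcal R=\mathcal R_0$. Conditions \eqref{item:Lipschitz} and \eqref{item:regularity_small_Hessian_rescaling} are routine and I would dispose of them first. For \eqref{item:Lipschitz}: every $\phi\in\mathcal P_{n,d}$ has coefficients $\lesssim_{n,d}1$, so each entry of $D^2\phi$ is a polynomial of degree $\le d-2$ with coefficients $\lesssim_{n,d}1$, and expanding by the Leibniz formula shows $\det D^2\phi$ is a polynomial of degree $\le n(d-2)$ with coefficients $\lesssim_{n,d}1$; such a polynomial has gradient of size $\lesssim_{n,d}1$ on $[-2,2]^n$, which yields \eqref{eqn:Lipschitz} there (all that is ever used), and the formally global bound follows by composing with a harmless truncation, e.g.\ with the nearest-point projection onto $[-2,2]^n$.

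For \eqref{item:regularity_small_Hessian_rescaling}, write $\lambda_Rx=Ax+b$. The chain rule gives $D^2(\phi\circ\lambda_R)(x)=A^{\top}\,(D^2\phi)(\lambda_Rx)\,A$, hence the exact identity
\[
H(\phi\circ\lambda_R)(x)=(\det A)^2\,H\phi(\lambda_Rx).
\]
Since $\lambda_R$ maps $[-1,1]^n$ bijectively onto $R$, one has $\mathrm{vol}(R)=|\det A|\,2^n$; as $R\subseteq[-1,1]^n$ (the relevant case; for $R\subseteq[-2,2]^n$ one loses only an inessential dimensional factor) this forces $(\det A)^2\le1$, which is the required upper bound. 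For the lower bound, write $R=\{x+b:|x\cdot u_i|\le l_i\}$ with unit vectors $u_i$ and width $\mu=\min_il_i$; then $\mathrm{vol}(R)=2^n\prod_il_i/|\det(u_1,\dots,u_n)|\ge 2^n\prod_il_i\ge 2^n\mu^n$ by Hadamard's inequality, so $(\det A)^2\ge\mu^{2n}$, i.e.\ \eqref{item:regularity_small_Hessian_rescaling} holds with $C''=2n$.

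The substantive condition is \eqref{item:totally_degenerate_approximation}. I would identify $\mathcal P_{n,d}$ with a compact subset of the coefficient space $\R^N$ and let $W\subseteq\R^N$ be the real algebraic set of degree-$\le d$ polynomials $\psi$ with $\det D^2\psi\equiv0$; concretely $W=\{P_1=\dots=P_m=0\}$, where the $P_j$ are the coefficients of the polynomial $x\mapsto\det D^2\psi(x)$, each a polynomial in the coefficients of $\psi$. Since a polynomial of bounded degree has $L^\infty([-1,1]^n)$ norm comparable (up to $C_{n,d}$) to the maximum of its coefficients, $\norm{H\phi}_{L^\infty([-1,1]^n)}\sim_{n,d}\max_j|P_j(\phi)|$; in particular $\phi\mapsto\norm{H\phi}_{L^\infty([-1,1]^n)}$ and $\phi\mapsto\dist(\phi,W)$ are continuous semialgebraic functions with common zero set $W$. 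The \L{}ojasiewicz inequality for semialgebraic functions on the compact set $\mathcal P_{n,d}$ then gives $C=C(n,d)$ and $\beta=\beta(n,d)\in(0,1]$ with $\dist(\phi,W)\le C\,\norm{H\phi}_{L^\infty([-1,1]^n)}^{\beta}$. Given $\phi$ with $\norm{H\phi}_{L^\infty([-1,1]^n)}\le\sigma$, pick $\psi_0\in W$ with $\norm{\phi-\psi_0}_{L^\infty([-1,1]^n)}\le C\sigma^\beta$; then $\norm{\psi_0}_{L^\infty([-1,1]^n)}\le 1+C$, so the coefficients of $\psi_0$ are $\lesssim_{n,d}1$ and $\psi:=M^{-1}\psi_0\in\mathcal P_{n,d}$ for a suitable $M=M(n,d)\ge1$. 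Since $\det D^2\psi=M^{-2}\det D^2\psi_0\equiv0$ and $\norm{\phi-M\psi}_{L^\infty([-1,1]^n)}=\norm{\phi-\psi_0}_{L^\infty([-1,1]^n)}\le C\sigma^\beta$, condition \eqref{item:totally_degenerate_approximation} holds with $C'=M$.

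The main obstacle is precisely this power-law estimate $\dist(\phi,W)\lesssim\norm{\det D^2\phi}_{L^\infty([-1,1]^n)}^{\beta}$: a soft compactness argument would only produce $\dist(\phi,W)\to0$ as $\norm{\det D^2\phi}_{L^\infty}\to0$ with no quantitative rate, whereas Definition~\ref{defn:degeneracy_determinant} demands an explicit exponent $\beta\in(0,1]$, and obtaining such a rate is exactly the content of the \L{}ojasiewicz inequality for a polynomial system. Everything else — the determinant identity under rescaling and keeping the approximant $\psi_0$ inside $\mathcal P_{n,d}$ via the normalizing constant $C'$ — is bookkeeping.
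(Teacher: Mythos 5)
Your argument is essentially correct, and it is worth noting that the paper itself offers no proof of this corollary: it simply invokes \cite[Corollary 3.16]{LiYang2024}. What you have written is therefore a genuine self-contained verification, and you have correctly identified where the real content lies. Conditions \eqref{item:Lipschitz} and \eqref{item:regularity_small_Hessian_rescaling} are indeed bookkeeping: the identity $H(\phi\circ\lambda_R)(x)=(\det A)^2\,H\phi(\lambda_R x)$ together with $\mathrm{vol}(R)=2^n|\det A|$ and Hadamard's inequality is exactly the right computation. Condition \eqref{item:totally_degenerate_approximation} is the crux, and your route --- passing to coefficient space, observing that $\norm{H\phi}_{L^\infty([-1,1]^n)}$ and $\dist(\phi,W)$ are continuous semialgebraic functions on the compact set $\mathcal P_{n,d}$ with the same zero set $W=\{\det D^2\psi\equiv 0\}$, and invoking the \L ojasiewicz inequality to get the power-law rate $\beta$ --- is the standard and, as far as I can tell, essentially the only known way to obtain the quantitative exponent; a soft compactness argument would indeed give no rate. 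This matches the mechanism behind the cited result in \cite{LiYang2024}.

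Three small points to tidy up. First, $\det D^2(M^{-1}\psi_0)=M^{-n}\det D^2\psi_0$, not $M^{-2}$; immaterial since either way it vanishes identically. Second, in the rescaling condition the upper bound as stated has no constant, and for $R\subseteq[-2,2]^n$ your identity only gives $(\det A)^2\le 4^n$; you flag this, and it is harmless (a dimensional constant can be absorbed everywhere the condition is used), but you should say explicitly that you are proving the condition up to a constant $O_n(1)$ rather than literally. Third, the Lipschitz condition as written demands a bound for all $x,y\in\R^n$, which no nonaffine polynomial satisfies globally; your observation that only the restriction to $[-2,2]^n$ is ever used is the right resolution (the ``truncation'' remark is not needed and does not quite make sense as stated, since $H\phi$ composed with a projection is no longer the Hessian determinant). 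None of these affects the validity of the argument.
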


\begin{defn}[Trivial covering property]\label{defn:trivial_covering}
    Let $(\mathcal P,\mathcal R)$ be $\mathcal A$-rescaling invariant. 
    We say that $(\mathcal P,\mathcal R)$ satisfies the trivial covering property if there exist constants $C_1,C_2,C\ge 1$, depending only on $\mathcal P,\mathcal R,\mathcal A$, such that the following holds for every $\phi\in \mathcal P$: there exists a family $\mathcal R_{\mathrm{tri}}\sub \mathcal R$ such that for every $K\ge 1$, we can cover $[-1,1]^n$ by $\le CK^{C_1}$ parallelograms $R_0\in \mathcal R_{\mathrm{tri}}$, such that each $R_0$ has length $\le K^{-1}$ and width $\ge C^{-1}K^{-C_2}$. Moreover, the affine bijection $\lambda_{R_0}\in \mathcal A$.
        
\end{defn}
For example, in the case where $\mathcal A=\mathcal A_0$ and $\mathcal R$ contains of all the (axis-parallel) squares contained in $[-1,1]^n$, we may simply cover $[-1,1]^k$ by squares of side length $\sim K^{-1}$. In this case, we may take $C_1=k$, $C_2=1$.

\subsection{Statement of theorem}

We are now ready to state the the degeneracy locating principle adapted and simplified to the setting of this paper. In the following, all decoupling inequalities are assumed to be $\ell^p(L^p)$ decoupling, where $2\le p\le \frac{2(n+2)}{n}$.

\begin{thm}[Degeneracy locating principle]\label{thm:degeneracy_locating_principle}
Let $(\mathcal P,\mathcal R)$ be a pair that is $\mathcal A$-rescaling invariant, and assume it satisfies the trivial covering property. Let $H$ be a degeneracy determinant.

For $\phi\in \mathcal P$, there exist subfamilies $\mathcal R_{\mathrm{sub}}$, $\mathcal R_{\mathrm{deg}}$, $\mathcal R_{\mathrm{non}}$ of $\mathcal R$, such that the following holds for every $\sigma>0$ and every $\eps>0$, where all constants $C$ (resp. $C_\eps$ and the overlap function $B$) below depend only on $\mathcal P,\mathcal R,\mathcal A,H$ (resp. $\mathcal P,\mathcal R,\mathcal A,H,\eps$):
    \begin{enumerate}[label=(\alph*)]
        \item \label{item:sublevel_set_decoupling} (Sublevel set decoupling) The subset
        \begin{equation*}
            \{x\in [-1,1]^n:|H\phi(x)|\le \sigma\}
        \end{equation*}
        can be decoupled (as in Definition \ref{defn:decoupling}) into $B$-overlapping parallelograms $R_{\mathrm{sub}}\in \mathcal R_{\mathrm{sub}}$ at a cost of $C_\varepsilon\sigma^{-\varepsilon}$, on each of which $|H\phi|\le C\sigma$. Moreover, we require that $\lambda_{R_{\mathrm{sub}}}\in \mathcal A$, and that $R_{\mathrm{sub}}$ has width at least $\sigma$.
    
    \item \label{item:degnerate_decoupling} (Degenerate decoupling) If $H\phi\equiv 0$ on $[-1,1]^n$, then $[-1,1]^n$ can be $\phi$-decoupled (as in Definition \ref{defn:graphical_decoupling}) at scale $\sigma$ into $(\phi,\sigma)$-flat $B$-overlapping parallelograms $R_{\mathrm{deg}}\in \mathcal R_{\mathrm{deg}}$ at a cost of $C_\eps\sigma^{-\varepsilon}$. Moreover, we require that $\lambda_{R_{\mathrm{deg}}}\in \mathcal A$, and that $R_{\mathrm{deg}}$ has width at least $\sigma$. (Importantly, the power $C$ on $K$ cannot depend on $\eps$.)

    \item \label{item:nondegnerate_decoupling} (Nondegnerate decoupling) If $|H\phi| \geq K^{-1}$ on $[-1,1]^n$ for some $K\ge 1$, then $[-1,1]^n$ can be $\phi$-decoupled (as in Definition \ref{defn:graphical_decoupling}) at scale $\sigma$ into $(\phi,\sigma)$-flat $B$-overlapping parallelograms $R_{\mathrm{non}}\in \mathcal R_{\mathrm{non}}$ at a cost of $K^{C}\sigma^{-\varepsilon}$, and that $R_{\mathrm{non}}$ has width at least $\sigma$.

\item [(*)]\label{item:star} (Lower dimensional decoupling)
Given a parallelogram $R$ that is either an element of $\mathcal R_{\mathrm{tri}}$ from Definition \ref{defn:trivial_covering}, or an element of $\mathcal R_{\mathrm{sub}}$ in Assumption \ref{item:sublevel_set_decoupling}. Denote by $w$ the shortest dimension of $R$, and let $L\sub \R^{n-1}$ be the lower dimensional parallelogram obtained from $R$ by removing its shortest dimension. Then the following conditions hold:
            \begin{enumerate}[label=(\roman*)]
                \item \label{item:i} $L$ can be $\phi|_L$-decoupled into $(\phi|_L, w)$-flat parallelograms $R'_{\mathrm{low}}$ at scale $ w$ at cost $C_\eps  w^{-\eps}$, with $R'_{\mathrm{low}}$ having width at least $ w$.
                \item \label{item:ii} Denote by $R_{\mathrm{low}}$ the parallelogram obtained from $R'_{\mathrm{low}}$ by adding back the dimension of length $w$. Then we require that $\lambda_{R_{\mathrm{low}}}\in \mathcal A$, and that $R_{\mathrm{low}}$ is contained in $2R$.     
            \end{enumerate}

        \end{enumerate}
Then for every $\phi\in \mathcal P$, $\delta>0$ and $\eps>0$, $[-1,1]^n$ can be $\phi$-decoupled into $(\phi,\delta)$-flat $B'$-overlapping parallelograms $R_{\mathrm{final}}\in \mathcal R$ at scale $\delta$ at cost $C'_\eps \delta^{-\eps}$, where the width of $R_{\mathrm{final}}$ are at least $\delta$, and $C'_\eps$ and $B'$ depend only on $\mathcal P,\mathcal R,\mathcal A,H,\eps,p,q$.
        
\end{thm}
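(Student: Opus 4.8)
The plan is to prove the final conclusion by a single induction on the number of scales, iterating the trichotomy provided by hypotheses \ref{item:sublevel_set_decoupling}--\ref{item:nondegnerate_decoupling} together with the Bourgain--Demeter input (Theorem \ref{thm:Bourgain_Demeter}). The overall architecture mirrors the bootstrapping scheme of \cite{LiYang2024}: for a fixed $\phi\in\mathcal P$ we define the \emph{optimal} decoupling constant $D_p(\delta)$ for $\phi$-$\ell^p(L^p)$ decoupling of $[-1,1]^n$ into $(\phi,\delta)$-flat parallelograms of width $\ge\delta$ drawn from $\mathcal R$, uniformly over $\mathcal P$, and we aim to show $D_p(\delta)\lesssim_\eps \delta^{-\eps}$. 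The key mechanism is a self-improving inequality of the form $D_p(\delta)\lesssim_\eps \delta^{-\eps}\sup_{\sigma\ge\delta} D_p(\delta/\sigma)^{?}$ obtained by first decoupling at an intermediate scale $\sigma$ and then rescaling each cell back to $[-1,1]^n$ using the rescaling-invariance in Definition \ref{defn:rescaling_invariant_pair}(3), so that the rescaled polynomial again lies in $\mathcal P$ and the induction hypothesis applies at the finer relative scale $\delta/\sigma$.

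The core of the argument is the choice of the intermediate decoupling at a well-chosen scale $\sigma = \sigma(\phi,K)$, governed by the degeneracy determinant $H\phi$. First I would fix a large parameter $K$ and split $[-1,1]^n$ according to whether $|H\phi|\le K^{-1}$ or $|H\phi|\ge K^{-1}$; for the latter ``nondegenerate'' region we invoke hypothesis \ref{item:nondegnerate_decoupling} directly, paying only $K^{C}\sigma^{-\eps}$ — crucially $C$ is independent of $\eps$, so this factor is absorbable once $K$ is chosen subpolynomially in $\delta$ (e.g.\ $K = \delta^{-\eps'}$ with $\eps'$ small relative to $\eps$). For the ``degenerate'' region $\{|H\phi|\le K^{-1}\}$, we apply the sublevel set decoupling \ref{item:sublevel_set_decoupling} with $\sigma = K^{-1}$ to break it into parallelograms $R_{\mathrm{sub}}$ with $\lambda_{R_{\mathrm{sub}}}\in\mathcal A$, width $\ge K^{-1}$, and $|H\phi|\le CK^{-1}$ on each. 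On each such $R_{\mathrm{sub}}$ we use the totally degenerate approximation property (Definition \ref{defn:degeneracy_determinant}\ref{item:totally_degenerate_approximation}) together with the regularity under rescaling \ref{item:regularity_small_Hessian_rescaling}: after rescaling $R_{\mathrm{sub}}$ to $[-1,1]^n$, the rescaled $\phi$ is within $O(K^{-\beta'})$ in $L^\infty$ (for an appropriate exponent $\beta'$ depending on $\beta, C''$ and the width lower bound) of a genuinely degenerate polynomial $\psi$ with $H\psi\equiv 0$, to which hypothesis \ref{item:degnerate_decoupling} applies with cost $C_\eps\sigma^{-\eps}$. A standard perturbation/triangle-inequality argument in decoupling (comparing $N^\phi_\delta$ with $N^\psi_{2\delta}$, valid once the scale is below the approximation error) transfers the degenerate decoupling back to $\phi$.

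The remaining issue is the width lower bounds and the treatment of parallelograms whose shortest dimension $w$ is much larger than $\delta$: here one cannot see any curvature in the short direction, and the decoupling in that direction must come ``for free.'' This is precisely what hypothesis (*) is for. For each $R$ arising from $\mathcal R_{\mathrm{tri}}$ or $\mathcal R_{\mathrm{sub}}$, we first perform a lower-dimensional $\phi|_L$-decoupling on the $(n-1)$-dimensional slab $L$ at scale $w$ — costing only $C_\eps w^{-\eps}$ — then add the short dimension back to get parallelograms $R_{\mathrm{low}}\subset 2R$ with $\lambda_{R_{\mathrm{low}}}\in\mathcal A$, and only afterwards rescale and continue. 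I expect the \textbf{main obstacle} to be bookkeeping the width lower bound and the overlap function through the iteration: each rescaling step composes affine maps (staying in $\mathcal A$ by closure), multiplies widths, and multiplies overlap functions, so one must verify that after $O(\log(1/\delta)/\log(1/\sigma)) = O(1/\eps')$ iterations the cumulative width stays $\ge\delta$, the cumulative overlap $B'$ remains bounded independent of $\delta$ (it will depend on $\eps$ through the number of iterations), and the cumulative constant is still $\lesssim_\eps\delta^{-\eps}$ — this forces a careful, but ultimately routine, optimization of $K$, $\sigma$ and the iteration count, exactly as in \cite[Section 2]{LiYang2024}. Finally, a trivial partition (as in \cite[Proposition 5.24]{LiYang2024}) upgrades $(\phi,O(\delta))$-flatness to $(\phi,\delta)$-flatness at the cost of an $O(1)$ factor, completing the proof.
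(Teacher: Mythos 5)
You should first be aware that this paper does not prove Theorem \ref{thm:degeneracy_locating_principle} at all: Section \ref{sec:degeneracy_locating} explicitly \emph{invokes} it as a special case of the degeneracy locating principle of \cite[Section 2]{LiYang2024}, so there is no in-paper proof to compare against. Judged against the strategy of the cited source, your outline has the right architecture: a trivial covering at scale $K^{-1}$ combined with the Lipschitz property of $H\phi$ to split into cells where either $|H\phi|\gtrsim K^{-1}$ (handled by \ref{item:nondegnerate_decoupling}, with $K=\delta^{-\eps'}$ absorbable precisely because $C$ is independent of $\eps$) or $|H\phi|\lesssim K^{-1}$ (handled by \ref{item:sublevel_set_decoupling}, then the totally degenerate approximation plus \ref{item:degnerate_decoupling} and a perturbation step), followed by rescaling via Definition \ref{defn:rescaling_invariant_pair} and an induction on scales with $O(1/\eps')$ iterations. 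You also correctly identify that (*) must be applied \emph{before} rescaling a cell back to $[-1,1]^n$; this is not only for the width bookkeeping but is what produces $(\phi,w)$-flat pieces to which the renormalisation of Definition \ref{defn:rescaling_invariant_pair}(3) can legitimately be applied (note that $R_{\mathrm{sub}}$ itself is only guaranteed to have $|H\phi|\le C\sigma$, not to be $(\phi,\sigma)$-flat).

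Two points need fixing. First, your perturbation step has the inequality backwards: comparing $N^\phi_\delta$ with $N^{C'\psi}_{O(\delta)}$ requires the approximation error $\|\phi-C'\psi\|_{L^\infty}\lesssim K^{-\beta'}$ to be \emph{at most} the vertical scale, so the degenerate decoupling for $\psi$ can only be transferred to $\phi$ down to scales $\gtrsim K^{-\beta'}$; below that one must re-enter the induction at the finer relative scale rather than push \ref{item:degnerate_decoupling} all the way to $\delta$. Second, the bootstrap inequality is left with a literal unknown exponent and the optimisation of $K$, the intermediate scales, and the accumulated overlap is deferred; for $\ell^p$ decoupling with $p>2$ the cardinality factor $(\#\mathcal R)^{1/2-1/q}$ makes this bookkeeping substantive rather than routine, so as written the proposal is a correct plan but not yet a proof.
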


\section{Sublevel set and degenerate decoupling imply uniform decoupling}\label{sec:S_imply_U}

In this section, we prove Theorem \ref{thm:main_SZ_imply_U}. The main idea is to use the degeneracy locating principle Theorem \ref{thm:degeneracy_locating_principle}, in the following setting.

Let $\mathcal P=\mathcal P_{n,d}$, $\mathcal A=\mathcal A_0$ and $\mathcal R=\mathcal R_0$. Define $H\phi(x):=\det D^2 \phi$ for $\phi\in \mathcal P$.

We then need to verify the following ingredients of the degeneracy locating principle for $\mathcal P,\mathcal R,\mathcal A,H$:
\begin{itemize}
    \item $(\mathcal P,\mathcal R)$ is $\mathcal A$-rescaling invariant: this is Proposition \ref{prop:polynomial_rescaling_invariant}.
    \item $H$ is a degeneracy determinant: this follows from Corollary \ref{cor:hessian}.
    \item Trivial covering property: this is trivial, since every cube of side length $K^{-1}$ belongs to $\mathcal R$. 
    \item Sublevel set decoupling: this is just the assumption $S(n)$.
    \item Totally degenerate decoupling: this is just the assumption $Z(n)$.
    \item Nondegenerate decoupling: this follows from Theorem \ref{thm:Bourgain_Demeter}.
    \item Lower dimensional decoupling: we have $\tilde U(n-1)$, which follows from the assumption $S(n)$ and Lemma \ref{lem:trivial_implications_tilde}. Hence (*) follows. 
\end{itemize}

\section{Decoupling for nearly affine functions}\label{sec:almost_affine}

In this section we prove Theorem \ref{thm:trivariate_decoupling}. In fact, we will prove the following theorem in all dimensions $n$. 

\begin{thm}\label{thm:almost_affine}
    Let $n\ge 1$. Let $A_1,\dots,A_n\in \mathcal P_{1,d}$, and define
    \begin{equation}\label{eqn:phi_almost_affine}   \phi(x)=\phi(x_1,\dots,x_n)=A_1(x_1)+A_2(x_1)x_2+\cdots+A_n(x_1)x_n.
    \end{equation}    
    Then for $2\le p\le 4$, $\delta\in (0,1)$ and $\eps\in (0,1)$, $[-1,1]^n$ can be $\phi$-$\ell^p(L^p)$ decoupled into boundedly overlapping $(\phi,\delta)$-flat parallelograms at the cost of $C_\eps \delta^{-\eps}$, where $C_\eps$ and the overlap function do not depend on $\phi,\delta$. Moreover, the parallelograms have width at least $\delta$, and can be taken from the family
    \begin{equation}\label{eqn:I_times_T}
     \mathcal R^0:=\{I\times T:I\sub [-1,1]\text{ is an interval, $T\sub [-1,1]^{n-1}$ is a parallelogram}\}.
 \end{equation}
\end{thm}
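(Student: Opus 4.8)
The plan is to prove Theorem~\ref{thm:almost_affine} by induction on $n$, where the base case $n=1$ is $U(1)$ from \cite{Yang2} (more precisely $\tilde U(1)$). For the inductive step, the key structural observation is that $\phi(x_1,\dots,x_n) = A_1(x_1) + A_2(x_1)x_2 + \cdots + A_n(x_1)x_n$ is affine in the variables $x_2,\dots,x_n$ for each fixed $x_1$. The strategy is to decouple first in the $x_1$-variable, and then, on each resulting slab, exploit the near-affine structure to decouple in the remaining variables essentially for free.

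First I would apply $\tilde U(1)$ (or a suitable one-dimensional decoupling) to partition $[-1,1]$ into intervals $I$ of length at least $\delta^{1/2}$ (or more precisely at an intermediate scale to be iterated), such that on the vertical neighbourhood $N_\delta^\phi$ the piece over $I \times [-1,1]^{n-1}$ is flat in a sense governed by the behaviour of the $A_j$ on $I$. The point is that on $I$, each $A_j$ is well-approximated by an affine function $a_j + b_j(x_1 - c_I)$ up to an error controlled by $|I|^2 \|A_j''\|_\infty \lesssim |I|^2$; after subtracting affine terms in $(x_1,\dots,x_n)$ (which do not affect decoupling), the remaining ``curved'' part of $\phi$ on $I \times [-1,1]^{n-1}$ is $O(|I|^2)$ plus terms of the form $(\text{quadratic-or-higher in } x_1) \cdot x_j$. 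On a slab where $|I| \sim \delta^{1/2}$ these curved contributions are already $O(\delta)$, so the vertical neighbourhood over such a slab is $(\phi,\delta)$-flat after a trivial $O(1)$ further partition, and we are done on that slab with a single parallelogram $I \times [-1,1]^{n-1}$; for larger $|I|$ we rescale $I \times [-1,1]^{n-1}$ back to $[-1,1]^n$ and observe that $\phi$ pulls back to $|I| \cdot \psi + \text{affine}$ where $\psi$ again has the near-affine form \eqref{eqn:phi_almost_affine} (the class is rescaling-invariant in the $x_1$-variable), so we may iterate.

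The mechanism that makes the $x_2,\dots,x_n$ directions cost nothing is that for fixed $x_1 \in I$, the map $(x_2,\dots,x_n) \mapsto \phi(x_1,x_2,\dots,x_n)$ is affine, hence its graph over any sub-box is already flat; thus the only genuine decoupling happening is in $x_1$. Concretely, after the $x_1$-decoupling into intervals $I$ with $|I| \gtrsim \delta^{1/2}$, I would further subdivide each $I \times [-1,1]^{n-1}$ in the $x_2,\dots,x_n$ directions into translates of $I \times$(boxes of the appropriate dimensions) chosen so that the oscillation of $\phi$ minus its linearization is $\le \delta$ on each piece — the widths needed here are dictated by $\sup_{x_1 \in I}|A_j'(x_1) - A_j'(c_I)| \lesssim |I|$, forcing the $x_j$-side length to be $\gtrsim \delta/|I| \ge \delta^{1/2} \ge \delta$, which gives the required width lower bound and the membership in the family $\mathcal R^0$ of \eqref{eqn:I_times_T}. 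Since affine graphs decouple trivially with constant $1$ and $\ell^2 \subseteq \ell^p$, this refinement is free; the only accumulated cost is from iterating the $x_1$-decoupling, which is $C_\eps \delta^{-\eps}$ by the usual telescoping/Bourgain–Guth-style bookkeeping over $O(\log(1/\delta))$ scales.

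The main obstacle I anticipate is organizing the bookkeeping so that the parallelograms stay within the restricted family $\mathcal R^0 = \{I \times T\}$ with all the flatness and width constraints simultaneously, and in particular handling the mixed terms $A_j(x_1)x_j$ carefully: when we rescale a slab $I \times [-1,1]^{n-1}$ back to the unit cube, the affine part we subtract in the $x_1$-direction interacts with the $x_j$ variables, so one must check that ``subtracting affine terms'' genuinely removes the linear-in-$x_1$ contribution to each $A_j(x_1)x_j$ and that the leftover is a rescaled copy of the same near-affine class. I would also need the $p \le 4$ restriction to use the planar (two-dimensional) Bourgain–Demeter decoupling exponent when, in the $n \ge 2$ case, one projects to the $(x_1,x_j)$-plane to analyze the curve $x_1 \mapsto A_j(x_1)$; the range $2 \le p \le 4 = \frac{2(2+1)}{2-1}$ is exactly the non-degenerate planar decoupling range, and this is what caps $p$ at $4$ uniformly in $n$. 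The cleanest route is probably to phrase the induction directly at the level of Definition~\ref{defn:graphical_decoupling} and invoke $\tilde U(1)$ together with a lemma that affine functions of a subset of the variables contribute trivially, rather than re-deriving everything by hand.
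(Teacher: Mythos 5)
Your central mechanism --- ``for fixed $x_1$ the map $(x_2,\dots,x_n)\mapsto\phi$ is affine, hence the refinement in those variables is free'' --- does not work for $\ell^p(L^p)$ decoupling when $p>2$, and this is a fatal gap rather than a technicality. In the normalization of Definition \ref{defn:decoupling} with $q=p$, splitting a genuinely flat piece into $M$ boundedly overlapping sub-pieces costs $M^{\frac12-\frac1p}$ (flat decoupling is sharp); here, after localizing $x_1$ to an interval $I$ with $|I|\sim\delta^{1/2}$, the bilinear remainder $\sum_j A_j'(c_I)(x_1-c_I)x_j$ has size $|I|\sim\delta^{1/2}\gg\delta$, so one must further subdivide into $M\sim\delta^{-1/2}$ slabs $\{v\le \sum_j A_j'(c_I)x_j\le v+\delta^{1/2}\}$ before the piece is $(\phi,\delta)$-flat, and doing this ``for free'' would cost $\delta^{-\frac12(\frac12-\frac1p)}$, e.g.\ $\delta^{-1/8}$ at $p=4$. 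The gain over flat decoupling is exactly the content of the theorem: the joint decoupling in the two variables $x_1$ and $\ell(x):=\sum_j A_j'x_j$ must come from curvature, not orthogonality. You half-acknowledge this when you say $p\le4$ is needed for planar Bourgain--Demeter, but that remark contradicts the main body of your argument, and you never specify the nondegeneracy hypothesis under which the planar theorem applies or what to do where it fails.

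What the paper actually does is run the degeneracy locating principle (Theorem \ref{thm:degeneracy_locating_principle}) with the degeneracy determinant $H^0\phi=\sum_{i=1}^n(A_i'(x_1))^2$, so the unit cube is split according to whether $H^0\phi$ is small (then $\phi$ is close to a function of $x_1$ alone plus affine terms, handled by $\tilde U(1)$), or bounded below by $K^{-2}$ (Proposition \ref{prop:Pramanik_Seeger}), with a one-dimensional sublevel-set decoupling gluing the regimes. In the nondegenerate case the proof is a Pramanik--Seeger iteration: at each scale $\delta_i$ one approximates $A_j(x_1)$ by $A_j(0)+A_j'(0)x_1$ on the current cap, and after an affine change of variables in $(x_2,\dots,x_n)$ the curved part collapses to the \emph{two-variable} function $A_1(x_1)+A_n'(0)x_1x_n$, to which Theorem \ref{thm:Bourgain_Demeter} in the plane applies --- that is where $p\le4$ and the caps \eqref{eqn:decoupling_parallelograms} come from. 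Your proposal is missing (i) the dichotomy on $\sum_j(A_j')^2$ and the attendant sublevel-set decoupling, (ii) the actual two-dimensional decoupling step replacing your ``free'' subdivision, and (iii) the lower-dimensional decoupling needed for thin caps (which the paper gets from the induction hypothesis and $\tilde U(2)$). Without (ii) in particular, the argument proves nothing beyond the trivial flat bound.
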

\begin{proof}[Proof of Theorem \ref{thm:trivariate_decoupling} assuming Theorem \ref{thm:almost_affine}]
It suffices to prove decoupling for the two types of polynomials in Lemma \ref{lem:zero_Hessian_paper}. The decoupling for the first type of functions follows from \cite[Theorem 5.22]{LiYang2024}. The decoupling for the second type of functions is the special case of Theorem \ref{thm:almost_affine} when $n=3$.
\end{proof}

To prove Theorem \ref{thm:almost_affine}, we use induction on $n$ and the degeneracy locating principle Theorem \ref{thm:degeneracy_locating_principle}, this time with different families and a different degeneracy determinant.

The case $n=1$ follows from \cite{Yang2}. Assume that it holds for dimension $n-1\ge 1$, and we will prove that it holds for dimension $n$.

\subsection{Ingredients of degeneracy locating principle}\label{eqn:sec:delta_thickening}

Let $\mathcal P^0$ consist of the graphs over $[-1,1]^n$ of all polynomials $\phi$ in the form of \eqref{eqn:phi_almost_affine}. 

Let $\mathcal A^0$ consist of all bounded affine bijections the form $\Xi x:=Vx+w$ where $V$ is $n\times n$ and $V_{1j}=V_{j1}=0$ for all $2\le j\le n$. (Equivalently, $\mathcal A^0$ consists of all $\lambda_R$ where $R\in \mathcal R^0$ defined in \eqref{eqn:I_times_T}.) Define
for $\phi\in \mathcal P^0$
\begin{equation}\label{eqn:H^0}
    H^0 \phi(x):=\sum_{i=1}^n (A_i'(x_1))^2.
\end{equation}

We then need to verify the following ingredients of the degeneracy locating principle for $\mathcal P^0,\mathcal R^0,\mathcal A^0,H^0$:
\begin{itemize}
    \item $(\mathcal P^0,\mathcal R^0)$ is $\mathcal A^0$-rescaling invariant: this follows from the same proof of Proposition \ref{prop:polynomial_rescaling_invariant}.

    \item $H^0$ is a degeneracy determinant: this is straightforward to check, with $\beta=1/2$.
    \item Trivial covering property: this is trivial, since every cube $R_0$ of side length $K^{-1}$ belongs to $\mathcal R^0$. 
    \item Sublevel set decoupling: this follows from the same proof of Proposition \ref{prop:delta_thickening}.
    \item Totally degenerate decoupling: this follows from $\tilde U(1)$.
    \item Nondegenerate decoupling: this follows from Proposition \ref{prop:Pramanik_Seeger} below (with $K^{-1}$ replaced by $K^{-2}$).
    \item Lower dimensional decoupling: for parallelograms $R\in \mathcal R^0$ with width $w$, we need to consider two cases, namely, $w$ is attained by $I$ or $T$.
    \begin{itemize}
    \item If $T$ has width $w$, then the decoupling at scale $w$ follows from the induction hypothesis that Theorem \ref{thm:almost_affine} holds for $n-1$ variables.
    
        \item If $I$ has length $w$, then decoupling at scale $w$, all terms $A_i(x_1)$ become affine terms, so by affine invariance, we are decoupling the graph of
    \begin{equation*}
        (x_1,\dots,x_n)\mapsto x_1\Lambda(x_2,\dots,x_n),
    \end{equation*}
    for some linear transformation $\Lambda:\R^{n-1}\to \R$. By an affine transformation, this essentially becomes the decoupling of the function $\phi(x)=x_1x_2$. This follows from $\tilde U(2)$, which holds by Corollary \ref{cor:U(2)}.
    
    \end{itemize}
   
\end{itemize}

\begin{prop}[Nondegenerate decoupling]\label{prop:Pramanik_Seeger}
Let $n\ge 2$. Let $A_1,\dots,A_n\in \mathcal P_{1,d}$, and define
    \begin{equation}    \phi(x_1,\dots,x_n)=A_1(x_1)+A_2(x_1)x_2+\cdots+A_n(x_1)x_n.
    \end{equation}  
Assume also that $|H^0\phi(x_1)|\ge K^{-2}$ for all $x_1\in [-1,1]$ (see \eqref{eqn:H^0}). Then for $2\le p\le 4$, $\delta\in (0,1)$ and $\eps\in (0,1)$, $[-1,1]^n$ can be $\phi$-$\ell^p(L^p)$ decoupled into boundedly overlapping $(\phi,\delta)$-flat parallelograms $R$ at cost $C_\eps K^{C}\delta^{-\eps}$, where $C_\eps$ and the overlap function depend only on $n,d,\eps,p$ and $C$ depends only on $n,d$.

Moreover, $R$ can be taken to be of the following form:
\begin{equation}\label{eqn:decoupling_parallelograms}
    [u,u+\delta^{1/2}]\times \{(x_2,\dots,x_n)\in [-1,1]^{n-1}:A_2'(u)x_2+\cdots A_n'(u)x_n\in [v,v+\delta^{1/2}]\},
\end{equation}
where $u,v$ range through the $\delta^{1/2}$-lattice on $[-1,1]$ and $[-O(1),O(1)]$, respectively. In particular, $R$ has width $\gtrsim \delta^{1/2}\ge \delta$. 
\end{prop}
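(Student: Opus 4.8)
The plan is to reduce to a two-dimensional non-degenerate decoupling (the Pramanik--Seeger / Bourgain--Demeter philosophy for the curve $t\mapsto (t,t^2)$ or rather the relevant cylinder) after a fibrewise change of variables in $(x_2,\dots,x_n)$. Fix $u\in[-1,1]$. Since $H^0\phi(u)=\sum_i (A_i'(u))^2\ge K^{-2}$, at least one of $A_2'(u),\dots,A_n'(u)$ has absolute value $\gtrsim K^{-1}$ (if only $A_1'(u)$ were large we would still have the sum bounded below, but then the fibre $x\mapsto A_2'(u)x_2+\cdots+A_n'(u)x_n$ could be degenerate; so one must be slightly careful and include the $A_1'$ term by noting $\phi(x_1,\dots,x_n)=A_1(x_1)+\sum_{i\ge 2}A_i(x_1)x_i$ and that the relevant "velocity'' in the $x_1$-direction is $A_1'(x_1)+\sum_{i\ge 2}A_i'(x_1)x_i$). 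The key geometric point is that along the $x_1$-direction the graph behaves like the one-dimensional curve $x_1\mapsto (x_1, \phi(x_1,x_2,\dots,x_n))$ whose second $x_1$-derivative is $A_1''(x_1)+\sum_{i\ge2}A_i''(x_1)x_i$, but since we only assume a lower bound on $H^0$ and not on this second derivative, the honest statement is that on each vertical strip $I\times[-1,1]^{n-1}$ with $|I|=\delta^{1/2}$ the function $\phi$ is, after subtracting affine terms in $x_1$, essentially $x_1\cdot L_u(x_2,\dots,x_n)$ where $L_u(x_2,\dots,x_n)=A_2'(u)x_2+\cdots+A_n'(u)x_n$ up to $O(\delta^{1/2})$ errors and up to the curvature of the $A_i$, which is harmless at this scale.

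First I would perform the standard flat decoupling / broad--narrow reduction to pass to $x_1$-strips $I=[u,u+\delta^{1/2}]$: trivially partition $[-1,1]$ into $\delta^{-1/2}$ such intervals at the cost of $O(1)$ in the $\ell^p(L^p)$ constant (this is a one-dimensional decoupling which is free because there is no cardinality loss when we pass from $\delta^{-1/2}$ pieces to the same number of pieces — more precisely use the trivial $\ell^p$ decoupling plus the $(\phi,\delta^{1/2})$-flatness of each strip after removing the $x_1$-shortest dimension). Second, on each strip, rescale in $x_1$ by $\delta^{-1/2}$; the resulting function is $C\delta^{1/2}\psi$ plus affine terms for $\psi\in\mathcal P_{n,d}$, and the point is that the Hessian lower bound $H^0\ge K^{-2}$ is preserved up to powers of $K$ under this rescaling by the "regularity under rescaling'' property (item \ref{item:regularity_small_Hessian_rescaling} of Definition \ref{defn:degeneracy_determinant}), which is why the final cost carries a factor $K^C$. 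Third, on the rescaled strip the non-affine part of $\phi$ is now of size $\sim\delta^{1/2}$ and looks like $x_1 L_u(x_2,\dots,x_n)$, so applying the linear change of coordinates $y=L_u(x_2,\dots,x_n)$ (together with $n-2$ complementary linear coordinates that are left untouched), decoupling $\phi$ into the parallelograms \eqref{eqn:decoupling_parallelograms} becomes exactly the two-dimensional decoupling for the graph of $(x_1,y)\mapsto x_1 y$ (i.e. $\phi(x_1,y)=x_1y$) at scale $\delta$, tensored trivially with the remaining flat $(n-2)$ directions. This two-dimensional decoupling is $\tilde U(2)$ (Corollary \ref{cor:U(2)}), or can alternatively be obtained by the parabolic rescaling $x_1y=\tfrac14((x_1+y)^2-(x_1-y)^2)$ reducing to Theorem \ref{thm:Bourgain_Demeter} for the parabola; either way it gives an $\ell^p(L^p)$ bound with $C_\eps\delta^{-\eps}$ for $2\le p\le 6$, which in particular covers the range $2\le p\le 4$ (indeed $2\le p\le 4$ is where we need it, leaving room).

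The output parallelograms should be tracked carefully: undoing the linear change of variables $y=L_u(x_2,\dots,x_n)$ and the $x_1$-rescaling turns the $\delta^{1/2}\times\delta^{1/2}$-squares in $(x_1,y)$ (times the flat directions) into exactly the sets in \eqref{eqn:decoupling_parallelograms}, which are genuine parallelograms of width $\gtrsim\delta^{1/2}$; bounded overlap of the family is inherited from bounded overlap of the lattice squares and of the $x_1$-strips, and $(\phi,\delta)$-flatness on each such parallelogram is a direct check since on it the non-affine part of $\phi$ oscillates by $O(\delta)$. I expect the main obstacle to be the first bullet — handling the $A_1$ term and the curvature of the $A_i$'s cleanly, i.e. verifying that the lower bound on $H^0\phi=\sum(A_i')^2$ (rather than on a Hessian determinant) genuinely forces the linear form $L_u$ to be non-degenerate on the relevant fibre for at least a $\gtrsim K^{-1}$-fraction of $u\in[-1,1]$, and that the $x_1$-strips on which $|L_u|$ is too small can be absorbed into a lower-dimensional (in fact affine, hence trivial) decoupling. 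This is a routine but slightly fiddly case analysis; once it is done, the rescaling bookkeeping and the reduction to $\tilde U(2)$ are mechanical.
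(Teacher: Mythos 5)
Your target — reducing to the two–dimensional model $(x_1,y)\mapsto x_1y$ via the fibrewise linear form $L_u$ — is the right one, but the route you propose has a genuine gap at the very first step. Decoupling $[-1,1]^n$ into the $\delta^{-1/2}$ strips $[u,u+\delta^{1/2}]\times[-1,1]^{n-1}$ is not free: in the $\ell^p(L^p)$ normalisation of Definition \ref{defn:decoupling}, trivial decoupling into $N$ pieces costs a positive power of $N$ (e.g.\ $N^{1/2}$ by the triangle inequality and H\"older), i.e.\ a negative power of $\delta$ here, and the $(\phi,\delta^{1/2})$-flatness of a strip lets you \emph{stop} refining there, not \emph{produce} the strips for free. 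For the model $x_1y$ the strip decoupling is essentially as hard as the full decoupling into $\delta^{1/2}\times\delta^{1/2}$ squares — there is no one-dimensional curvature in either coordinate direction to exploit, and rescaling a strip reproduces the same strip-decoupling problem in the other variable — so the $x_1$-localisation must come out of the curved-surface input, not precede it. A second, related gap: the approximation of $\phi$ by $A_1(x_1)+x_1L_u(x_2,\dots,x_n)$ plus affine terms carries an error $O(|I|^2)$ on an $x_1$-interval $I$ (from the second-order Taylor remainders of the $A_j$), which is $O(1)$ on all of $[-1,1]$; so a single global application of the two-dimensional decoupling to the model is not legitimate.

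The paper resolves both issues at once with a Pramanik--Seeger iteration, which is the ingredient your proposal is missing. After a pigeonholing/rescaling reducing to $|A_n'|\sim 1$ (this disposes of your worry about the $A_1$ term and about which $A_j'$ is large — real but routine), one first trivially decouples $(x_2,\dots,x_n)$ into cubes of side $\delta^{\eps}$ at acceptable cost $\delta^{-O(\eps)}$, then passes through the scales $\delta_i=\delta^{i\eps}$. On an $x_1$-interval of length $\delta_i^{1/2}$ the Taylor error of each $A_j$ is $O(\delta_i)$, and — this is the point of the initial cube localisation — it is multiplied by $|x_j|\le\delta^{\eps}$, giving $O(\delta_i\delta^{\eps})=O(\delta_{i+1})$. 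Hence the model $A_1(x_1)+A_n'(0)x_1x_n$ (the $n=2$ case, handled by Theorem \ref{thm:Bourgain_Demeter}) can be legitimately used to decouple from scale $\delta_i$ to $\delta_{i+1}$, refining the $x_1$-interval and the slab $\{L_u\in[v,v+\delta_i^{1/2}]\}$ \emph{simultaneously}; iterating $\eps^{-1}$ times yields the parallelograms \eqref{eqn:decoupling_parallelograms} at total cost $C_\eps\delta^{-\eps}$. Without this multiscale bootstrap your argument does not close.
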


\fbox{Notation} All implicit constants below in this section are allowed to depend on $n,d$. All decoupling inequalities below are $\ell^p(L^p)$ where $2\le p\le 4$.

\subsection{Proof of Proposition \ref{prop:Pramanik_Seeger}}

    For $n=2$, the result just follows from Theorem \ref{thm:Bourgain_Demeter} since $|\det D^2 \phi|\gtrsim K^{-2}$.

    For $n> 2$, we will reduce to the case $n=2$. We can check that $\det D^2 \phi\equiv 0$, so we will use a Pramanik-Seeger iteration similar to the way that is used to prove decoupling for the light cone in \cite[Section 8]{BD2015} (see also \cite[Section 2]{LiYang2024}). 

    \subsubsection{Reduction to $|A_n'(x_1)|\sim 1$}

First, by Lipschitz condition of $H^0$, we may trivially partition $[-1,1]^n$ into $\sim K^n$ smaller cubes on each of which there is one $j$ such that $|A_j'(x_1)|\gtrsim K^{-1}$. By symmetry, it suffices to study the case $j=n$.
    
By a dyadic decomposition, we first trivially decouple into intervals $I_0$ where $|A_n'(x_1)|$ is dyadically a constant $\ge K^{-1}$. We then rescale $I_0$ to $[-1,1]$, so that $|\frac {d}{dx_1}A_n(\lambda_{I_0}(x_1))|$ is dyadically a constant $\sigma$. We may assume without loss of generality that each $I_0$ has length $\ge K^{-1}$, so that $\sigma\ge K^{-2}\gg \delta$. Denote
\begin{equation*}
\bar A_i(x_1):=A_i(\lambda_{I_0} x_1)-\text{constant term},\quad 1\le i\le n-1,
\end{equation*}
and 
\begin{equation*}
    \bar A_n(x_1)=\sigma^{-1}A_n(\lambda_{I_0} x_1)-\text{constant term},
\end{equation*}
so that $\bar A_i\in \mathcal P_{1,d}$ and $|\bar A_n'|\sim 1$. We will $\bar \phi$-decouple $[-1,1]^n$ where
\begin{equation*}
    \bar \phi(x):=\bar A_1(x_1)+\bar A_2(x_1)x_2+\cdots+\sigma\bar A_{n}(x_1)x_n.
\end{equation*}
Next, we apply a trivial decoupling to the coordinates $x_2,\dots,x_{n-1}$ to obtain intervals of length $\sigma$. Redefining $\bar A_1$ if necessary, it is equivalent to $\hat \phi$-decoupling $[-1,1]^n$ where
\begin{equation*}
    \hat \phi(x):=\bar A_1(x_1)+ \sigma \bar A_2(x_1)x_2+\cdots+\sigma\bar A_{n}(x_1)x_n.
\end{equation*}
Then, we apply a trivial decoupling to the coordinate $x_1$ to obtain intervals $I_1$ of length $\sigma$. Denote
\begin{equation*}
    \tilde A_1(x_1):=\sigma^{-2}\bar A_1(\lambda_{I_1} x_1)-\text{affine term},
\end{equation*}
and
\begin{equation*}
\tilde A_i(x_1):=\sigma^{-1}\bar A_i(\lambda_{I_1} x_1)-\text{constant term},\quad 2\le i\le n,
\end{equation*}
so that 
\begin{equation*}
    \tilde \phi(x):=\sigma^{-2}\hat \phi(\lambda_{I_1}x_1,x_2,\dots,x_n)-\text{affine terms}
\end{equation*}
satisfies that 
\begin{equation*}
    \tilde \phi(x)=\tilde A_1(x_1)+\tilde A_2(x_1)x_2+\cdots+\tilde A_n(x_1)x_n,
\end{equation*}
where $|\tilde A_n'|\sim 1$.  In the following proof, we will abuse notation and still denote $\tilde A_i$ as $A_i$, but with $|A_n'(x_1)|\sim 1$. 
  
    \subsubsection{Trivial decoupling}
    Let $\delta\in (0,1)$ and $\eps\in (0,1)$ be given. We apply a trivial decoupling to each variable $x_2,\dots,x_{n}$ to $\phi$-decouple $[-1,1]^{n-1}$ into smaller cubes $Q$ of side length $\delta^\eps$. By a translation and renaming the polynomials $A_j$, $1\le j\le n$ if necessary, we can assume that $0$ is a vertex of $Q$, namely, $Q=[0,\delta^\eps]^{n-1}$. The cost of decoupling is $\delta^{-O(\eps)}$ which is acceptable. Thus it remains to $\phi$-decouple $[-1,1]\times Q$ into $(\phi,\delta)$-flat parallelograms given by the following:
    \begin{equation}\label{eqn:decoupling_parallelograms_trivial_partition}
    [u,u+\delta^{1/2}]\times \{(x_2,\dots,x_{n})\in Q:A_2'(u)x_2+\cdots A_n'(u)x_n\in [v,v+\delta^{1/2}]\},
\end{equation}
where $u,v$ range through the $\delta^{1/2}$-lattice on $[-1,1]$ and $[-C,C]$, respectively.

    Denote by $D(\delta)$ the smallest decoupling constant associated to the decoupling in the previous sentence. We will need to show that $D(\delta)\le C_\eps \delta^{-\eps}$.

    \subsubsection{Induction on scales}
    
    Define $N=\eps^{-1}$ (of course we may assume $\eps\in \N^{-1}$), and define
    \begin{equation}
        \delta_i:=\delta^{i\eps},\quad 1\le i\le N.
    \end{equation}
    We aim to prove a bootstrap inequality of the form
    \begin{equation}\label{eqn:bootstrap}
        D(\delta_{i+1})\le C'_\eps D(\delta_i)\delta^{-\eps^2}, \quad \forall 1\le i\le N,
    \end{equation}
    for some constant $C'_\eps$. Then the result follows from iterating the boostrap inequality $N$ times.

    Let $1\le i\le N-1$ and suppose that we have $\phi$-decoupled into $(\phi,\delta_i)$-flat parallelograms $R_i$. We need to further $\phi$-decouple each $R_i$ into $(\phi,\delta_{i+1})$-flat parallelograms $R_{i+1}$.

    \subsubsection{Approximation}
    Fix one $R_i$, and by translation and renaming the polynomials $A_j$, $1\le j\le n$ if necessary, we may assume that $0$ is a vertex of $R_i$. By the induction hypothesis (at scale $\delta_i$), we may write $R_i$ in the form of \eqref{eqn:decoupling_parallelograms_trivial_partition}:
    \begin{equation*}
        [0,\delta_i^{1/2}]\times \{(x_2,\dots,x_{n})\in Q:A_2'(0)x_2+\cdots A_n'(0)x_n\in [0,\delta_i^{1/2}]\}.
    \end{equation*}
    Note that for all $x_1\in [0,\delta_i^{1/2}]$ and all $1\le j\le n$, we have
    \begin{equation*}
        |A_j(x_1)-A_j(0)-A'_j(0)x_1|\lesssim \delta_i.
    \end{equation*}
    As a result, we can approximate $\phi$ by
    \begin{equation}
        \tilde \phi(x):=A_1(x_1)+(A_2(0)+A'_2(0)x_1)x_2+\cdots+(A_{n}(0)+A'_{n}(0)x_1)x_{n},
    \end{equation}
    and since we also have $(x_2,\dots,x_{n})\in [0,\delta^\eps]^{n-1}$, the error of this approximation is $O(\delta_i \delta^\eps)=O(\delta_{i+1})$. Thus, it now suffices to $\phi$-decouple $R_i$ into $(\tilde \phi,\delta_i)$-flat parallelograms.

    \subsubsection{Applying lower dimensional result}
    We now focus on $\tilde \phi(x)$. Removing affine terms of $\tilde \phi$, it is equivalent to considering
    \begin{equation}
        A_1(x_1)+A'_2(0)x_1x_2+\cdots+A_{n}'(0)x_1x_{n}.
    \end{equation}
    But by an affine transformation in the $(x_2,\dots,x_{n})$ coordinate space, the above just reduces to study decoupling for $A_1(x_1)+A_n'(0)x_1 x_n$, which is the case $n=2$. One can also check that the decoupled parallelograms are also in the form of \eqref{eqn:decoupling_parallelograms}. This finishes the proof of the bootstrap inequality \eqref{eqn:bootstrap} and hence Proposition \ref{prop:Pramanik_Seeger}.

\section{Uniform decoupling implies sublevel set decoupling}\label{sec:U_imply_S}

In this section, we prove Theorem \ref{thm:main_U_imply_S}. More precisely, we will prove a superficially stronger statement, namely, $U(n-1)$ implies $\tilde S(n)$ as in  Definition \ref{defn:sublevel_set_decoupling_tilde}.

\begin{thm}\label{thm:sublevel_set_decoupling}
Let $n\ge 2$ and assume that $U(n-1)$ holds. Then $\tilde S(n)$ holds.
\end{thm}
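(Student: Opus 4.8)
The plan is to run an induction on $n$, using $U(n-1)$ and the sublevel set machinery of the degeneracy locating principle (Theorem \ref{thm:degeneracy_locating_principle}), combined with an iterative, Pramanik--Seeger-style scheme in the single "bad" direction where $\phi$ can vanish. Since $\tilde S(n)$ reduces to $S(n)$ via Proposition \ref{prop:tilde_S(n)} (using $\tilde S(n-1)$, which follows from $U(n-1)\Rightarrow S(n-1)$ by Lemma \ref{lem:trivial_implications} plus Proposition \ref{prop:tilde_S(n)} again, unwinding the induction), it suffices to prove $S(n)$: given $\phi\in\mathcal P_{n,d}$, $\delta\ll 1$ and $\eps>0$, produce a boundedly overlapping cover $\mathcal S_\delta$ of $\{|\phi|<\delta\}\cap[-1,1]^n$ by parallelograms of width $\ge\delta$, on each of which $|\phi|\lesssim\delta$, and into which the sublevel set $\ell^p(L^p)$-decouples at cost $\lesssim\delta^{-\eps}$ for $2\le p\le\frac{2(n+1)}{n-1}$.

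First I would set up the bootstrap. Fix the decoupling constant $D(\delta)$ for the sublevel-set decoupling problem at scale $\delta$, uniform over $\phi\in\mathcal P_{n,d}$, and aim for a recursive inequality of the shape $D(\delta)\lesssim_\eps \delta^{-O(\eps^2)}\,D(\delta^{1-O(\eps)})$ together with $U(n-1)$-type input, iterated $O(1/\eps)$ times. At each stage, I start from a box (or flat parallelogram) $R$ on which $|\phi|\lesssim\sigma$ for the current scale $\sigma$; rescaling $R$ to $[-1,1]^n$ turns $\phi$ into $C\sigma\psi$ for some $\psi\in\mathcal P_{n,d}$ by Proposition \ref{prop:polynomial_rescaling_invariant}. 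The key geometric step is to decouple $[-1,1]^n$ into slabs adapted to $\nabla\psi$: partition into pieces where $|\partial_j\psi|$ is dyadically constant for the dominant $j$, and where that dominant size is $\gtrsim\eta$ for a threshold $\eta\sim\delta^{c\eps}$; on such a piece $\psi$ is, after an affine change of variables in the remaining $n-1$ coordinates, essentially a graph $y_n = g(y_1,\dots,y_{n-1}) + (\text{small})$ whose $\delta$-sublevel set is an $O(\delta/\eta)$-neighbourhood of a graph over an $(n-1)$-dimensional domain. That is precisely where $U(n-1)$ enters: apply uniform decoupling to $g$ in $n-1$ variables at the appropriate rescaled scale (this is the "$\delta$-thickening" of a graphical decoupling into a sublevel-set decoupling, as in the proof of Proposition \ref{prop:delta_thickening} and \cite[Section 2]{LiYang2023}), then rescale back. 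The complementary region, where all $|\partial_j\psi|\lesssim\eta$ uniformly, is handled by the sublevel-set part (a) of Theorem \ref{thm:degeneracy_locating_principle} applied to a suitable degeneracy determinant, or by noting that there $\psi$ is approximated by a polynomial with vanishing gradient-norm, i.e. a constant, so its sublevel set is either empty or trivially coverable — feeding back a strictly smaller scale and closing the bootstrap.

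The width lower bound requires care: each rescaling can only shrink widths in a controlled way, and since we iterate $O(1/\eps)$ times with each step a scale change by $\delta^{O(\eps)}$, I must track that all dimensions of the final parallelograms stay $\ge\delta$ — using that $U(n-1)$ as formulated already outputs parallelograms of width at least the relevant scale, and that the thickening in the $x_n$-direction contributes exactly the thickness $\gtrsim\delta/\eta\ge\delta$. Bounded overlap is inherited from the bounded overlap at each stage (the dyadic/trivial partitions and the $U(n-1)$ families are each boundedly overlapping, and $O(1/\eps)$ compositions multiply the overlap constants by an $\eps$-dependent but $\delta$-independent factor). The main obstacle I anticipate is the region where the gradient of $\psi$ is small in \emph{all} directions: there I cannot directly reduce the dimension, and I must argue that iterating the rescaling genuinely gains a factor $\delta^{O(\eps)}$ each time (so the recursion terminates) rather than stalling — this is exactly the role of the "totally degenerate approximation" clause \eqref{eqn:small_hessian_error} of a degeneracy determinant, applied here with $H\phi = \sum_j(\partial_j\phi)^2$ (or a closely related quantity), ensuring that smallness of $\nabla\psi$ forces $\psi$ close to an affine function whose sublevel set is a genuine slab, decoupled for free. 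Getting the exponent $\beta$ and the bookkeeping of error terms $O(\sigma^\beta)$ versus the target scale to line up across all $O(1/\eps)$ iterations is the delicate part.
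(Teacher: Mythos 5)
Your high-level reduction ($\tilde S(n)\Leftarrow S(n)$ via Proposition \ref{prop:tilde_S(n)}, then a gradient-based dimension reduction feeding $U(n-1)$) points in the right direction, and the Pramanik--Seeger flavour is indeed present in the paper's argument. But there are two genuine gaps.

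First, and most importantly, your proposal has no mechanism to actually \emph{perform} the decomposition ``into pieces where $|\partial_j\psi|$ is dyadically constant.'' The set $\{x:|\partial_j\phi(x)|\sim\sigma_j\}$ is a (difference of) sublevel set(s) of the polynomial $\partial_j\phi$, which is not a union of boundedly many parallelograms; decoupling $[-1,1]^n$ into such pieces is itself exactly the statement $S(n)$ applied to $\partial_j\phi$ --- the very thing you are trying to prove. The paper resolves this circularity by running an induction on the \emph{degree} $d$: since $\partial_j\phi$ has degree at most $d-1$, the induction hypothesis on degree supplies the sublevel-set decoupling of $\{|\partial_j\phi|\sim\sigma_j\}$ into genuine parallelograms of width $\ge\sigma_j$ (Step 1 of the main case in Section \ref{sec:general_case}). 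Without the degree induction your dyadic partition is only a partition of sets, not a decoupling, and the whole scheme stalls at the first step. Relatedly, your treatment of the ``all derivatives small'' region via the totally degenerate approximation does not close: with your threshold $\eta\sim\delta^{c\eps}$, knowing $\psi$ is within $O(\eta)$ of a constant says nothing about the much finer sublevel set $\{|\psi|<\delta\}$, and pushing $\eta$ down to $\delta$ forces you back into decoupling the level sets of $\nabla\phi$, i.e.\ back to the degree induction.

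Second, on the pieces where the gradient is large you propose to ``apply uniform decoupling to $g$ in $n-1$ variables,'' where $x_n=g(x')$ is the implicit solution of $\phi=0$. But $U(n-1)$ is a statement about polynomials in $\mathcal P_{n-1,d}$, and $g$ is not a polynomial. Bridging this is not cosmetic: the paper devotes Proposition \ref{prop:pseudo} to it, running a separate induction on scales in which, on each $(\psi,\delta_i)$-flat piece, the pseudo-polynomial is approximated to error $O(\delta_{i+1})$ by the genuine polynomial $-A_0(x')$ extracted from the factorization $\phi=A_0+A_1x_n+A_2x_n^2$, and only then is $\tilde U(n-1)$ (via Proposition \ref{prop:AJM}) invoked. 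Your single top-level bootstrap conflates these two inductions; as written, the step ``apply $U(n-1)$ to $g$'' is not licensed by the hypothesis. There are also smaller points you would need (the rotation lemma guaranteeing a large directional derivative in the frame of the intersected parallelograms $T'$, and the case analysis $w\ge\eta\sigma$ versus $w<\eta\sigma$ needed to preserve the width lower bound $\delta$), but the two items above are the essential missing ideas.
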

\begin{cor}\label{cor:U_tilde_equivalent}
    For $n\ge 1$, $U(n)\iff\tilde U(n)$, and $S(n)\iff\tilde S(n)$.
\end{cor}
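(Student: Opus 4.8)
The two asserted equivalences each reduce to a single forward implication: the reverse directions $\tilde U(n)\implies U(n)$ and $\tilde S(n)\implies S(n)$ were already noted to be trivial, obtained by taking the parallelogram $R$ in the definitions of $\tilde U(n)$ and $\tilde S(n)$ to be $R=[-1,1]^n$ (which has width $1\ge\delta$, so the hypothesis ``width $\ge\delta$'' is met, and for which $2R=[-2,2]^n$, matching Definitions \ref{defn:graphical_polynomial_decoupling} and \ref{defn:sublevel_set_decoupling}). So the plan is to prove $U(n)\implies\tilde U(n)$ and $S(n)\implies\tilde S(n)$ for every $n\ge1$, and for this nearly everything is already in place.

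For the uniform statement, I would simply compose two results: $U(n)$ implies $S(n+1)$ by Theorem \ref{thm:main_U_imply_S}, and $S(n+1)$ implies $\tilde U(n)$ by Lemma \ref{lem:trivial_implications_tilde}. This gives $U(n)\implies\tilde U(n)$ for all $n\ge1$ without any induction. For the sublevel-set statement I would split on $n$: when $n\ge2$, $S(n)$ implies $U(n-1)$ by Lemma \ref{lem:trivial_implications}, and $U(n-1)$ implies $\tilde S(n)$ by Theorem \ref{thm:sublevel_set_decoupling}; the base case $n=1$ is precisely Proposition \ref{prop:delta_thickening}, which says $\tilde S(1)$ holds unconditionally. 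An equally valid but slightly longer route for both statements is a direct induction on $n$: use the monotonicity Lemma \ref{lem:monotonicity} to pass from dimension $n$ to $n-1$, the inductive hypothesis to upgrade to the tilde statement in dimension $n-1$, and then Proposition \ref{prop:tilde_U(n)} (resp. Proposition \ref{prop:tilde_S(n)}) to conclude in dimension $n$, with base cases $\tilde U(1)$ from \cite{Yang2} and $\tilde S(1)$ from Proposition \ref{prop:delta_thickening}.

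There is no substantive obstacle here: this is a bookkeeping corollary that packages the section's main theorem together with the trivial-implication lemmas from Section \ref{sec:sub_parallelogram}. The only thing to double-check is the harmless point, already implicit above, that the specialization $R=[-1,1]^n$ really does recover the unprimed statements verbatim, i.e. that the required containment $R_\delta\subseteq 2R$ and the width bound for $R$ are satisfied; both are immediate.
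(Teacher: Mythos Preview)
Your proof is correct and essentially identical to the paper's: for $U(n)\implies\tilde U(n)$ the paper invokes Theorem \ref{thm:sublevel_set_decoupling} (the slightly stronger form of Theorem \ref{thm:main_U_imply_S}) to get $\tilde S(n+1)$, hence $S(n+1)$, then Lemma \ref{lem:trivial_implications_tilde}; for $S(n)\implies\tilde S(n)$ the paper does exactly your split into $n=1$ (Proposition \ref{prop:delta_thickening}) and $n\ge2$ (Lemma \ref{lem:trivial_implications} then Theorem \ref{thm:sublevel_set_decoupling}).
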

\begin{proof}
    To prove $U(n)\iff \tilde U(n)$, it suffices to prove $U(n)\implies\tilde U(n)$. By Theorem \ref{thm:sublevel_set_decoupling}, $U(n)$ implies $\tilde S(n+1)$, which then implies $S(n+1)$. By Lemma \ref{lem:trivial_implications_tilde}, $\tilde U(n)$ holds. 

    To prove $S(n)\iff \tilde S(n)$, it suffices to prove $S(n)\implies\tilde S(n)$. The case $n=1$ has been solved in Proposition \ref{prop:delta_thickening}, so we may assume $n\ge 2$. But by Lemma \ref{lem:trivial_implications}, $S(n)$ implies $U(n-1)$, which implies $\tilde S(n)$ by Theorem \ref{thm:sublevel_set_decoupling}.
\end{proof}

The rest of this section is devoted to the proof of Theorem \ref{thm:sublevel_set_decoupling}.
\subsection{Preliminary reductions}
We first show that it suffices to reduce to the following lemma.
\begin{lem}\label{lem:Section5}
Let $n\ge 2$. Then $\tilde S(n-1)$ and $\tilde U(n-1)$ together imply $S(n)$.  
\end{lem}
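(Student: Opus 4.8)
\textbf{Proof plan for Lemma \ref{lem:Section5}.}

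The plan is to run an iteration on scales, using $\tilde U(n-1)$ to handle the ``wide'' directions and $\tilde S(n-1)$ to handle the remaining degenerate slice. Given $\phi\in\mathcal P_{n,d}$, a parallelogram $R\sub[-1,1]^n$ of width $\ge\delta$, and the target scale $\delta$, we want to decouple the sublevel set $\{x\in R:|\phi(x)|\le\delta\}$. First I would reduce to $R=[-1,1]^n$ after a harmless rescaling (legitimate because $\mathcal P_{n,d}$ is rescaling invariant, cf.\ Proposition \ref{prop:polynomial_rescaling_invariant}, and because we already track width lower bounds carefully as in Section \ref{sec:sub_parallelogram}). Then I would dyadically pigeonhole the size of $\nabla\phi$ (or more precisely, of $\partial_{x_n}\phi$ after choosing the ``most active'' variable via a trivial partition into $O(1)$ pieces): on the region where some first partial is $\gtrsim 1$ in absolute value, the sublevel set $\{|\phi|\le\delta\}$ is, in that direction, a $\delta$-neighbourhood of a graph, so one can foliate by that variable and apply $\tilde U(n-1)$ to the $(n-1)$-dimensional level hypersurface slice-by-slice, picking up $(\phi\text{-graph})$-decoupling at scale $\delta$ in the transverse variables and then a trivial $\delta$-thickening in the $x_n$ direction; this is exactly the ``reduce to $U(n-1)$'' mechanism behind $S(n+1)\Leftarrow U(n)$ that is alluded to after Theorem \ref{thm:main_U_imply_S}.

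The substantive case is where $|\nabla\phi|$ is small, say $\lesssim\rho$ for a dyadic $\rho$ with $\delta\le\rho\le 1$. Here I would rescale at scale $\rho$: cover $[-1,1]^n$ by parallelograms (cubes, in fact) of side $\sim\rho$, rescale each to $[-1,1]^n$, and observe that $\phi\circ\lambda\cong \rho\,\psi+\text{affine}$ for some $\psi\in\mathcal P_{n,d}$ (again rescaling invariance). On such a cube one gains a factor of $\rho$ in the oscillation, so $\{|\phi|\le\delta\}$ becomes $\{|\psi|\le \rho^{-1}\delta\}$ up to affine terms; removing the affine part (which does not affect sublevel-set geometry after a further $\delta$-thickening, the trick from Proposition \ref{prop:delta_thickening}) and iterating, one improves the scale from $\delta$ to $\rho^{-1}\delta\gg\delta$, and the recursion terminates after $O_\eps(1)$ steps once $\rho\sim\delta^{\eps}$, contributing only $\delta^{-O(\eps)}$ to the constant. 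One also has to handle the ``boundary'' case $\rho$ comparable to the width of the current piece, where the cube degenerates and we instead strip off the shortest dimension and invoke $\tilde S(n-1)$ on the $(n-1)$-dimensional face — this is where $\tilde S(n-1)$ (rather than merely $S(n-1)$) is genuinely needed, because the face is a sub-parallelogram, not a full cube.

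Assembling: after the pigeonholing one has $O(\log(1/\delta))$ dyadic ranges of $\rho$; the large-gradient range is closed by $\tilde U(n-1)$, and the small-gradient range is closed by the $\rho$-rescaling induction-on-scales, whose base cases feed into $\tilde S(n-1)$ on lower-dimensional faces. The width lower bound $\ge\delta$ is maintained throughout because every rescaling multiplies scales by $\rho\ge\delta$ and divides the target by $\rho$, so the net width of the final parallelograms is $\gtrsim\delta$; the bounded-overlap and cardinality bounds follow by the same rescaling bookkeeping as in Proposition \ref{prop:tilde_S(n)}. The logarithmic loss from the dyadic pigeonholing is absorbed into $\delta^{-\eps}$ in the standard way (run the whole argument with $\eps$ replaced by $\eps/C$). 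I expect the main obstacle to be the bookkeeping in the small-gradient case: making sure that after removing affine terms and $\delta$-thickening at each stage, the resulting parallelograms still nest correctly (each contained in $2R'$), still have width $\ge\delta$, and that the overlap function does not blow up through the $O_\eps(1)$ iterations — in other words, it is the same type of ``sub-parallelogram'' care already isolated in Section \ref{sec:sub_parallelogram}, now coupled to a Pramanik–Seeger-style scale induction.
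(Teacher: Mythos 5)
There is a genuine gap: your proposal is missing the engine that drives the paper's proof, namely an induction on the \emph{degree} $d$ of $\phi$. After you dyadically pigeonhole the sizes of the partial derivatives, you must actually \emph{decouple} each region $\{|\partial_j\phi|\sim\sigma_j\}$ (or $\{|\partial_j\phi|\le\delta\}$) into boundedly overlapping parallelograms at cost $O_\eps(\delta^{-\eps})$ before you can work on it — these regions are arbitrary semialgebraic sets, not unions of cubes, and trivially partitioning $[-1,1]^n$ into cubes of side $\rho$ costs an unacceptable power of $\rho^{-1}$ in $\ell^p(L^p)$. The paper's resolution is that $\partial_j\phi$ has degree at most $d-1$, so its sublevel/superlevel sets are decoupled by the induction hypothesis on degree (Section \ref{sec:general_case}, Steps 1--2, including the rotation Lemma \ref{lem:one_derivative_large} needed because the resulting parallelograms $T'$ need not be axis-parallel). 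Your proposal contains no such mechanism, and without it the pigeonholing cannot be converted into a decoupling inequality.

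Two further points. First, in your small-gradient case the step ``removing the affine part \ldots does not affect sublevel-set geometry'' is false: sublevel-set decoupling concerns the \emph{values} of $\phi$, not the shape of its graph, so subtracting an affine function replaces $\{|\phi|\le\delta\}$ by a genuinely different set (only a constant term can be absorbed, and only when it is $O(\delta)$ plus the oscillation, otherwise the piece is empty and discarded); moreover the resulting recursion produces another generic element of $\mathcal P_{n,d}$ at a coarser scale, with no decreasing parameter, so it does not terminate in the claimed $O_\eps(1)$ steps. Second, in the large-gradient case the level surface is the graph of a \emph{pseudo-polynomial} $x_n=\psi(x')$, not a polynomial, so one cannot ``apply $\tilde U(n-1)$ slice-by-slice'' directly; the paper needs Proposition \ref{prop:pseudo}, a Pramanik--Seeger iteration that approximates $\psi$ at each scale $\delta_i$ by the polynomial $-A_0$ and only then invokes $\tilde U(n-1)$ via Proposition \ref{prop:AJM}, followed by the intermediate-scale case analysis ($w\ge\eta\sigma$ versus $w<\eta\sigma$) in Step 3. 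You correctly identify the two broad regimes and the sub-parallelogram/width bookkeeping concerns, but the degree induction and the pseudo-polynomial approximation are the substance of the proof and both are absent.
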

\begin{proof}[Proof of Theorem \ref{thm:sublevel_set_decoupling} assuming Lemma \ref{lem:Section5}]
The proof is by induction on $n$. The base case $n=2$ follows from Corollary \ref{cor:S(2)}. Now take $n\ge 3$, and assume that $U(n-2)$ implies $\tilde S(n-1)$. We would like to prove that $U(n-1)$ implies $\tilde S(n)$. 

Assume that $U(n-1)$ holds, then $U(n-2)$ holds by Lemma \ref{lem:monotonicity}. Then by the induction hypothesis, $\tilde S(n-1)$ holds, so $S(n-1)$ holds, and so $\tilde U(n-2)$ holds by Lemma \ref{lem:trivial_implications_tilde}. By Proposition \ref{prop:tilde_U(n)}, we also have $\tilde U(n-1)$. Since we have both $\tilde S(n-1)$ and $\tilde U(n-1)$, apply Lemma \ref{lem:Section5} gives $S(n)$. But we also have $\tilde S(n-1)$, so applying Proposition \ref{prop:tilde_S(n)}, we have $\tilde S(n)$.
\end{proof}

For the rest of this section, we prove Lemma \ref{lem:Section5}. Our idea is to first prove special cases, in a similar way to \cite[Section 4]{LiYang2023}.

\fbox{Notation} For $x=(x_1,\dots,x_n)$, we write $x'=(x_1,\dots,x_{n-1})$. We also abbreviate $\partial_{x_j}f$ to $\partial_j f$ or simply $f_j$, $1\le j\le n$. We first record the following simple observation.

\begin{prop}\label{prop:AJM}
Assume that $\tilde U(n-1)$ holds. Then the conclusion of $S(n)$ holds in the special case when $\phi(x)=x_n-P(x')$, where $P\in\mathcal P_{n-1,d}$.
\end{prop}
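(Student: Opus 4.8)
The statement to prove is Proposition \ref{prop:AJM}: assuming $\tilde U(n-1)$, the conclusion of $S(n)$ holds for $\phi(x) = x_n - P(x')$ with $P \in \mathcal P_{n-1,d}$. The key observation is that the sublevel set $\{x \in [-1,1]^n : |\phi(x)| \le \delta\} = \{x : |x_n - P(x')| \le \delta\}$ is precisely (a truncation of) the vertical $\delta$-neighbourhood $N_\delta^P([-1,1]^{n-1})$ of the graph of $P$. So decoupling this sublevel set into parallelograms is literally the same problem as $P$-$\ell^p(L^p)$ decoupling of $[-1,1]^{n-1}$ at scale $\delta$, which is exactly what $\tilde U(n-1)$ (in fact $U(n-1)$) provides.

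\textbf{Steps.} First, I would identify the sublevel set with $N_\delta^P([-1,1]^{n-1}) \cap [-1,1]^n$ and observe that, up to the harmless truncation in the $x_n$-variable to $[-1,1]$, decoupling the sublevel set is the same as $P$-decoupling $[-1,1]^{n-1}$ at scale $\delta$. Second, I would invoke $\tilde U(n-1)$ (which follows from the hypothesis $\tilde U(n-1)$ stated in the proposition; note $U(n-1) \iff \tilde U(n-1)$ is not yet available at this point in the argument, so we genuinely use the $\tilde U$ form, applied to the full cube $[-1,1]^{n-1}$ which is a parallelogram of width $1 \ge \delta$) to obtain a boundedly overlapping covering $\mathcal R_\delta$ of $[-1,1]^{n-1}$ by $(P,\delta)$-flat parallelograms $\omega \subseteq [-2,2]^{n-1}$, each of width $\ge \delta$, with $\#\mathcal R_\delta \lesssim \delta^{-(n-1)}$, such that $[-1,1]^{n-1}$ can be $P$-$\ell^p(L^p)$ decoupled into $\mathcal R_\delta$ at scale $\delta$ at cost $\lesssim \delta^{-\eps}$, for $2 \le p \le \frac{2(n+1)}{n-1}$. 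Third, I would transfer this to the $x_n$-sublevel language: for each $\omega \in \mathcal R_\delta$, the set $N_\delta^P(\omega)$ is equivalent to a parallelogram $S_\omega \subseteq \R^n$ of width $\gtrsim \delta$ (the $(P,\delta)$-flatness of $\omega$ guarantees $N_\delta^P(\omega)$ is an almost-parallelogram whose new dimension has length $\sim \delta$); on $S_\omega$ we have $|x_n - P(x')| \lesssim \delta$, giving item (2). The collection $\mathcal S_\delta := \{S_\omega : \omega \in \mathcal R_\delta\}$ inherits bounded overlap and the cardinality bound $\#\mathcal S_\delta \lesssim \delta^{-n}$ (one gains a factor $\delta^{-1}$ over $\#\mathcal R_\delta$ because each flat $\omega$ of width $\ge \delta$ sits in a cube of a scale whose $\delta$-neighbourhood... actually $\#\mathcal R_\delta\lesssim\delta^{-(n-1)}$ and the width-$\ge\delta$ parallelograms already give $\lesssim\delta^{-n}$ after one trivially subdivides, which I would make precise). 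Fourth, I would translate the $P$-decoupling inequality \eqref{eqn:defn_decoupling} for functions Fourier-supported on $N_\delta^P(\omega)$ into the sublevel-set decoupling inequality for functions Fourier-supported on $S_\omega \cap \{|x_n - P(x')| \le \delta\}$ — these Fourier supports agree up to the $C$-equivalence, so the two decoupling constants agree up to constants, yielding item (3) at cost $\lesssim \delta^{-\eps}$.

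\textbf{Main obstacle.} The content is essentially bookkeeping: matching the width lower bounds and the cardinality count through the equivalence $N_\delta^P(\omega) \sim S_\omega$, and checking that the truncation to $[-1,1]$ in $x_n$ does not create boundary rectangles violating the width bound (it does not, since one can always enlarge to $[-2,2]$ and the definition already allows $S_\delta \subseteq 2R$). The one point requiring a little care is the passage from $(P,\delta)$-flat parallelograms $\omega$ of width $\ge \delta$ in $\R^{n-1}$ to honest parallelograms $S_\omega$ of width $\ge \delta$ (not just $\gtrsim \delta$, which is fine up to absorbing constants into the overlap function) in $\R^n$; this is exactly the kind of ``trivial partition into $O(1)$ pieces'' normalisation used in Proposition \ref{prop:tilde_U(n)} via \cite[Proposition 5.24]{LiYang2024}, and I would cite it rather than redo it. Note the $p$-range in $S(n)$ is $2 \le p \le \frac{2(n+1)}{n-1}$, which matches the range available for $(n-1)$-variate decoupling in the sense that $N_\delta^P(\omega)$ being a thin slab, the relevant exponent is governed by the ambient dimension $n-1$ for the surface — consistent with $\tilde U(n-1)$ supplying the range $2 \le p \le \frac{2(n+1)}{n-1}$ via its embedded sublevel/slab decoupling; I would double-check that the exponent in item (3) of $\tilde S(n)$ is indeed covered, and if $\tilde U(n-1)$ only directly gives $2 \le p \le \frac{2(n+2)}{n-1}$ I would note that restricting to the narrower range in $S(n)$ is automatic.
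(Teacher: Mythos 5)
Your proposal is correct and follows essentially the same route as the paper's own argument: both identify the sublevel set $\{|x_n-P(x')|\le\delta\}$ with the vertical neighbourhood $N_\delta^P$ of the graph of $P$ over the projection onto the first $n-1$ coordinates, apply $\tilde U(n-1)$ there, and then convert the $(P,\delta)$-flat pieces $\omega$ into parallelograms $S_\omega\subseteq 2R$ of width $\ge\delta$ by a $\delta$-thickening/equivalence step. Your exponent check is also fine as stated, since $\tilde U(n-1)$ supplies exactly the range $2\le p\le \tfrac{2(n+1)}{n-1}$ required by $S(n)$.
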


\subsection{Case of pseudo-polynomials}

Our next step is to prove decoupling in the following setup. 
\begin{defn}[Pseudo-polynomials]\label{defn:pseudo}
    Suppose that $\phi\in \mathcal P_{n,d}$ is such that $|\partial_{n} \phi|\gtrsim_{n,d} 1$ on $[-1,1]^n$ and that the equation $\phi(x)=0$ has a unique solution $x_n=\psi(x')$ on the whole of $x'\in [-1,1]^{n-1}$. We call $\psi$ a pseudo-polynomial of degree $d$ (defined by $\phi(x',\psi(x'))=0$).
\end{defn}
One can check that $\psi$ has bounded derivatives of all orders. Also, to decouple the sublevel set $|\phi|\le \delta$, it is essentially equivalent to $\psi$-decouple $[-1,1]^{n-1}$ into $(\psi,\delta)$-flat parallelograms.

\begin{prop}\label{prop:pseudo}
Assume that $\tilde U(n-1)$ holds. Let $\psi:[-1,1]^{n-1}\to \R$ be a pseudo-polynomial of degree $d$ defined by $\phi(x',\psi(x'))=0$. Then for $2\le p\le \frac{2(n+1)}{n-1}$, $\eps\in (0,1)$, $\delta\in (0,1)$, the unit square $[-1,1]^{n-1}$ can be $\psi$-$\ell^p(L^p)$ decoupled into boundedly overlapping $(\psi,\delta)$-flat parallelograms $T\in \mathcal T$ at scale $\delta$ at the cost of $O_\eps (\delta^{-\eps})$, such that each $T$ has width at least $\delta$. The implicit constants here do not depend on $\phi,\delta$.
\end{prop}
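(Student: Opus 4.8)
The plan is to run a Pramanik--Seeger style iteration on scales, reducing the decoupling of the pseudo-polynomial $\psi$ at a fine scale $\delta$ to the decoupling of a related pseudo-polynomial at a coarser scale, at each step paying only an $O(\delta^{-\eps})$ loss, and using $\tilde U(n-1)$ (via its consequence, the special case of $S(n)$ in Proposition \ref{prop:AJM}, together with Proposition \ref{prop:tilde_S(n)}-type reductions) to run the induction. Concretely, write $N = \eps^{-1}$ and $\delta_j = \delta^{j\eps}$, and let $D(\delta)$ denote the best $\psi$-decoupling constant of $[-1,1]^{n-1}$ into $(\psi,\delta)$-flat parallelograms of width $\ge \delta$ taken from the admissible family. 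The goal is the bootstrap inequality $D(\delta_{j+1}) \le C_\eps' \delta^{-\eps^2} D(\delta_j)$, which upon $N$-fold iteration gives $D(\delta) \lesssim_\eps \delta^{-\eps}$. So the heart of the matter is: given that $[-1,1]^{n-1}$ has been $\psi$-decoupled into $(\psi,\delta_j)$-flat parallelograms $T_j$, further decouple each $T_j$ into $(\psi,\delta_{j+1})$-flat pieces at acceptable cost.

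For the inductive step I would fix one $T_j$, rescale it to $[-1,1]^{n-1}$ via $\lambda_{T_j}$, and observe that on $T_j$ the function $\psi$ is, up to affine terms and up to an error of size $O(\delta_j \cdot (\text{diam in transverse directions}))$, equal to its second-order Taylor polynomial at the center of $T_j$. Because $T_j$ is $(\psi,\delta_j)$-flat, after rescaling and subtracting the affine part, $\psi\circ\lambda_{T_j}$ looks like $\delta_j$ times a function whose quadratic part is a fixed quadratic form $Q$; the iteration splits according to the rank/signature of $Q$. When $Q$ is genuinely nondegenerate one invokes Bourgain--Demeter (Theorem \ref{thm:Bourgain_Demeter}) to decouple into $\delta^{1/2}$-caps, gaining a full scale. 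When $Q$ is degenerate, the degeneracy is located on a lower-dimensional subvariety, and one uses the sublevel set decoupling coming from $\tilde S(n-1)$/$\tilde U(n-1)$ together with Proposition \ref{prop:AJM} to peel off the bad set; off the bad set the Hessian is bounded below and Bourgain--Demeter applies again. This is precisely the mechanism by which $\tilde U(n-1)$ enters — it is needed both for the lower-dimensional decoupling of the transverse slices and for the sublevel-set step locating where the Hessian of $\psi$ degenerates. The admissible parallelograms at each stage must be tracked carefully so that they remain of the form "interval times lower-dimensional parallelogram" (in appropriate rescaled coordinates) with width bounded below by the current scale.

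The main obstacle I anticipate is the bookkeeping of the width lower bound and the $(\psi,\delta)$-flatness through the rescaling at each stage: a naive rescaling of a thin parallelogram $T_j$ back to $[-1,1]^{n-1}$ can destroy the width lower bound, which is exactly the subtlety flagged in Section \ref{sec:sub_parallelogram}. The fix is to only rescale parallelograms whose shortest dimension is $\ge$ the current scale, and when the shortest dimension is already comparable to the scale, to first peel off that dimension and apply the $(n-2)$-variable statement $\tilde U(n-2)$ (which is available since $\tilde U(n-1)\implies \tilde U(n-2)$ by Lemma \ref{lem:monotonicity} and Proposition \ref{prop:tilde_U(n)}) before continuing. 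A secondary technical point is verifying that $\psi$ and all its rescaled avatars genuinely remain pseudo-polynomials of bounded degree with uniformly controlled $C^k$ norms — this follows from the implicit function theorem applied to $\phi$ and the fact that $|\partial_n\phi|\gtrsim 1$, but the uniformity of the constants must be checked to legitimately close the iteration. Once these geometric preliminaries are in place, the iteration itself is a routine, if lengthy, adaptation of the light-cone argument from \cite[Section 8]{BD2015}.
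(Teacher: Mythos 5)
Your setup (the Pramanik--Seeger iteration with $\delta_j=\delta^{j\eps}$ and the bootstrap inequality $D(\delta_{j+1})\le C_\eps'\delta^{-\eps^2}D(\delta_j)$) matches the paper, but the mechanism you propose for the single inductive step is not the paper's and, as described, does not close. The missing idea is the algebraic approximation that exploits the defining relation $\phi(x',\psi(x'))=0$. Writing $\phi(x)=A_0(x')+A_1(x')x_n+A_2(x',x_n)x_n^2$ and normalising $A_1(0)=1$ after an initial trivial decoupling into $\delta^\eps$-squares (so that $|A_1-1|\lesssim\delta^\eps$ on the relevant square), one gets on each $(\psi,\delta_i)$-flat piece
\begin{equation*}
\psi+A_0=(1-A_1)\psi-A_2\psi^2=O(\delta^\eps\delta_i)+O(\delta_i^2)=O(\delta_{i+1}),
\end{equation*}
so the pseudo-polynomial $\psi$ is replaced, to within the \emph{next} scale, by the genuine polynomial $-A_0\in\mathcal P_{n-1,d}$, and the one-step decoupling from $\delta_i$ to $\delta_{i+1}$ is then handed off entirely to Proposition \ref{prop:AJM} (i.e., to the hypothesis $\tilde U(n-1)$). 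No curvature analysis of $\psi$ is performed at all.

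Your alternative — second-order Taylor expansion on $T_j$ followed by a case split on the rank of the quadratic form, with Bourgain--Demeter in the nondegenerate case and a sublevel-set decoupling of the Hessian degeneracy locus otherwise — has two concrete problems. First, the second-order Taylor polynomial of $\psi$ on a $(\psi,\delta_j)$-flat parallelogram is not accurate to $O(\delta_{j+1})$ in general (flatness only controls the affine approximation to within $\delta_j$), so the reduction to a fixed quadratic form is not legitimate at the scale you need. Second, and more fundamentally, the sublevel-set step you invoke to locate where $\det D^2(\psi\circ\lambda_{T_j})$ degenerates would require sublevel-set decoupling for the Hessian determinant of a \emph{pseudo}-polynomial, which is not supplied by $\tilde S(n-1)$ or Proposition \ref{prop:AJM} (those apply to polynomials), and which is essentially the statement being proved — so the argument is circular. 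In effect your plan attempts to prove a full uniform decoupling theorem for pseudo-polynomials via the degeneracy-locating machinery, whereas the proposition only needs the observation that a pseudo-polynomial agrees with an honest polynomial to one scale better than its current flatness, after which the assumed polynomial decoupling does all the work. You should also note that the claimed exponent range is $2\le p\le\frac{2(n+1)}{n-1}$, which is inherited from Proposition \ref{prop:AJM} rather than from any direct Bourgain--Demeter application to $\psi$.
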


\begin{proof}
    The proof is by a Pramanik-Seeger type induction on scales, similar to that of Proposition \ref{prop:Pramanik_Seeger}. We provide the details below.

\fbox{Notation} All implicit constants below in this section are allowed to depend on $n,d$. All decoupling inequalities below are $\ell^p(L^p)$ where $2\le p\le \frac{2(n+1)}{n-1}$.

    \subsubsection{Preliminary reductions}
       
    We first apply the triangle and H\"older's inequalities to $\psi$-decouple $[-1,1]^{n-1}$ into $(\psi,\delta^\eps)$-flat squares $S_1$ of side length $\delta^\eps$, at cost $\delta^{-O(\eps)}$.

    \subsubsection{Induction on scales}
    We now set up the induction. Define $N=\eps^{-1}$ (of course we may assume $\eps\in \N^{-1}$), and define
    \begin{equation}
        \delta_i:=\delta^{i\eps},\quad 1\le i\le N.
    \end{equation}
    Let $i\ge 1$ and assume we have constructed a boundedly overlapping cover $\mathcal S_i$ of $S_1$ by $(\psi,\delta_i)$-flat parallelograms (the case $i=1$ is set up by $\mathcal S_1:=\{S_1\}$). We will construct a boundedly overlapping cover $\mathcal S_{i+1}$ of $S_1$ by $(\psi,\delta_{i+1})$-flat parallelograms. Also, denote by $D(\delta_i)$ the smallest decoupling constant corresponding to $\mathcal S_i$, and we will show that $D(\delta_i)\lesssim \delta_i^{\eps}$ by proving a bootstrap inequality of the form:
    \begin{equation}\label{eqn:bootstrap2}
        D(\delta_{i+1})\le C'_\eps D(\delta_i)\delta^{-\eps^2}, \quad \forall 1\le i\le N,
    \end{equation}
    for some constant $C'_\eps$. Then the result follows from iterating the boostrap inequality $N$ times.

    \subsubsection{Scale analysis}
    Suppose $i\ge 1$ and we have decoupled into some $S_i\in \mathcal S_i$. We want to further decouple $S_i$ at scale $\delta_{i+1}$. To this end, we first apply a few reductions.

    First, by a translation we may assume $0$ is a vertex of $S_i$. By another translation, we may assume $\psi(0)=0$. By a shear transform in the $z$ direction, we may assume $\nabla \psi(0)=0$. Using the fact that $\psi$ is $\delta_i$-flat over $S_i$, we have $|\psi|\le \delta_i$ over $S_i$.

    Write
    \begin{equation*}        \phi(x)=A_0(x')+A_1(x')x_n+A_2(x',x_n)x_n^2.
    \end{equation*}
    By a rescaling in the graphical direction, we may assume $A_1(0)=1$. Thus
    \begin{equation}\label{eqn:Feb_10}
        |A_1(x')-1|\lesssim \delta^\eps,\quad \forall x'\in S_1.
    \end{equation}
    
    \subsubsection{Approximation}
    We now approximate $\psi(x')$ by $-A_0(x')$ at scale $O(\delta_{i+1})$ on $[-1,1]^{n-1}$. Indeed, using $\phi(x',\psi(x'))=0$, \eqref{eqn:Feb_10} and $|\psi|\le \delta_i$, we compute that
    \begin{align*}
        \psi+A_0
        &=A_1\psi+(1-A_1)\psi+A_0
        =(1-A_1)\psi+O(\delta_i^2)\\
        &=O(\delta^\eps\delta_i)+O(\delta_i^2)        =O(\delta_{i+1}).
    \end{align*}
    Therefore, to decouple $S_i$ at scale $\delta_{i+1}$ for $\psi$, we equivalently decouple $S_i$ at scale $\delta_{i+1}$ for $A_0$, which is an element of $\mathcal P_{n-1,d}$ (after a trivial decoupling and rescaling if necessary). Applying Proposition \ref{prop:AJM} finishes the proof of the boostrap inequality \eqref{eqn:bootstrap2} and hence Proposition \ref{prop:pseudo}. Note that there are no rescaling in the coordinate space ($x_1,\dots, x_n$) involved, so the width lower bound $\delta$ given by Proposition \ref{prop:AJM} is retained.
\end{proof}

Proposition \ref{prop:pseudo} has the following corollary, which says that the conclusion of $S(n)$ holds when one partial derivative of $\phi$ is $\gtrsim 1$ on $[-1,1]^n$.

\begin{cor}\label{cor:sublevel_set_Pzsim1}
Assume that $\tilde U(n-1)$ holds. Suppose that $\phi\in \mathcal P_{n,d}$ is such that $|\partial_n \phi|\gtrsim 1$ on $[-1,1]^n$. Then for $2\le p\le \frac{2(n+1)}{n-1}$, $\eps\in (0,1)$, $\delta\in (0,1)$, the sublevel set 
\begin{equation*}
    \{x\in [-1,1]^n:|\phi(x)|\le \delta\}
\end{equation*}
can be $\ell^p(L^p)$-decoupled into boundedly overlapping parallelograms $T\in \mathcal T$ with width at least $\delta$ at the cost of $O_\eps (\delta^{-\eps})$, on each of which $|\phi|\lesssim \delta$. The implicit constants here do not depend on $\phi,\delta$.
\end{cor}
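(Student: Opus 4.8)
\textbf{Proof proposal for Corollary \ref{cor:sublevel_set_Pzsim1}.}

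The plan is to reduce the sublevel set decoupling for $\phi$ to the pseudo-polynomial decoupling already established in Proposition \ref{prop:pseudo}. First I would use the hypothesis $|\partial_n\phi|\gtrsim 1$ on $[-1,1]^n$: after a dyadic decomposition and rescaling (trivial decoupling of $[-1,1]^n$ into $O(1)$ smaller cubes), we may assume $\partial_n\phi$ has a fixed sign throughout, so $x_n\mapsto\phi(x',x_n)$ is strictly monotone on each vertical fiber. Hence the set where $\phi=0$ is either empty (in which case, by the uniform lower bound on $|\partial_n\phi|$ together with $\max|\phi|\le 1$, the whole sublevel set $\{|\phi|\le\delta\}$ is contained in an $O(\delta)$-neighborhood of a graph or is empty, and we decouple trivially) or is a single graph $x_n=\psi(x')$ over (a subset of) $[-1,1]^{n-1}$; by the implicit function theorem $\psi$ has bounded derivatives of all orders, i.e.\ $\psi$ is a pseudo-polynomial of degree $d$ in the sense of Definition \ref{defn:pseudo}.

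Next I would observe that, since $|\partial_n\phi|\sim 1$ near the zero set, the sublevel set $\{x\in[-1,1]^n:|\phi(x)|\le\delta\}$ is comparable to the $\delta$-vertical neighborhood $N^\psi_{C\delta}(\Omega)$ of the graph of $\psi$ over the relevant domain $\Omega\subseteq[-1,1]^{n-1}$: indeed $|\phi(x',x_n)|\le\delta$ forces $|x_n-\psi(x')|\lesssim\delta$ and conversely. The portion of the domain $\Omega$ may be a proper subset of $[-1,1]^{n-1}$ (where the graph exits the cube), but this is handled by a $\delta$-thickening as in the proof of Proposition \ref{prop:delta_thickening}: near the boundary of $\Omega$ we simply add $O(1)$ extra parallelograms of width $\gtrsim\delta$ on which trivially $|\phi|\lesssim\delta$, using that $\partial_n\phi$ is of fixed sign there too. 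Then I would apply Proposition \ref{prop:pseudo} to $\psi$-$\ell^p(L^p)$-decouple $[-1,1]^{n-1}$ (restricted to $\Omega$, which only costs a further trivial partition) into boundedly overlapping $(\psi,\delta)$-flat parallelograms $T'$ of width at least $\delta$ at cost $O_\eps(\delta^{-\eps})$; pulling back through the vertical-neighborhood correspondence, $N^\psi_\delta(T')$ thickened by $O(\delta)$ gives a boundedly overlapping family of almost-parallelograms $T$ in $\R^n$ of width $\gtrsim\delta$ covering $\{|\phi|\le\delta\}$, on each of which $|\phi|\lesssim\delta$, and the decoupling cost is preserved.

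The main obstacle I expect is bookkeeping rather than conceptual: carefully tracking that the equivalence between $\{|\phi|\le\delta\}$ and $N^\psi_{C\delta}(\text{graph})$ is uniform in $\phi$ (with constants depending only on $n,d$ and the implicit constant in $|\partial_n\phi|\gtrsim 1$), that the width lower bound $\delta$ — not merely $\delta^{1/2}$ — survives all the thickening and pull-back steps, and that the decomposition respects the requirement $S_\delta\subseteq 2R$ when this is later upgraded to $\tilde S(n)$. None of these require new ideas; they are instances of the $\delta$-thickening trick and the standard dictionary between graphs and their vertical neighborhoods. One should also note that the range of $p$ here is $2\le p\le\frac{2(n+1)}{n-1}$, matching exactly the range available in Proposition \ref{prop:pseudo}, so no loss occurs in the exponent.
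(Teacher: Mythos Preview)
Your proposal is correct and follows essentially the same route as the paper: reduce to Proposition~\ref{prop:pseudo} by using $|\partial_n\phi|\gtrsim 1$ and the implicit function theorem to realize the sublevel set as a $\delta$-vertical neighborhood of a pseudo-polynomial graph. The only cosmetic difference is that the paper handles the boundary by first extending $\phi$ to a slightly larger cube $[-1-c_1,1+c_1]^n$ (using that $\phi\in\mathcal P_{n,d}$, so the bound $|\partial_n\phi|\gtrsim 1$ persists) so that $\psi$ is defined on all of $[-1,1]^{n-1}$, whereas you patch the boundary with $\delta$-thickening; both are equally valid.
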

\begin{proof}
Since $|\partial_n \phi|\gtrsim 1$ over $[-1,1]^n$, there exists some $c_1(n,d)>0$ such that $|\partial_n \phi|\gtrsim 1$ over $[-1-c_1,1+c_1]^n$. If there exists $x\in [-1,1]^n$ such that $\phi(x)=0$, then by the implicit function theorem, there must exist a square $U\sub [-1-c_1,1+c_1]^{n-1}$ of length $\sim 1$ such that $\phi(x)=0$ defines an implicit function $x_n=\psi(x')$ for $x'\in U$. By cutting the $x'$ coordinate plane $[-1-c_1,1+c_1]^{n-1}$ into $O(1)$ many tiny squares, we can restrict ourselves to each such square. Then the result follows from applying Proposition \ref{prop:pseudo} following a harmless rescaling (see, for instance, \cite[Proposition 5.24]{LiYang2024}).
\end{proof}

\subsection{General case}\label{sec:general_case}
We now prove Lemma \ref{lem:Section5}. The idea is by induction on the degree $d$. For $d=0$ this is trivial. Assuming the result holds for $d-1\geq 0$, we now prove it for degree $d$, through the following steps.

\subsubsection{Dyadic decomposition}\label{sec:subsub_dyadic_decomposition}
We dyadically decompose $[-1,1]^n$ into subsets where $|\partial_1 \phi|\le \delta$ or $|\partial_1 \phi|\sim \sigma_1$ for some dyadic $\delta\le \sigma_1\lesssim 1$. We do the same for $\partial_j \phi$ for all $2\le j\le n$. It gives rise to $O(1)$ types of subsets according to the values of $\partial_j \phi$, namely, on each subset, for each $1\le j\le n$, either $|\partial_j \phi|\le \delta$, or is dyadically a constant $\sigma_j\ge \delta$. By trivial decoupling and losing $O(|\log \delta|^n)$, it suffices to decouple each such type of subset.

\subsubsection{Lower dimensional case}\label{sec:subsub_bivariate}
We first deal with an easier case where $|\partial_n \phi|\le \delta$. By symmetry, this solves the other cases where $|\partial_j \phi|\le \delta$ for some $j$.

Since $\partial_n \phi$ has degree at most $d-1$, we can apply the induction hypothesis to $\partial_n \phi$ to decouple $\{|\partial_n \phi|\le\delta\}$ into boundedly overlapping rectangles $T'_n$ of width at least $\delta$, on each of which we have $|\partial_n \phi|\lesssim \delta$. At the same time, we apply the induction hypothesis to other $\partial_j \phi$ to decouple $\{|\partial_j \phi|\le\delta\}$ (or $\{|\partial_j \phi|\sim\sigma_j\}$) into boundedly overlapping rectangles $T_j'$ with width at least $\sigma_j\ge \delta$. If $\cap_{j=1}^n T_j'\ne \varnothing$, then by a $\delta$-thickening, we may assume that $T':=\cap_{j=1}^n T_j'\ne \varnothing$ is still essentially an almost parallelogram with width at least $\delta$.

Thus, to decouple $T'\cap \{x:|\phi(x)|\le \delta\}$, by projection property, we decouple $\{x':|\phi(x',c)|\lesssim \delta\}$, where $c$ is the last coordinate of the centre of $T'$. But this follows from $\tilde S(n-1)$.

\subsubsection{Main case}
We now deal with the main case when $|\partial_j \phi|\sim \sigma_j\ge \delta$ for all $1\le j\le n$. We only need to deal with the case when $\partial_j \phi$ are all positive, since the other cases follow similarly. We may also assume $\sigma_n= \sigma:=\max\{\sigma_1,\dots,\sigma_{n}\}$ without loss of generality.

\underline{Step 1: Applying induction hypothesis.}

\smallskip

Since $\partial_n \phi$ has degree at most $d-1$, by the induction hypothesis, we may decouple $\{\partial_n \phi\sim \sigma\}$ into boundedly overlapping rectangles $T'_n$ of width at least $\sigma\ge\delta$, on each of which we still have $\partial_n \phi\sim \sigma$. (This requires a careful choice of the constant ratio in the dyadic decomposition in Section \ref{sec:subsub_dyadic_decomposition}; see \cite[Section 4.3.1]{LiYang2023}.) Similarly, for $1\le j\le n-1$, we may decouple $\{\partial_j \phi\sim \sigma_j\}$ into boundedly overlapping rectangles $T'_j$ of width at least $\sigma_j\ge\delta$, on each of which we still have $\partial_j \phi\sim \sigma_j$. Taking the intersection and applying a $\delta$-thickening trick if necessary, we may assume that $T':=\cap_{i=1}^n T'_j$ has width at least $\delta$.

\underline{Step 2: Rotation.}\smallskip

Fix a rectangle $T'$. By a translation we may assume $T'$ is centred at $0$. Let $\rho$ be an orthonormal transformation such that $\rho^{-1}(T')$ is axis-parallel. We have the following lemma.
\begin{lem}\label{lem:one_derivative_large}
    Let $\tilde \phi:=\phi\circ \rho$. Then there exists some $1\le j\le n$ such that $| \partial_j\tilde\phi|\sim \sigma$ on $\rho^{-1}(T')$.
\end{lem}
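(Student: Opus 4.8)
The plan is to argue by contradiction: suppose that $|\partial_j \tilde\phi|\ll \sigma$ on $\rho^{-1}(T')$ for every $1\le j\le n$, and derive a contradiction with the fact that $\partial_n\phi\sim\sigma$ on $T'$ (recall $\sigma=\max_j\sigma_j$ is attained at $j=n$). The key observation is the chain rule: since $\tilde\phi=\phi\circ\rho$ with $\rho$ orthonormal, we have $\nabla\tilde\phi(y)=\rho^{\mathsf T}\nabla\phi(\rho y)$, so the gradient vectors are related by the orthogonal matrix $\rho^{\mathsf T}$. In particular, the Euclidean norm is preserved: $|\nabla\tilde\phi(y)|=|\nabla\phi(\rho y)|$ for all $y$. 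On $T'$ we know $\partial_n\phi\sim\sigma$, hence $|\nabla\phi|\gtrsim\sigma$ on $T'$, and therefore $|\nabla\tilde\phi|\gtrsim\sigma$ on $\rho^{-1}(T')$. But $|\nabla\tilde\phi|\le\sqrt n\,\max_j|\partial_j\tilde\phi|$, so some coordinate partial derivative of $\tilde\phi$ must be $\gtrsim\sigma/\sqrt n\sim\sigma$ on a suitable point of $\rho^{-1}(T')$.

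To upgrade this from a pointwise statement at one point to the statement ``$|\partial_j\tilde\phi|\sim\sigma$ on all of $\rho^{-1}(T')$'' for a single fixed index $j$, I would use the flatness of $T'$ together with the fact that $\tilde\phi$ is a polynomial of degree $\le d$ with controlled coefficients. Concretely: the entries of $\nabla\phi$ are polynomials of degree $\le d-1$ with $O(1)$ coefficients, so they are Lipschitz with an $O(1)$ constant; since each $\partial_j\phi$ equals either a quantity of size $\sim\sigma_j\le\sigma$ or a quantity $\le\delta\le\sigma$ on all of $T'$, and $T'$ has diameter $\le O(1)$, the oscillation of $|\nabla\phi|$ across $T'$ is controlled, and in fact $\partial_j\phi$ does not change sign on $T'$ for each $j$ (this was arranged in the Main case: all $\partial_j\phi$ are positive). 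Because $\rho$ is an isometry, $\partial_j\tilde\phi = \sum_k \rho_{kj}\partial_k\phi\circ\rho$ is a fixed linear combination of the $\partial_k\phi$'s, each of which has a definite sign and size $\sim\sigma_k$ (or $\le\delta$) throughout $T'$; hence each $\partial_j\tilde\phi$ either has size $\lesssim\sigma$ throughout, and at least one of them has size $\gtrsim\sigma$ at some point, hence $\sim\sigma$ throughout $\rho^{-1}(T')$ by the Lipschitz bound and the fact that on $\rho^{-1}(T')$ the relevant dyadic scale cannot drop. I would pick $j$ to be the index realising $\max_j\sup_{\rho^{-1}(T')}|\partial_j\tilde\phi|$, which is $\gtrsim\sigma$ by the norm-preservation argument above, and then verify $|\partial_j\tilde\phi|\sim\sigma$ on the whole rectangle using the Lipschitz estimate and the smallness of the rectangle relative to $\sigma$ — or, if this index-picking causes a scale mismatch on part of the rectangle, first perform one more harmless dyadic subdivision of $\rho^{-1}(T')$ into $O(1)$ pieces.

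The main obstacle I anticipate is the last point: ensuring that the single index $j$ works uniformly on \emph{all} of $\rho^{-1}(T')$ rather than merely at one point, i.e. controlling the variation of $\nabla\tilde\phi$ over the rectangle. This is where the hypothesis that $T'$ arose from a dyadic decomposition with a carefully chosen constant ratio (as cited from \cite[Section 4.3.1]{LiYang2023}) is essential: the rectangle $T'$ is small enough in every direction that the Lipschitz oscillation of each $\partial_k\phi$ is a small fraction of $\sigma$, so dyadic scales are stable across $T'$. Once that stability is in hand, the chain rule plus the pigeonhole choice of $j$ closes the argument. I would also note that, since we may freely subdivide $\rho^{-1}(T')$ into $O(1)$ axis-parallel subrectangles without affecting any of the bookkeeping, there is no real loss in assuming the dyadic scale of each $\partial_j\tilde\phi$ is constant on the piece under consideration.
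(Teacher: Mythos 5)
Your plan is essentially the paper's argument: the paper isolates the pigeonhole step as an elementary lemma (Lemma \ref{lem:elementary}) applied to the \emph{constant} reference vector $\vec\sigma=(\sigma_1,\dots,\sigma_n)$, which makes the index $j$ automatically independent of the point, and then transfers the lower bound to $\rho^{T}\nabla\phi(x)$ using $|\partial_k\phi-\sigma_k|\le c\sigma_k$ on $T'$ with $c=(2n)^{-1/2}$ from the dyadic decomposition. One small caution: the uniformity over $T'$ comes from this dyadic pinning of each $\partial_k\phi$ to $\sigma_k$ (forced by the choice of ratio in the decomposition), not from a Lipschitz-in-$x$ bound or from $T'$ being geometrically small --- $T'$ can have diameter $O(1)$, so the Lipschitz mechanism alone would not close the argument.
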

To prove the lemma, we first prove another elementary lemma.
\begin{lem}\label{lem:elementary}
    Let $n\in \N$. Then there exists some $c=(2n)^{-1/2}$ such that the following holds. Let $\vec \sigma=(\sigma_j)\in \R^n$, and assume $\vec v=(v_j)\in \R^n$ is such that $|v_j-\sigma_j|\le c|\sigma_j|$ for all $1\le j\le n$. Then for every orthogonal transformation $\rho:\R^n\to \R^n$, there exists $j=j(\rho,\vec \sigma)$ such that $|(\rho \vec v)_j|\ge c \norm{\vec \sigma}$. Here $\norm{\cdot}$ is the standard $\ell^2$ norm on $\R^n$.
\end{lem}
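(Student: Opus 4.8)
The plan is to deduce Lemma~\ref{lem:elementary} from two elementary observations: an orthogonal transformation preserves the Euclidean norm, and every vector in $\R^n$ of norm $L$ has some coordinate of absolute value at least $L/\sqrt n$. No geometric or decoupling input is required; this is a pure linear-algebra warm-up.

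First I would promote the coordinatewise closeness of $\vec v$ to $\vec\sigma$ to closeness in the $\ell^2$ norm. Squaring the hypothesis $|v_j-\sigma_j|\le c|\sigma_j|$ and summing over $1\le j\le n$ gives $\norm{\vec v-\vec\sigma}^2\le c^2\sum_{j}\sigma_j^2=c^2\norm{\vec\sigma}^2$, hence $\norm{\vec v-\vec\sigma}\le c\norm{\vec\sigma}$. Since $c=(2n)^{-1/2}<1$, the triangle inequality then gives the lower bound $\norm{\vec v}\ge (1-c)\norm{\vec\sigma}$.

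Next, since $\rho$ is orthogonal we have $\norm{\rho\vec v}=\norm{\vec v}\ge (1-c)\norm{\vec\sigma}$. Applying the pigeonhole principle to the coordinates of $\rho\vec v$ --- from $\norm{\rho\vec v}^2=\sum_{j=1}^n(\rho\vec v)_j^2\le n\max_j(\rho\vec v)_j^2$ --- produces an index $j$ (the one attaining the maximum, which is exactly why $j$ is allowed to depend on $\rho$ and $\vec\sigma$) with
\begin{equation*}
    |(\rho\vec v)_j|\ \ge\ \frac{\norm{\rho\vec v}}{\sqrt n}\ \ge\ \frac{(1-c)\,\norm{\vec\sigma}}{\sqrt n}.
\end{equation*}
To conclude, it remains to verify the numerical inequality $\tfrac{1-c}{\sqrt n}\ge c$, equivalently $c\le (1+\sqrt n)^{-1}$, so that the right-hand side above is at least $c\norm{\vec\sigma}$; this is the sole point where the explicit value $c=(2n)^{-1/2}$ is used, and it is a short computation. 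There is no genuine obstacle in this lemma, as it is merely a preparatory step; the only bookkeeping worth highlighting is keeping $c$ strictly below $1$ so that the triangle-inequality step is non-vacuous, and recording that the output index $j$ depends on both $\rho$ and $\vec\sigma$, exactly as the statement permits.
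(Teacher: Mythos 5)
Your argument is structurally sound and takes a genuinely (if mildly) different route from the paper's. You first convert the coordinatewise hypothesis into $\norm{\vec v-\vec\sigma}\le c\norm{\vec\sigma}$, deduce $\norm{\rho\vec v}=\norm{\vec v}\ge(1-c)\norm{\vec\sigma}$, and only then pigeonhole on the coordinates of $\rho\vec v$. The paper pigeonholes first, on $\rho\vec\sigma$, to find $j$ with $|(\rho\vec\sigma)_j|\ge 2c\norm{\vec\sigma}$, and then subtracts the coordinatewise error $|(\rho(\vec\sigma-\vec v))_j|\le\norm{\rho(\vec\sigma-\vec v)}=\norm{\vec\sigma-\vec v}\le c\norm{\vec\sigma}$. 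Both proofs use exactly the same two ingredients (orthogonal invariance of the $\ell^2$ norm and the $\ell^\infty$--$\ell^2$ comparison), just in the opposite order; yours is marginally cleaner in that the index $j$ is produced at the very last step.

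The one genuine problem is the ``short computation'' you defer at the end: it does not come out in your favour for small $n$. The inequality $(2n)^{-1/2}\le(1+\sqrt n)^{-1}$ is equivalent to $\sqrt n\ge 1+\sqrt 2$, i.e.\ to $n\ge 6$. For $n\le 5$ it fails --- e.g.\ for $n=2$ one has $c=1/2$ while $(1-c)/\sqrt n=1/(2\sqrt2)<1/2$ --- so your chain only yields $|(\rho\vec v)_j|\ge(1-c)n^{-1/2}\norm{\vec\sigma}$, which is strictly smaller than $c\norm{\vec\sigma}$, and the lemma with the stated constant is not established in those cases. The fix is simply to shrink $c$ (any $c\le(1+\sqrt n)^{-1}$ makes your argument close). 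In fairness, the paper's own proof has the same defect: its pigeonhole step needs $2c\le n^{-1/2}$, i.e.\ $c\le(4n)^{-1/2}$, which the stated $c=(2n)^{-1/2}$ also violates; and the exact value of $c$ is immaterial downstream, since the paper remarks that $c$ may be taken smaller in the dyadic decomposition. Still, you should not assert that a numerical verification is routine without carrying it out --- here it actually fails for the constant as written.
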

\begin{proof}
    Since $\norm{\rho\vec\sigma}=\norm{\vec \sigma}$, there exists some $j=j(\rho,\vec \sigma)$ such that $|(\rho \vec \sigma)_j|\ge 2c\norm{\vec \sigma}$. Also, using $|v_j-\sigma_j|\le c\sigma_j$ for each $j$, we compute that
    \begin{equation*}
        \norm{\rho(\vec \sigma-\vec v)}=\norm{\vec \sigma-\vec v}\le c\norm{\vec \sigma},
    \end{equation*}
    and in particular, for each $j$ we have
    \begin{equation*}
        |(\rho(\vec \sigma-\vec v))_j|\le c\norm{\vec \sigma}.
    \end{equation*}
    Thus by the triangle inequality, we have
    \begin{equation*}
        |(\rho \vec v)_j|\ge |(\rho \vec \sigma)_j|-|(\rho(\vec \sigma-\vec v))_j|\ge c\norm{\vec \sigma}.
    \end{equation*}
\end{proof}
We apply Lemma \ref{lem:elementary} with $\vec \sigma=(\sigma_1,\dots,\sigma_n)$, $\vec v=(\partial_1 \phi,\dots,\partial_n \phi)$ and $\rho^{-1}=\rho^T$ in place of $\rho$, which depends on $T'$ only. ($c$ can be chosen to be smaller than $(2n)^{-1/2}$ in the dyadic decomposition in Section \ref{sec:subsub_dyadic_decomposition}.) This implies Lemma \ref{lem:one_derivative_large} since $\norm{\vec\sigma}\sim \sigma$.

\underline{Step 3: Approximation and intermediate decoupling.}\smallskip

By Lemma \ref{lem:one_derivative_large}, we may assume without loss of generality that $T'$ is axis parallel and abuse notation $\tilde \phi=\phi$. For simplicity of notation, we only give the argument for the case where $\partial_n \phi\sim \sigma$ on $T'$.

Write $T'=R\times I$ where $I:=[-\eta,\eta]$, and the width $w$ of $T'$ is at least $\delta$ since the width of $T'$ is at least $\delta$. We approximate $\phi(x)$ by $\phi(x',0)$ with error $O(\eta\sigma)$. We have two cases. 

If $\eta\sigma\leq \delta$, then $\phi(x)$ can be approximated by $\phi(x',0)$ with an error of $O(\delta)$. Thus, the result follows from the same proof of the lower dimensional case in Section \ref{sec:subsub_bivariate}.

If $\eta\sigma>\delta$, we further consider two cases according to the sizes of $w$ and $\eta\sigma$, where 
\begin{equation*}
    w:=\text{width of $T'=R\times I$}.
\end{equation*}

\fbox{Case $w\ge \eta \sigma$}

We first apply a decoupling at the intermediate scale $\eta\sigma$. More precisely, by the assumption $\tilde S(n-1)$, we can decouple the set $\{x'\in R:|\phi(x',0)|\lesssim \eta\sigma\}$ into boundedly overlapping parallelograms $R'$ with width at least $\eta\delta$. Equivalently, we have decoupled $T'$ into rectangles $T''=R'\times [-\eta,\eta]$ with width at least $\eta\delta$, on each of which we have $|\phi(x)|\lesssim \eta\sigma$.

\underline{Step 4: Rescaling.}\smallskip

We need to further decouple $T''$. To this end, we define
\begin{equation*}
    \varphi(x)=c(\eta\sigma)^{-1} \phi(\lambda_{R'}(x'),\eta x_n),
\end{equation*}
which lies in $\mathcal P_{n,d}$ for a suitably small $c\sim 1$. Now recalling that $\partial_n \phi\sim \sigma$ on $T'$, we have $\varphi_n\sim 1$ on $[-1,1]^n$ by the rescaling.

\underline{Step 5: Applying Corollary \ref{cor:sublevel_set_Pzsim1}.}\smallskip

Now we apply Corollary \ref{cor:sublevel_set_Pzsim1} to $\varphi$ to decouple $\{|\varphi|< (\eta\sigma)^{-1}\delta\}$ into boundedly overlapping rectangles with width at least $(\eta\sigma)^{-1}\delta$, on each of which we have $|\varphi|\lesssim (\eta\sigma)^{-1}\delta$. Reversing the rescaling back to $T''$, we obtain a boundedly overlapping cover of $\{|\phi|<\delta\}$ by rectangles of width at least $\delta$, on each of which $|\phi|\lesssim \delta$.

\fbox{Case $w< \eta \sigma$}

This case is harder. We first note that $w$ is the width of $R'$ in this case, since $\eta\ge \eta \sigma$. We then apply an intermediate decoupling at scale $w$. More precisely, applying the fundamental theorem of algebra (more precisely, $\tilde S(1)$), we first trivially decouple the set $\{x_n:|\phi(c_R,x_n)|\le w\}$ into intervals $J$ with length $:=\eta'\ge w$, on each of which we have $|\phi(c_R,x_n)|\lesssim w$. Equivalently, we have decoupled $R\times [-\eta,\eta]$ into parallelograms of the form $R\times J$, on each of which we have $|\phi|\lesssim w$.

We then focus on one $R\times J$. Define
\begin{equation*}
    \psi(x',x_n)=w^{-1}\phi(\lambda_R x',\lambda_J x_n),
\end{equation*}
which has bounded coefficients. We then study decoupling of the sublevel set $\{x\in [-1,1]^n:|\psi(x)|<w^{-1}\delta\}$. 

We then apply the same argument as in Step 3 with the following choices:
\begin{equation*}
\begin{aligned}
    \text{New } &T' &=& \,\,[-1,1]^n\\
    \text{New } &\phi &=&\,\,\psi\\ 
    \text{New } &\sigma &=&\,\,\sigma \eta'w^{-1}\sim \psi_n\lesssim 1\\ 
    \text{New } &\delta &=&\,\,\delta w^{-1}\\
    \text{New } &w&=&\,\,1\\
    \text{New } &\eta&=&\,\,1. 
\end{aligned}    
\end{equation*}
Note that the product of the new $\eta$ and new $\sigma$ is equal to $\eta'w^{-1}\sim \psi_n\lesssim 1=\text{new }w$. So the rest of the argument in the case $w\ge \eta \sigma$ can be applied at the new scale $\delta w^{-1}$, this time giving parallelograms $\tilde R$ with width at least $\delta w^{-1}$. Lastly, rescaling back, we see that $\lambda_{R\times J}(\tilde R)$ has width at least $\delta$, since $R\times J$ has width $w$.

This closes the induction and thus finishes the proof of Theorem \ref{thm:sublevel_set_decoupling}.

\section{Decoupling for homogeneous polynomials}\label{sec:homo_highdim}
In this section, we prove Theorem \ref{thm:decoupling_homo} and Proposition \ref{prop:IZ(3)}. We will essentially follow the same outline of proof as in \cite[Section 12]{LiYang2025}, but generalised to higher dimensions.

\fbox{Notation} All implicit constants below in this section are allowed to depend on $n,d$. Unless otherwise specified, all decoupling inequalities below are $\ell^p(L^p)$ where $2\le p\le \frac{2(n+1)}{n-1}$.

\subsection{Preliminary reductions}\label{sec:reduction_homo_nonzero}
We induct on $n$. The base case $n=2$ is proved in \cite[Section 12]{LiYang2025}, since $H(3)$ holds. Let $n\ge 3$ and assume that $U(n-1)$ implies $H(n)$. We will prove that $U(n)$ implies $H(n+1)$.

Let $\phi(x'):=\phi(x,x_n)$ be homogeneous of degree $d\ge 1$, namely, $\phi(rx')=r^d \phi(x')$, $x'\in \R^{n+1}$. We may reparametrise the surface as
\begin{equation*}
    (x',\phi(x'))=(rs,r,r^d P(s)),\quad s\in \R^n,
\end{equation*}
where $P\in \mathcal P_{n,d}$. Then it suffices to prove a uniform decoupling result, with the cost of decoupling independent of the coefficients of $P$.

\subsection{A sublevel set decoupling}
We will need a higher dimensional analogue of the sublevel set decoupling in \cite[Theorem 12.1]{LiYang2025}, which decouples sublevel sets of the form
\begin{equation*}
    H_\delta:=\{(rs,r):r\in [1,2], s\in [-1,1]^n,|P(s)|<\delta\}.
\end{equation*}

\begin{thm}\label{thm:homo_sublevel_set}
Let $P\in \mathcal P_{n,d}$. For $\delta\ll_{n,d} 1$ and $\eps\in (0,1]$, there exists a boundedly overlapping cover $\mathcal S_\delta$ of $\{s\in [-1,1]^n:|P(s)|<\delta\}$ by parallelograms $S$, such that the following holds:
\begin{enumerate}    
    \item $|P|\lesssim \delta$ on each $S$.

    \item For every $2\le p\le \frac{2(n+1)}{n-1}$ and every $\eps\in (0,1]$, the set $H_\delta$ can be $\ell^p(L^p)$ decoupled into
    \begin{equation*}
        H_\delta(S):=\{(rs,r):r\in [1,2], (s_1,s_2)\in S\}
    \end{equation*}
    at cost $C_\eps \delta^{-\eps}$.
    \end{enumerate}
\end{thm}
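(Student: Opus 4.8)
The plan is to reduce Theorem~\ref{thm:homo_sublevel_set} to the flat sublevel set decoupling $S(n)$ (equivalently $\tilde S(n)$), which is available under the hypothesis $U(n-1)$ via Theorem~\ref{thm:main_U_imply_S}, together with an extra ``cylindrical'' variable $r\in[1,2]$ that behaves like a convex curve and can be handled by a standard Bourgain--Demeter cone-type argument. More precisely, first observe that the map $(s,r)\mapsto(rs,r)$ is, after a harmless partition of $[-1,1]^n$ into $O(1)$ pieces and a linear change of variables, bi-Lipschitz with bounded Jacobian, so decoupling for $H_\delta$ in the $(x',r)$ coordinates is equivalent to a decoupling statement in the $(s,r)$ coordinates for the ``graph-like'' set $\{(s,r):r\in[1,2],\,|P(s)|<\delta\}$, where the $r$-direction is uncurved and the $s$-directions carry all the degeneracy of $P$.

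First I would apply $\tilde S(n)$ (which holds by Corollary~\ref{cor:U_tilde_equivalent} and Theorem~\ref{thm:sublevel_set_decoupling}, using the hypothesis $U(n-1)$ implicit in the induction of Section~\ref{sec:reduction_homo_nonzero}) to the polynomial $P\in\mathcal P_{n,d}$ and scale $\delta$: this produces a boundedly overlapping cover $\mathcal S_\delta$ of $\{s\in[-1,1]^n:|P(s)|<\delta\}$ by parallelograms $S$ of width at least $\delta$ on each of which $|P|\lesssim\delta$, and such that $[-1,1]^n$ (hence its relevant sublevel set) decouples into the $\mathcal S_\delta$ at cost $C_\eps\delta^{-\eps}$ in $\ell^p(L^p)$ for $2\le p\le\frac{2(n+1)}{n-1}$. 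This immediately gives conclusion (1). For conclusion (2), I would lift this decoupling to $H_\delta$: the key point is that the fibration $r\in[1,2]$ is transverse to the $s$-variables and the Fourier supports of the pieces $H_\delta(S)$, after the change of variables $(x',r)\leftrightarrow(s,r)$, are essentially $S\times[1,2]$ thickened by $\delta$, so a flat (hence trivially valid, as it is an $\ell^2$ one-dimensional decoupling into a single interval of length $1$, or at worst a rescaled convex-curve decoupling) decoupling in $r$ combined with the $\ell^p$ orthogonality in $s$ yields the claim; one invokes the standard fact that introducing an extra variable along which the surface is a graph over an interval of unit length costs only a bounded factor (cf.\ the projection/cylinder arguments in \cite{LiYang2024,LiYang2025}).

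The main technical obstacle I anticipate is the change of variables $(s,r)\mapsto(rs,r)$ and bookkeeping its effect on the parallelograms and their widths: the map is affine in $s$ for each fixed $r$ but the $r$-dependence means that a product set $S\times[1,2]$ in $(s,r)$-space is sent to a skewed region in $(x',r)$-space, so one must check that $H_\delta(S)$ is still equivalent (in the sense of Section~\ref{sec:notation}) to a parallelogram with width comparable to $\delta$, and that the collection $\{H_\delta(S):S\in\mathcal S_\delta\}$ remains boundedly overlapping. This is the analogue of the verification carried out in \cite[Theorem 12.1]{LiYang2025} in two dimensions, and I would follow that argument closely: partition $r\in[1,2]$ into $O(\delta^{-\eps})$ (or even $O(1)$, after an $\eps$-loss is absorbed) subintervals on which $r$ is essentially constant, on each of which the map is genuinely affine with bounded distortion, and then reassemble using the triangle inequality / a trivial decoupling in $r$ at an acceptable cost. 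The curvature in the $r$-direction of the homogeneous cone, if needed to upgrade a trivial $\ell^p$ bound to something sharper, is supplied by Theorem~\ref{thm:Bourgain_Demeter}, but for the sublevel-set statement as stated a trivial decoupling in $r$ suffices since $r$ ranges over a unit interval and contributes only boundedly many pieces.
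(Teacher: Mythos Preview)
Your proposal has a genuine gap in the lifting step. You correctly flag the nonlinear change of variables $(s,r)\mapsto(rs,r)$ as the main obstacle, but the fix you suggest does not close it. Decoupling inequalities are invariant only under \emph{affine} maps of the Fourier variables, not under bi-Lipschitz diffeomorphisms; on an $r$-interval of length $\ell$ the map $(s,r)\mapsto(rs,r)$ differs from its best affine approximation by $(r-r_0)(s-s_0)$, which is $O(\ell)$ when $s$ ranges over $[-1,1]^n$. Thus with $\ell\sim 1$ or $\ell\sim\delta^{\eps}$ you cannot transport the flat scale-$\delta$ decoupling furnished by $\tilde S(n)$ across this map. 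Taking $\ell\sim\delta$ would make the approximation accurate, but then you have $\sim\delta^{-1}$ many $r$-slabs and no mechanism to reassemble them into the target pieces $H_\delta(S)$, which carry the full range $r\in[1,2]$; the triangle inequality in $r$ costs a power of $\delta^{-1}$, and there is no reverse inequality to undo the slicing. The cylinder/projection heuristic you invoke applies to genuine products $A\times[1,2]$, but $H_\delta$ is a \emph{cone} over $\{|P|<\delta\}$, not a cylinder, and that distinction is exactly the content of the problem.

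The paper's route is different and avoids this issue. Instead of calling the flat $S(n)$ as a black box and lifting, it reruns the induction-on-degree scheme of Section~\ref{sec:general_case} directly for the conical sets $H_\delta$, thereby reducing to the case $|\partial_n P|\sim 1$. In that case $\{|P|<\delta\}$ is the $\delta$-neighbourhood of the graph of a pseudo-polynomial $s_n=g(\bar s)$, so (after swapping two coordinates) $H_\delta$ becomes the $\delta$-neighbourhood of a cone over the \emph{graph} $\{(\bar s,g(\bar s))\}$. At this point the radial principle of \cite[Section 3]{LiYang2025} applies and reduces the cone decoupling to the decoupling of the base graph, which is Proposition~\ref{prop:pseudo} (available since $\tilde U(n)$ holds). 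The radial principle is itself a Pramanik--Seeger iteration adapted to conical dilations; it is the nontrivial ingredient that does the work your ``lift'' was supposed to do, and it is only applicable once the base has been arranged to be a graph, which is why the reduction to $|\partial_n P|\sim 1$ must come first rather than after.
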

To prove this theorem, we will follow a similar proof to that of Theorem \ref{thm:sublevel_set_decoupling}, and we just give the main ideas. 

\subsubsection{Case of implicit functions}
We first solve the case when $|\partial_n P|\sim 1$ over $[-1,1]^2$. In this case, the set $|P(s)|<\delta$ is essentially the $\delta$-neighbourhood of the graph of a pseudo-polynomial $s_n=g(\bar s):=g(s_1,\dots,s_{n-1})$, and so we just need to decouple the $\delta$-neighbourhood of
\begin{equation*}
    \{(r\bar s,rg(\bar s),r):r\in [1,2]:\bar s\in [-1,1]^{n-1}\}.
\end{equation*}
Swapping the last two coordinates and use the radial principle stated in \cite[Section 3]{LiYang2025} (or \cite[Section 2]{LiYang2024}), we can reduce the decoupling to $(t,g(t))$, which then follows from Proposition \ref{prop:pseudo}, since we have $\tilde U(n)$ (which is equivalent to $U(n)$, by Corollary \ref{cor:U_tilde_equivalent}).

\subsubsection{Induction on degree}
We may use essentially the same induction-on-degree argument as in Section \ref{sec:general_case} to reduce the general case to the case of implicit functions. This finishes the proof of Theorem \ref{thm:homo_sublevel_set}.

\subsection{Proof of Proposition \ref{prop:IZ(3)}}
By Lemma \ref{lem:zero_Hessian_paper}, we may assume $\psi(x,y)$ is of one of the following forms:
\begin{equation*}
\begin{aligned}
    (\text{Type I})\quad & \psi(x,y)=\eta(x)+\phi(y_1,y_2),\quad \phi\in \mathcal P_{2,d},\\
    (\text{Type II})\quad & \psi(x,y)=\eta(x)+A_1(y_1)+A_2(y_1)y_2+A_3(y_1)y_3,\quad A_1,A_2,A_3\in \mathcal P_{1,d}.
\end{aligned}    
\end{equation*}
The decoupling for Type I follows exactly from \cite[Proposition 13.1]{LiYang2025}. Decoupling for Type II can be proved in a similar way to that of Theorem \ref{thm:almost_affine} in the case $n=3$. We omit the details.

\subsection{Proof of decoupling for homogeneous polynomials}
Recall we have reduced to decoupling the surface
\begin{equation*}
    \{(rs,r,r^d P(s)):r\in [1,2],s\in [-1,1]^n\},
\end{equation*}
where $P\in \mathcal P_{n,d}$. By essentially the same argument in \cite[Section 12.3]{LiYang2025}, we can reduce to the decoupling to
\begin{equation*}
    \{(u,u^{\beta}t,u^{\beta}\psi(t):u\sim 1,|t|\le c\},
\end{equation*}
where $\beta=1/d$ and $\psi\in \mathcal P_{n,d}$. Applying the radial principle stated in \cite[Section 3]{LiYang2025}, this further reduces to
\begin{equation*}
    (u,t,u^\beta+\psi(t)),\quad u\sim 1,\quad  t\in [-c,c]^n.
\end{equation*}
This follows from applying the induction hypothesis and the degeneracy locating principle Theorem \ref{thm:degeneracy_locating_principle} again. For completeness, we give a little more details.

Let $\mathcal P$ consist of graphs over $[1,2]\times [-1,1]^n$ of $\phi(u,t):=\eta(u)+\psi(t)$, where $\eta$ is a smooth function with $\eta''\sim 1$ and $\psi\in \mathcal P_{n,d}$. Let $\mathcal R$ consist of all parallelograms of the form $I\times R$, where $I\sub [1,2]$ is an interval and $R\sub [-1,1]^n$ is a parallelogram. Let $\mathcal A$ consist of all affine bijections on $\R^{n+1}$ that can be factorised as $\lambda_I\times \lambda_R$, where $I\sub [1,2]$ is an interval and $R\sub [-1,1]^n$ is a parallelogram. Define $H\phi(u,t)=\det D^2 \psi(t)$.

We then need to verify the following ingredients of the degeneracy locating principle for $\mathcal P,\mathcal R,\mathcal A,H$:
\begin{itemize}
    \item $(\mathcal P,\mathcal R)$ is $\mathcal A$-rescaling invariant: they are easy to check, as they follow from a similar proof of Proposition \ref{prop:polynomial_rescaling_invariant}.
    
    \item $H$ is a degeneracy determinant: this follows from Corollary \ref{cor:hessian}.
    \item Trivial covering property: this is trivial, since every cube of side length $K^{-1}$ belongs to $\mathcal R$. 
    \item Sublevel set decoupling: this is just $S(n)$, which follows from the induction hypothesis $U(n)$.
    \item Totally degenerate decoupling: this is just the assumption $IZ(n)$ as in Definition \ref{defn:IZ(n)}.
    \item Nondegenerate decoupling: this follows from the induction hypothesis $U(n)$.
    \item Lower dimensional decoupling: The width of $I\times R$ is attained by $R$, so the lower dimensional decoupling follows from $\tilde U(n)\iff U(n)$ (we may approximate $u^{1/\beta}$ by its Taylor polynomial).
\end{itemize}

\bibliographystyle{alpha}
\bibliography{reference}

\end{document}